\newtheorem{theorem}{Theorem}
\numberwithin{theorem}{section}
\newtheorem{lemma}[theorem]{Lemma}
\newtheorem{proposition}[theorem]{Proposition}
\newtheorem{corollary}[theorem]{Corollary}
\theoremstyle{remark}
\newtheorem{example}[theorem]{Example}
\theoremstyle{remark}
\newtheorem{remark}[theorem]{Remark}
\theoremstyle{remark}
\theoremstyle{definition}
\newtheorem{definition}[theorem]{Definition}
\newcommand{\mz}{\mathbb{Z}}
\newcommand{\mr}{\mathbb{R}}
\newcommand{\mn}{\mathbb{N}}
\newcommand{\vect}{\textbf{vect}}
\newcommand\bZ{{\mathbb{Z}}}
\newcommand{\R}{\mathbf{R}}
\newcommand{\Z}{\mathbf{Z}}
\newcommand{\dgm}{[m+1]^2_<}
\newcommand{\Dgm}{\R^2_<}
\newcommand\abs[1]{\lvert#1\rvert}
\newcommand\norm[1]{\left\lVert#1\right\rVert}
\newcommand{\DGM}{\mathcal{D}}
\newcommand{\eps}{\varepsilon}
\DeclareMathOperator{\intc}{Int}
\DeclareMathOperator{\intp}{Int}
\DeclareMathOperator{\rank}{rank}
\DeclareMathOperator{\Rank}{Rank}
\DeclareMathOperator{\ptwise}{\Sigma}
\DeclareMathOperator{\pd}{PD}
\DeclareMathOperator{\op}{op}
\begin{document}
	\title{Graded Persistence Diagrams and Persistence Landscapes}
	
	\author{Leo Betthauser}
	\address{Microsoft Corporation}
	\email{lebettha@microsoft.com}
	
	\author{Peter Bubenik}
	\address{Department of Mathematics, University of Florida}
	\email{peter.bubenik@ufl.edu}
	
	\author{Parker B. Edwards}
	\address{ \parbox{\linewidth}{Department of Applied and Computational Mathematics and Statistics, \\ University of Notre Dame} }
	\email{parker.edwards@nd.edu}
	
	\subjclass[2010]{55N99}
	
	\date{\today}


\begin{abstract}
  We introduce a refinement of the persistence diagram, the graded persistence diagram. It is the M\"obius inversion of the graded rank function, which is obtained from the rank function using the unary numeral system. Both persistence diagrams and graded persistence diagrams are integer-valued functions on the Cartesian plane. Whereas the persistence diagram takes non-negative values, the graded persistence diagram takes values of $0$, $1$, or $-1$. The sum of the graded persistence diagrams is the persistence diagram. We show that the positive and negative points in the $k$-th graded persistence diagram correspond to the local maxima and minima, respectively, of the $k$-th persistence landscape. We prove a stability theorem for graded persistence diagrams: the $1$-Wasserstein distance between $k$-th graded persistence diagrams is bounded by twice the $1$-Wasserstein distance between the corresponding persistence diagrams, and this bound is attained. In the other direction, the $1$-Wasserstein distance is a lower bound for the sum of the $1$-Wasserstein distances between the $k$-th graded persistence diagrams. In fact, the $1$-Wasserstein distance for graded persistence diagrams is more discriminative than the $1$-Wasserstein distance for the corresponding persistence diagrams.
\end{abstract}
	
\maketitle	
	

	\section{Introduction}
	
	In computational settings, persistent homology produces a persistence module indexed by the ordered set $[m] = \{0,1,2,\ldots,m\}$.
	For each persistence module there is a rank function giving the ranks of the linear maps corresponding to the pairs $a \leq b$, where $a,b \in [m]$.
	The persistence diagram of such a persistence module was first defined by Cohen-Steiner, Edelsbrunner, and Harer~\cite{cseh:stability}. It is obtained from the rank function using a simple inclusion-exclusion formula, and the rank function may be recovered using summation. Patel observed that this is an example of M\"obius inversion~\cite{Patel:2018}. 
	An alternative summary of persistence modules is the persistence landscape~\cite{bubeniklandscapes}. It may be viewed as a feature map or kernel~\cite{Reininghaus:2015,Bubenik:pl-properties}, allowing methods from machine learning and statistics to be easily applied to persistence modules.
	
	Here we show that there is an elegant connection between these two approaches. The key step uses the simplest (and surely the oldest) way of representing natural numbers: the unary numeral system. We decompose the rank function into a sequence of $k$-th graded rank 
	functions, for $k \in \mn$, whose values lie in $\{0,1\}$. M\"obius inversion produces the $k$-th graded persistence diagram. Unlike the persistence diagram, whose values lie in $\mz_{\geq 0}$, its values lie in $\{-1,0,1\}$. The sum of the graded persistence diagrams is the persistence diagram (\cref{thm:consistency}), so it is a refinement of the usual construction. Furthermore, the points where the $k$-th graded persistence diagram has values of $1$ and $-1$ are the local maxima and local minima, respectively, of the $k$-th persistence landscape (\cref{thm:properties-pl}).
	Using the graded persistence diagram, we give a simple definition of the derivative of the persistence landscape (\cref{def:rhok}, \cref{prop:rhok-lambdak}). 
	
	In our development, we carefully define persistence modules, rank functions, and persistence diagrams in both the discrete and continuous cases so that the constructions are compatible (Figures \ref{fig:pm} and \ref{fig:rank}).
	
	A $1$-Wasserstein distance may be defined for persistence diagrams whose points are allowed to have negative multiplicity~\cite{be:virtual}. We follow this idea to define a $1$-Wasserstein distance for graded persistence diagrams. For $p>1$ the $p$-Wasserstein distance for graded persistence diagrams does not satisfy the triangle inequality (\cref{prop:triangle}).	We prove the following stability theorem:
	The $1$-Wasserstein distance between two $k$-th graded persistence diagrams is at most twice the $1$-Wasserstein distance between their corresponding persistence diagrams, and this upper bound is achieved (\cref{thm:stability}). To the authors' knowledge, this is the first stability result for generalized persistence diagrams with negative multiplicity.
	We also give sharp bounds for the sum of the $1$-Wasserstein distances between the $k$-th graded persistence diagrams in terms of the $1$-Wasserstein distance between the corresponding persistence diagrams (\cref{thm:bounds}).

	For two metrics $d$, $d'$ on a set $X$, say that $d$ is \emph{more discriminative} than $d'$ if $d'(x,y) \leq d(x,y)$ for all $x,y \in X$ and if there is no constant $M$ such that $d(x,y) \leq Md'(x,y)$ for all $x,y \in X$.
	For example, for the set of tame persistence modules, for $1 \leq p < q \leq \infty$ the $p$-Wasserstein distance of their persistence diagrams is more discriminative than the $q$-Wasserstein distance.
	By \cref{thm:bounds}, for the set of tame persistence modules, the $1$-Wasserstein distance of their graded persistence diagrams is more discriminative than the $1$-Wasserstein distance of their persistence diagrams.

	As a result of our theory, algorithms and software for the graded persistence diagram are already available. Indeed, the standard software for computing persistence landscapes~\cite{bubenikDlotko} stores the piecewise-linear $k$-th persistence landscape by its critical points, which is the $k$-th graded persistence diagram.
	
	\subsection*{Related work}
	
	Patel~\cite{Patel:2018} uses M\"obius inversion to define and study persistence diagrams of constructible persistence modules indexed by $\mathbb{R}$ with values in certain symmetric monoidal categories and certain abelian categories. In that latter case, he proves a stability theorem for erosion distance.
	Patel and McCleary \cite{McCleary:2018} strengthen this to a bottleneck-distance stability theorem.
	More recently, they study persistence modules indexed by $\mathbb{R}^n$ and prove bottleneck stability under the assumption that all of the elements in the persistence diagram are positive~\cite{McCleary:2019b}.
	Puuska \cite{Puuska:2017} has generalized Patel's erosion stability result to the setting of generalized persistence modules~\cite{bubenik2015metrics}.
	Memoli and Kim \cite{Kim:2019} define a notion of rank invariant for persistence modules indexed by a poset with values in certain symmetric monoidal categories. When the posets are essentially finite, they use this rank invariant to define persistence diagrams which they use to study zigzag persistence and Reeb graphs.
	Vipond \cite{Vipond:2018} generalizes persistence landscapes~\cite{bubeniklandscapes} to define and study persistence landscapes for persistence modules indexed by $\mathbb{R}^n$. We note that in the previous two cases \cite{Kim:2019,Vipond:2018} the persistence diagrams obtained by M\"obius inversion may have negative terms like the graded persistence diagrams studied here.
	Inspired by these persistence diagrams with negative terms, Bubenik and Elchesen \cite{be:virtual} have undertaken a more systematic study of such diagrams.

	\subsubsection*{Outline of the paper}
	
	In \cref{sec:background} we provide background on persistence modules, the rank function, persistence landscapes and M\"obius inversion, including a careful construction of compatible discrete and continuous persistence modules and rank functions. In \cref{sec:intervals-pd} we show how to apply M\"obius inversion to the rank function on half-open intervals to obtain a persistence diagram.
	In \cref{sec:gpd} we define the graded rank functions and apply M\"obius inversion to obtain the graded persistence diagrams. Compatibility with the usual approach is given in our Consistency Theorem (\cref{thm:consistency}).
	Using the graded rank function, we define and characterize the persistence landscape (\cref{def:pl} and \cref{thm:properties-pl}). We also give a simple definition of the derivative of the persistence landscape in terms of the graded persistence diagram (\cref{def:rhok} and \cref{prop:rhok-lambdak}).
	In \cref{sec:wasserstein} we define $1$-Wasserstein distance for graded persistence diagrams (\cref{def:graded-wasserstein}) and use it to prove a stability theorem for graded persistence diagrams (\cref{thm:stability})
	and to give sharp bounds for the sum of the $1$-Wasserstein distances between the $k$-th graded persistence diagrams (\cref{thm:bounds}).

	\section{Background} \label{sec:background}
	
	In this section we introduce the background necessary for the subsequent sections. In particular, we introduce persistence modules, the rank function, persistence landscapes and M\"obius inversion. \cref{sec:discrete-cont} discusses persistence modules indexed by a real parameter obtained from persistence modules indexed by a finite set.
	
	\subsection{Partially ordered sets, intervals, and categories}
	
	A partially ordered set or poset $(P,\leq)$ is a set $P$ with a reflexive, transitive, and antisymmetric relation $\leq$.
	This poset will usually be denoted by $P$.
	A morphism of posets $f: P \to Q$ is an order-preserving map.
	We may also think of a poset $P$ as a category
	with objects the elements of $P$ and arrows $a\to b$ if and only if $a\leq b$.
	We may also think of a poset map $f:P \to Q$ as a functor between the corresponding categories.
	Let $P^{\op}$ denote the underlying set of $P$ together with the opposite order. That is $a \leq b$ in $P^{\op}$ if and only $b \leq a$ in $P$.
	An order-reversing map is a poset map $f: P^{\op} \to Q$.

	\begin{definition} 
		For $a\leq b$ in a poset $(P,\leq)$, the \emph{interval} $[a,b]$ is the set $\{ z \in P \mid a \leq z \leq b \}$. Denote the set of intervals in $P$ by $\intp(P)$.   
		Note that all intervals are nonempty by definition and that for each $a \in P$ there is an interval $[a,a]$ which contains only the element $a$.
		The set $\intp(P)$ is a poset with the partial order $\subset$ given by subset containment. That is, $[a,b] \subset [a',b']$ holds if and only if $a' \leq a \leq b \leq b'$ does. Given $f:\intc(P) \to Q$, for brevity we write $f([a,b])$ as $f[a,b]$.
	\end{definition}
	
	\begin{example} Consider the posets $[m] := \{ 0 < \dots < m \}$ and
		$\mathbf{R} = (\mr,\leq)$ and their corresponding posets of intervals $\intp([m])$ and $\intp(\mathbf{R})$.
	\end{example}

	\subsection{Persistence modules and rank functions} \label{sec:pm-rank}
	
	Let $K$ be a field and let $P$ be a sub-poset of $\R$. 
	A \emph{persistence module} $M$ with indexing poset $P$ assigns a finite dimensional vector space over $K$, $M(x)$, to every element $x\in P$ and a $K$-linear map $M(x \leq y):M(x)\to M(y)$ to every pair $x\leq y$ in $P$. The maps $M(x\leq y)$ for $x\leq y$ in $P$ satisfy $M(x\leq x) = 1_{M(x)}$ and $M(x\leq y) = M(z\leq y) \circ M(x\leq z)$ for all $z$ with $x \leq z \leq y$. Equivalently, $M$ is a functor $M:{P}\to\vect_{K}$, where $\vect_{K}$ denotes the category of finite dimensional $K$-vector spaces and $K$-linear maps. Persistence modules, particularly with indexing posets $\R$ and $[m]$, are central objects of study in persistent homology.

	\begin{definition} \label{def:rank}
		The \emph{rank function} of a persistence module $M$ with indexing poset $P$ is the function $\rank(M):\intc(P)\to\mz$ given by $\rank(M)([a,b]) = \rank(M(a \leq b))$.
		We will often omit $M$ and only write $\rank$.
	\end{definition} 
	
	The following theorem follows from the classification of persistence modules, which follows from the classification of graded modules over a graded PID \cite{zomorodianCarlsson:computingPH} or from Gabriel's classification of finite type quiver representations \cite{Gabriel1972}.
	
	\begin{theorem}
		Persistence modules $M$ and $N$ with indexing poset $[m]$ are naturally isomorphic if and only if $\rank(M) = \rank(N)$. \label{thm:gabriel}
	\end{theorem}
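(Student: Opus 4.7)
The plan is to reduce the statement to the interval decomposition theorem cited immediately before it. By that theorem, every persistence module indexed by $[m]$ decomposes (uniquely up to reordering) as a finite direct sum of interval modules
\[ M \cong \bigoplus_{[a,b] \in \intp([m])} K[a,b]^{\mu_M[a,b]}, \]
where $K[a,b]$ is the persistence module with $K$ at every position in $[a,b]$, identity structure maps there, and $0$ elsewhere, and $\mu_M[a,b] \in \mz_{\geq 0}$ is its multiplicity in $M$. Two such modules are naturally isomorphic if and only if their multiplicity functions agree, so it suffices to prove that $\rank(M)$ and $\mu_M$ determine one another.

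The forward implication is immediate: a natural isomorphism $M \Rightarrow N$ consists, for each $x \in [m]$, of a $K$-linear isomorphism commuting with the structure maps, so $\rank(M(a \leq b)) = \rank(N(a \leq b))$ for all $a \leq b$.

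For the converse, I would observe that the structure map of $K[a,b]$ from index $c$ to index $d$ is the identity when $[c,d] \subseteq [a,b]$ and zero otherwise, so
\[ \rank(M)[c,d] = \sum_{\substack{[a,b] \in \intp([m]) \\ [c,d] \subseteq [a,b]}} \mu_M[a,b] = \sum_{\substack{0 \leq a \leq c \\ d \leq b \leq m}} \mu_M[a,b]. \]
This relation is invertible by inclusion-exclusion on the finite poset $\intp([m])$: with the convention that $\rank(M)[c,d] = 0$ whenever $c < 0$ or $d > m$, one recovers
\[ \mu_M[a,b] = \rank(M)[a,b] - \rank(M)[a-1,b] - \rank(M)[a,b+1] + \rank(M)[a-1,b+1]. \]
Hence $\rank(M) = \rank(N)$ forces $\mu_M = \mu_N$, and the two modules are isomorphic.

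The only serious ingredient is the decomposition theorem itself, which is black-boxed via the cited classification; this is the sole genuine obstacle. Everything after that reduces to combinatorial bookkeeping, and indeed the M\"obius inversion written above is a concrete instance of the general M\"obius-inversion machinery that the paper will develop in subsequent sections to define persistence diagrams.
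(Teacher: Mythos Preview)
Your proof is correct. The paper itself does not give an explicit proof of this theorem: it simply asserts that the result follows from the cited classification theorems and moves on. Your argument makes this explicit by black-boxing the interval decomposition and then showing that the multiplicities $\mu_M$ are recoverable from $\rank(M)$ via inclusion-exclusion. This is exactly the right approach, and indeed the inversion formula you write down is precisely the M\"obius inversion on $\intp([m])$ that the paper develops in \cref{sec:intervals-pd} (compare your formula with \cref{cor:inversion} after the change of variables $\Rank[x,y) = \rank[x,y-1]$). So you have anticipated the paper's later machinery and used it to fill in a step the paper leaves to the reader.
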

	
	\begin{lemma}\label{lem:rank_order} 
		For any persistence module $M$ with indexing poset $P$, the rank function $\rank(M):\intc(P) \to(\mz,\leq)$ is an order-reversing function, where $\leq$ is the standard order on $\mz$. 
	\end{lemma}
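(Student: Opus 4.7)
The plan is to unpack the two definitions and then exploit the basic fact that the rank of a composition of linear maps is bounded above by the rank of any of its factors. An order-reversing map $\intc(P)^{\op} \to (\mz,\leq)$ means: whenever $[a,b] \subset [a',b']$ in $\intp(P)$, we must show $\rank(M)[a,b] \geq \rank(M)[a',b']$.

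First I would translate the hypothesis $[a,b] \subset [a',b']$ back into the inequality $a' \leq a \leq b \leq b'$ recorded in the definition of $\intp(P)$. From this chain of inequalities and the functoriality axiom for $M$, I would factor the structure map
\[
M(a' \leq b') \;=\; M(b \leq b') \circ M(a \leq b) \circ M(a' \leq a),
\]
which is the identity coming from the composition law $M(x\leq y) = M(z \leq y) \circ M(x \leq z)$ applied twice.

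Next I would invoke the standard linear-algebra fact that for composable $K$-linear maps $f$ and $g$ one has $\rank(g \circ f) \leq \min(\rank f, \rank g)$; this is where the finite-dimensional hypothesis on $M(x)$ is used to make the rank well defined. Applying this to the factorization above yields
\[
\rank\bigl(M(a' \leq b')\bigr) \;\leq\; \rank\bigl(M(a \leq b)\bigr),
\]
which by \cref{def:rank} is precisely $\rank(M)[a',b'] \leq \rank(M)[a,b]$, the required order-reversing inequality.

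There is essentially no obstacle here; the only thing to be careful about is bookkeeping the direction of the inequality (subset containment of intervals versus the order on $\mz$) and correctly writing the map $M(a' \leq b')$ as a composition through $a$ and $b$, which requires both endpoints of $[a,b]$ to lie inside $[a',b']$.
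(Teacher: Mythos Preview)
Your proof is correct and is essentially identical to the paper's own argument: both factor the structure map on the larger interval through the structure map on the smaller interval via functoriality, then apply the fact that rank does not increase under composition. The only difference is cosmetic notation (the paper writes $[x',y'] \subset [x,y]$ rather than your $[a,b] \subset [a',b']$).
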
 
	\begin{proof} 
		If $[x',y']$ and  $[x,y]$ are intervals in $\intc(P)$ with $[x',y'] \subset [x,y]$ then the following diagram commutes: 
		\[
		\begin{tikzcd}  
		M(x) \arrow[bend right=20,swap]{rrr}{M(x \leq y)} \arrow{r}{M(x\leq x')}&
		M(x') \arrow{r}{M(x' \leq y')} &
		M(y') \arrow{r}{M(y' \leq y)} & 
		M(y) \text{.}
		\end{tikzcd}
		\]
		Since $M(x\leq y)$ factors through $M(x' \leq y')$, it follows that $\rank(M(x\leq y))$ is at most $\rank(M(x' \leq y'))$. \end{proof} 
	
	Let $\Z_+$ denote the poset $(\mz_{\geq 0}, \leq)$. Then \cref{lem:rank_order} says that we have a poset morphism (i.e. an order-preserving map)
	$\rank(M): \intc(P)^{\op} \to \Z_+$.

	\subsection{Discrete and continuous persistence modules}
	\label{sec:discrete-cont}
	
	For computations, we are primarily interested in persistence modules indexed by $[m]$ for some $m \in \mn$. For applications, the underlying parameter is often continuous and we are interested in persistence modules indexed by $\R$.

	We will assume that our object of study is a persistence module $M$ indexed by $\R$ but that we have only finitely many observations and that these completely determine the persistence module.
	That is, $M$ is completely determined (up to isomorphism) by the vector spaces $M(a_i)$  and linear maps $M(a_i \leq a_j)$ for finitely many parameter values $a_0,\dots,a_m$ with $a_0 < a_1< \cdots <a_m$. Such persistence modules are sometimes referred to as \emph{tame}, \emph{finite type}, or \emph{constructible}.
	Specifically, we assume that there exist $m \in \mn$ and $a_0,\ldots,a_{m+1} \in \mathbb{R}$ such that $a_0 < a_1 < \cdots < a_m < a_{m+1}$ and that for all $i \in [m]$ and $a,b \in [a_i,a_{i+1})$ with $a \leq b$, the map $M(a \leq b)$ is an isomorphism and that $M(a) = 0$ for $a < a_0$ and for $a \geq a_{m+1}$.\footnote{We consider $a_{m+1}$ as a parameter value for which the experiment was terminated and lacking additional information we conservatively assume that nothing persists beyond this value. If desired, this value may be taken to be $\infty$.}
	For example, such persistence modules may arise from the homology of sublevel sets of a Morse function on a compact manifold.
	All persistence modules of this form arise from the following construction.
	
	Let $M$ be a persistence module indexed by $[m]$.
	Extend this to a persistence module $\hat{M}$ indexed by $[m+1]$ by defining $\hat{M}(m+1) = 0$.
	Let $\iota:[m+1] \to \R$ be an injective order-preserving map.
	For example, $\iota(j) = j$ for all $j \in [m+1]$.
	Then $\hat{M}$
	extends uniquely (up to isomorphism) to a persistence module $\overline{M}$ on $\R$ with $\overline{M}(\iota(j)) = \hat{M}(j)$ for $j \in [m+1]$ and $\overline{M}$ satisfies our assumption.
	See \cref{fig:pm}.
	In categorical language, $\hat{M}$ is the right Kan extension of $M$ along the inclusion map, and $\overline{M}$ is the left Kan extension of $\hat{M}$ along $\iota$.
	
	\begin{figure}
		\[
		\begin{tikzcd}
		{[m]} \arrow[r,"M"] \arrow[d] & \vect_{K}\\
		{[m+1]} \arrow[ur,"\hat{M}",dashed] \arrow[d,"\iota"',hook] \\
		\R \arrow[uur,"\overline{M}"',dashed]
		\end{tikzcd}
		\]
		\caption{Given persistence module indexed by $[m]$ and an injective map $\iota: [m+1] \to \R$ we have canonical extensions to persistence modules indexed by $[m+1]$ and $\R$. }
		\label{fig:pm}
	\end{figure}
	
	\subsection{Persistence landscapes} \label{sec:pl-background}
	
	Persistence landscapes were introduced for persistence modules with indexing poset $\R$~\cite{bubeniklandscapes}. Given such a module $\overline{M}$, its \emph{persistence landscape} is the function $\lambda:\mn\times\mr \to \mr$ given by 
	\[
	\lambda(k,t) = \sup\{z > 0 \mid \rank(\overline{M})([t-z,t+z]) \geq k \}\text{,}
	\]
	where $\lambda(k,t)=0$ if the set is empty.
	
	Each function $\lambda_k = \lambda(k,-):\mr\to\mr$ is a continuous piecewise-linear function with pieces of slope $+1$, $-1$, and $0$. In computational settings, each $\lambda_k$ has finitely many \emph{critical points} where the slope of the function changes, and there are finitely many $k$ for which $\lambda_k$ is not identically equal to zero. Computing and encoding a persistence landscape can be accomplished by identifying and storing the critical points of each $\lambda_k$~\cite{bubenikDlotko}.
	Additional properties of the persistence landscape may be found in subsequent papers~\cite{Bubenik:pl-properties,Chazal:2015c,Chazal:2015b}.

	\subsection{Incidence algebras and M\"obius inversion}
	
	In this section we recall some of the basic theory of M\"obius inversion for posets, which was initiated by Rota~\cite{rota1964foundations} and is an important part of enumerative combinatorics~\cite{StanleyEC}. This theory applies to posets that are \emph{locally finite}. A poset $P$ is \emph{locally finite} if for all pairs $x\leq y$ in $P$, the set $[x,y] = \{ p \mid x\leq p \leq y \}$ is finite.	The poset $[m]$ is locally finite, but $(\mr,\leq)$ is not. Fix a commutative ring $R$ with unit $1$ and a locally finite poset $(P,\leq)$. 
	
	\begin{definition}  \label{def:convolution}
		The \emph{convolution} operator is the following binary operator $*$ on the set of functions $\intc(P)\to R$.  For $f,g:\intc(P)\to R$ and interval $[x,y] \in \intc(P)$, 
		\[
		(f*g)[x,y] = \sum_{c \in [x,y]} f[x,c]g[c,y] \text{.}
		\]
		The \emph{incidence algebra} on $P$ consists of functions $\intc(P)\to R$ together with the convolution operator.
	\end{definition} 
	
	If $P$ has a largest element $\omega$, then for any function $h:P\to R$, identify $h$ with the function $h:\intc(P)\to R$ given by
	\[ 
	h[x,y] = \begin{cases} 
	h(x) & \text{ if } y = \omega \\ 
	0 & \text{ otherwise }
	\end{cases}
	\] 
	for all $x \leq y \in P$. Under this identification we have for $h:P\to R$ and $f:\intc(P) \to R$ that $f*h:P\to R$ is defined by
	\begin{equation} \label{eq:convolution}
		(f*h)(x) = (f*h)[x,\omega] = \sum_{x' \in [x,\omega]} f[x,x']h[x',\omega] = \sum_{x \leq x'} f[x,x']h(x') \text{.}
	\end{equation}

	The incidence algebra on $P$ contains the following three distinguished elements.
	
	\begin{definition} \label{def:functions}
		For any poset $P$ and commutative ring $R$ with unit $1$, define the following three functions $\intc(P)\to R$: 
		\begin{itemize}
			\item The \emph{zeta function} $\zeta_P:\intc(P)\to R$ has $\zeta_P(I) = 1$ for all $I\in \intp(P)$. 
			\item The \emph{delta function} $\delta_P:\intc(P)\to R$ has $\delta_P(I) = 1$ for all $I$ of the form $[x,x] \in \intc(P)$, and $\delta_P(I) = 0$ if $I$ is not of this form. 
			\item The \emph{M\"{o}bius function} $\mu_P:\intc(P)\to R$ is defined recursively as follows. For all $x\in P$, $\mu_P[x,x] = 1$, and for any $x < y$ define $\mu_P[x,y] = -\sum_{x \leq y' < y} \mu_P[x,y']$. 
		\end{itemize}
		We will drop the subscript $P$ from the functions above when the poset is clear from the context.
	\end{definition} 
	
	\begin{example}\label{ex:mob_inversion}
		Consider the partially ordered set $[m]$. For any $x\in [m]$ we can calculate from the above definition of $\mu_{[m]}$ that: 
		\[
		\mu[x,y] =  \begin{cases} 
		1 & \text{if } y = x \\
		-1 & \text{if } y = x+1 \\
		0 & \text{otherwise.}
		\end{cases}
		\]
		The calculation begins by noting $\mu[x,x] = 1$. Assume $x+1 \in [m]$. Then $\mu[x,x+1] = -\mu[x,x] = -1$ because $x$ is the only element of $[m]$ that is less than $x+1$ and greater than or equal to $x$. Subsequently note that if $x+2 \in [m]$ then $\mu[x,x+2] = -(\mu[x,x] + \mu[x,x+1]) = 0$, and similarly $\mu[x,y] = 0$ for all $y \geq x+2$ by induction. 
		
	\end{example}

	\begin{proposition}[{\cite[Chapter 3]{StanleyEC}}] \label{prop:identities} Consider the incidence algebra of a locally finite poset $P$ and commutative ring with identity $R$.
		\begin{enumerate}
			\item Convolution is associative.
			\item $\delta_P$ is an identity for convolution. That is $f*\delta_P = f = \delta_P*f$ holds for $f:\intc(P) \to R$. As a special case, $\delta_P*h = h$ for any $h:P\to R$.  
			\item The functions $\zeta_P$ and $\mu_P$ are inverses under convolution. That is, $\zeta_P * \mu_P = \delta_P = \mu_P * \zeta_P$. 
		\end{enumerate}
	\end{proposition}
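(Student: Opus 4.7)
The plan is to verify each of the three incidence-algebra identities by direct computation, exactly as in Stanley, exploiting local finiteness so that all of the sums in sight are finite.

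For part (1), I would expand both $((f*g)*h)[x,y]$ and $(f*(g*h))[x,y]$ using \cref{def:convolution}. The first becomes $\sum_{c'\in[x,y]} (f*g)[x,c']\, h[c',y] = \sum_{c'\in[x,y]} \sum_{c\in[x,c']} f[x,c]\,g[c,c']\,h[c',y]$, and the second becomes $\sum_{c\in[x,y]} f[x,c]\,(g*h)[c,y] = \sum_{c\in[x,y]} \sum_{c'\in[c,y]} f[x,c]\,g[c,c']\,h[c',y]$. Because $P$ is locally finite, the interval $[x,y]$ is finite, and reindexing both iterated sums as the single sum $\sum_{x\leq c\leq c'\leq y} f[x,c]\,g[c,c']\,h[c',y]$ gives the result. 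The only subtlety is checking that the range of summation in both expansions is the same set of ordered pairs $(c,c')$ with $x\leq c\leq c'\leq y$; this is immediate from transitivity.

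For part (2), I would simply unfold $(f*\delta_P)[x,y] = \sum_{c\in[x,y]} f[x,c]\,\delta_P[c,y]$ and note that $\delta_P[c,y]=0$ unless $c=y$, in which case it equals $1$, so the sum collapses to $f[x,y]$. The identity $(\delta_P*f)[x,y]=f[x,y]$ is symmetric, using $\delta_P[x,c]=0$ unless $c=x$. The case $\delta_P*h = h$ for $h:P\to R$ then follows by the identification of $h$ with a function on $\intc(P)$ as in \eqref{eq:convolution}.

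For part (3), I would prove $\mu_P * \zeta_P = \delta_P$ by induction on the (finite) cardinality of $[x,y]$. The base case $x=y$ gives $(\mu_P*\zeta_P)[x,x] = \mu_P[x,x]\,\zeta_P[x,x] = 1 = \delta_P[x,x]$. For $x<y$, the recursive definition of $\mu_P$ in \cref{def:functions} rearranges to $\sum_{x\leq c\leq y} \mu_P[x,c] = 0$, which is exactly $(\mu_P * \zeta_P)[x,y] = 0 = \delta_P[x,y]$. The identity $\zeta_P*\mu_P = \delta_P$ then follows formally from associativity and the identity property: if $\mu_P$ is a right inverse of $\zeta_P$ in the incidence algebra, and the incidence algebra is associative with two-sided identity $\delta_P$, then any right inverse is automatically a two-sided inverse. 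Alternatively, one can repeat the induction argument using the symmetric recursion $\mu_P[x,y] = -\sum_{x < x' \leq y} \mu_P[x',y]$ (which can be derived as a companion to the one given).

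I expect no serious obstacle: every step is a finite combinatorial manipulation that local finiteness makes rigorous. The one place to be careful is part (3), where the two-sided inverse statement is not quite symmetric in the definition (the recursion in \cref{def:functions} fixes the left endpoint), so the cleanest path is to prove one side by induction and then deduce the other from associativity rather than redoing the induction.
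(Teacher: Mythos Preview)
The paper does not give its own proof of this proposition; it simply cites Stanley. Your proposal is the standard argument one finds there, and parts (1) and (2) are correct as written.

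In part (3), your computation of $\mu_P * \zeta_P = \delta_P$ is fine, but the sentence ``any right inverse is automatically a two-sided inverse'' is not true in an arbitrary associative monoid with identity, so it cannot be invoked as a general principle. What \emph{is} true is that if an element has both a left inverse and a right inverse then they coincide; your alternative argument supplies exactly the missing ingredient. Defining $\nu_P$ by the dual recursion $\nu_P[x,x]=1$, $\nu_P[x,y] = -\sum_{x<x'\leq y}\nu_P[x',y]$ gives $\zeta_P*\nu_P=\delta_P$, and then associativity forces $\mu_P = \mu_P * (\zeta_P * \nu_P) = (\mu_P*\zeta_P)*\nu_P = \nu_P$, so $\mu_P$ is a two-sided inverse. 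I would drop the incorrect general claim and lead with this argument instead.
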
 
	
	\begin{example}
		Let $k \in [m]$.
		Consider the function $h:[m]\to\mz$ given by $h(i)=1$ if $i\leq k$ and $h(i)=0$ if $i > k$.
		From \eqref{eq:convolution} and \cref{ex:mob_inversion}, we have that for any $x \in [m]$
		\begin{align*} 
			(\mu*h)(x)
			& = \sum_{x \leq x' \leq m} \mu[x,x']h(x') \\
			& = h(x) - h(x+1) \quad \text{(where $h(m+1)=0$)}\\
			& =
			\begin{cases}
				1 & \text{if } x=k\\
				0 & \text{otherwise.}
			\end{cases}
		\end{align*} 
		Let $g = \mu*h$. For any $x\in [m]$,
		\begin{equation*} 
			(\zeta*g)(x)
			= \sum_{x \leq x' \leq m} \zeta[x,x']g(x') 
			= \sum_{x \leq x' \leq m} g(x') 
			= h(x).
		\end{equation*} 	
	\end{example}

	\section{Half-open intervals and persistence diagrams}
	\label{sec:intervals-pd}
	
	Consider a persistence module $M$ indexed by $[m]$ and an injective poset map $\iota:[m+1] \to \R$. Then we have a corresponding persistence module $\hat{M}$ indexed by $[m+1]$ and a persistence module $\overline{M}$ indexed by $\R$ as defined in \cref{sec:discrete-cont}. In this section we define compatible rank functions and persistence diagrams for $\hat{M}$ and $\overline{M}$ using half-open intervals.
	
	\subsection{Half-open intervals}
	\label{sec:half-open}
	
	In this section we define and discuss half-open intervals. For persistence modules indexed by $\R$ or by $[m]$, the support of a persistent homology class that is born at $a$ and that dies at $b$ is the half-open interval $[a,b)$.
	
	Let $P$ be a poset. For $a<b \in P$ define the \emph{half-open interval} $[a,b)$ to be the sub-poset of $P$ given by $\{c \in P \mid a \leq c < b\}$.
	Then the collection $\{[a,b) \mid a < b \in P\}$ is a poset with partial order given by inclusion.
	Call this the \emph{poset of half-open intervals} in $P$. The product poset $P^{\op} \times P$ consists of ordered pairs $(a,b)$ with $a,b \in P$ and the relation $(a,b) \leq (a',b')$ holds if and only if both $a' \leq a$ and $b \leq b'$ hold in $P$.
	Then the poset of half-open intervals may be identified with the sup-poset of $P^{\op} \times P$ given by $\{(a,b) \mid a < b\}$ under the mapping $[a,b) \mapsto (a,b)$.
	We denote the poset of half-open intervals in $P$ by $P^2_<$.
	\begin{example}
		For example, we have the posets of half-open intervals $\dgm$ and $\Dgm$.
	\end{example}
	Given $f: P^2_< \to Q$ and $[a,b) \in P^2_<$, for brevity we will write $f[a,b)$ for $f([a,b))$.
	Given an injective poset map $\iota:[m+1] \to \R$, there is a corresponding poset map $(\iota,\iota): \dgm \to \Dgm$.
	
	\subsection{Rank functions on half-open intervals}
	\label{sec:rank-half-open}
	
	Let $P$ be a sub-poset of $\R$ and
	let $M$ be a persistence module indexed by $P$.
	Recall from \cref{sec:pm-rank} that we have the rank function $\rank(M): \intc(P)^{\op} \to \Z_+$.
	Consider the half-open interval $[a,b) \in P^2_<$ and the function $\rank(M)[a,-]: [a,b)^{\op} \to \Z_+$.
	Define $\Rank(M):(P^2_<)^{\op} \to \Z_+$ by $\Rank(M)[a,b) = \lim_{c \to b^-} \rank(M)[a,c] = \min_{a \leq c < b} \rank(M)[a,c]$.
	
	\begin{example}
		Consider a persistence module $M$ indexed by $[m]$ and let $\hat{M}$ be the corresponding persistence module indexed by $[m+1]$.
		For $[i,j) \in \dgm$, $\Rank(\hat{M})[i,j) = \rank(\hat{M})[i,j-1] = \rank(M)[i,j-1]$.
		Thus we will sometimes write $\Rank(M)$ for $\Rank(\hat{M})$.
		That is, for a persistence module $M$ indexed by $[m]$ we have the poset map $\Rank(M): (\dgm)^{\op} \to \Z_+$, given by $\Rank(M)[a,b) = \rank(M)[a,b-1]$. For a persistence module $M$ indexed by $\R$, the equality $\Rank(M)[a,b) = \lim_{c \to b^-} \rank(M)[a,c] = \min_{a \leq c < b} \rank(M)[a,c]$ follows.
	\end{example}
	
	Consider a persistence module $M$ indexed by $[m]$ and an injective map $\iota: [m+1] \to \R$. Let $\hat{M}$ and $\overline{M}$ be the corresponding persistence modules indexed by $[m+1]$ and $\R$, respectively.
	
	\begin{lemma} \label{lem:Rank}
		Let $[a,b) \in \Dgm$.
		Then $\Rank(\overline{M})[a,b) = \Rank(\hat{M})[i,j) = \rank(M)[i,j-1]$, where $i$ is the largest element of $[m+1]$ such that $\iota(i) \leq a$ and $j$ is the smallest element of $[m+1]$ such that $\iota(j) \geq b$. The value of $\Rank(\overline{M})[a,b)$ is $0$ if there are no such elements.
	\end{lemma}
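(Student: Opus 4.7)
The plan is to reduce the statement about $\overline{M}$ to the one for $\hat{M}$ (and then to $M$), exploiting the assumption that $\overline{M}$ is constant up to isomorphism on each half-open slab $[\iota(k),\iota(k+1))$ and zero outside $[\iota(0),\iota(m+1))$, and then invoking the formula from the preceding example.

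First I would dispatch the degenerate cases. If there is no $i\in[m+1]$ with $\iota(i)\le a$, then $a<\iota(0)$, so $\overline{M}(a)=0$ and hence $\rank(\overline{M})[a,c]=0$ for every $c\ge a$; thus $\Rank(\overline{M})[a,b)=0$. If there is no $j\in[m+1]$ with $\iota(j)\ge b$, then $b>\iota(m+1)$; choosing any $c\in[\max(a,\iota(m+1)),b)$ forces $\overline{M}(c)=0$, so the minimum defining $\Rank(\overline{M})[a,b)$ is again $0$.

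In the main case, both $i$ and $j$ exist and strict order-preservation of $\iota$ together with $a<b$ gives $j>i$. The key step is a constancy-of-rank observation: for $c\in[a,b)$ and the unique $k$ with $\iota(k)\le c<\iota(k+1)$, one has
\[
\rank(\overline{M})[a,c] \;=\; \rank(\overline{M})[\iota(i),\iota(k)].
\]
This is obtained by two applications of the tameness assumption. From $\iota(i)\le a\le c$ and the factorization $\overline{M}(\iota(i)\le c)=\overline{M}(a\le c)\circ\overline{M}(\iota(i)\le a)$, the map $\overline{M}(\iota(i)\le a)$ is an isomorphism (both endpoints lying in $[\iota(i),\iota(i+1))$), so $\rank(\overline{M})[a,c]=\rank(\overline{M})[\iota(i),c]$. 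Similarly, from $\overline{M}(\iota(i)\le c)=\overline{M}(\iota(k)\le c)\circ\overline{M}(\iota(i)\le\iota(k))$ with $\overline{M}(\iota(k)\le c)$ an isomorphism, $\rank(\overline{M})[\iota(i),c]=\rank(\overline{M})[\iota(i),\iota(k)]$.

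As $c$ ranges over $[a,b)$, the block index $k$ ranges over $\{i,i+1,\ldots,j-1\}$, so
\[
\Rank(\overline{M})[a,b) \;=\; \min_{i\le k\le j-1}\rank(\overline{M})[\iota(i),\iota(k)],
\]
and by \cref{lem:rank_order} this minimum is attained at $k=j-1$. Using $\overline{M}\circ\iota=\hat{M}$ we rewrite the right-hand side as $\rank(\hat{M})[i,j-1]$, and since $\hat{M}$ agrees with $M$ on $[m]$ and $j-1\le m$, this equals $\rank(M)[i,j-1]$. The preceding example identifies this with $\Rank(\hat{M})[i,j)$, completing the chain. The main obstacle is simply bookkeeping the two isomorphism factorizations cleanly and ensuring the boundary indices line up in the degenerate cases; once that is done, the computation is essentially forced by the tameness assumption and \cref{lem:rank_order}.
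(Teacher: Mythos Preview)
Your proof is correct and follows essentially the same approach as the paper: reduce $\Rank(\overline{M})[a,b)$ to a discrete rank via the constancy of $\overline{M}$ on each slab $[\iota(k),\iota(k+1))$, then identify the resulting index with $j-1$. The paper's proof is a terse two-sentence sketch that simply asserts $\lim_{c\to b^-}\rank(\overline{M})[a,c]=\rank(\hat{M})[i,k]$ for $k$ the largest index with $\iota(k)<b$; you have supplied the isomorphism factorizations and the monotonicity argument that justify this step, which the paper leaves implicit.
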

	
	\begin{proof}
		Observe that $\Rank(\overline{M})[a,b) = \lim_{c \to b^-} \rank(\overline{M})[a,c] = \rank(\hat{M})[i,k]$, where $i$ is the largest element of $[m+1]$ such that $\iota(i) \leq a$ and $k$ is the largest element of $[m+1]$ such that $\iota(k) < b$ and that $\Rank(\overline{M})[a,b)$ is $0$ if there are no such elements.
		In the first case, $\rank(\hat{M})[i,k]=\Rank(\hat{M})[i,j)$ where $j$ is the smallest element of $[m+1]$ such that $\iota(j) \geq b$. Note that there is no such element if $k = m+1$, but in this case $\rank(\hat{M})[i,k]=0$.
	\end{proof}
	
	\begin{figure}
		\begin{equation*}
			\begin{tikzcd}
				(\dgm)^{\op} \ar[rr,"\Rank(\hat{M})"] \ar[d,"{(\iota,\iota)}"',hook] 
				& & \Z_{+} \\
				(\Dgm)^{\op} \ar[urr,"\Rank(\overline{M})"']      
			\end{tikzcd}
		\end{equation*}
		\caption{The rank functions on half-open intervals associated to a persistence module indexed by $[m]$ and an injective map $\iota:[m+1] \to \R$. The rank function $\Rank(\overline{M})$ is a canonical extension of the rank function $\Rank(\hat{M})$.}
		\label{fig:rank}
	\end{figure}
	
	See Figure~\ref{fig:rank}. In categorical language, \cref{lem:Rank} says that $\Rank(\overline{M})$ is the left Kan extension of $\Rank(\hat{M})$ along $(\iota,\iota)$.

	\subsection{Discrete persistence diagrams} \label{sec:pd}
	
	In this section we show how persistence diagrams can be obtained from rank functions on half-open intervals using M\"obius inversion.
	
	The poset $\dgm$ may be visualized as a discrete grid of points
	in the plane
	(see \cref{fig:dgm_poset}). 
	Consider the incidence algebra on $\dgm$ with values in $\bZ$. Elements of the incidence algebra are functions $\intc(\dgm) \to \bZ$. The members of $\intc(\dgm)$ are intervals of the form $[[x,y),[x',y')]$ where
	$[x,y) \subset [x',y')$.
	Going forward,
	let $\mu,\delta,$ and $\zeta$ be the corresponding functions in the incidence algebra on $\dgm$ with values in $\bZ$ unless otherwise noted and let $M$ be a persistence module with indexing poset $[m]$. 
	
	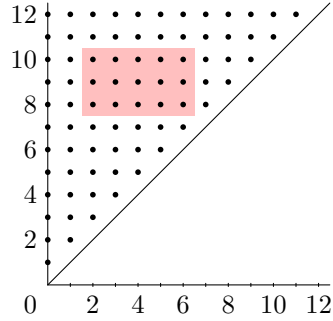
\begin{figure}
		\begin{center}
			\begin{tikzpicture}[line cap=round,line join=round,>=triangle 45,x=1.0cm,y=1.0cm,scale=0.3]
			\draw[color=black] (0,0) -- (12.5,0);
			\foreach \x in {1,2,...,12}
			\draw[shift={(\x,0)},color=black] (0pt,2pt) -- (0pt,-2pt);
			\foreach \x in {2,4,...,12}
			\draw[shift={(\x,0)},color=black] node[below] {\footnotesize $\x$};
			\draw[color=black] (0,0) -- (0,12.5);
			\foreach \y in {2,4,...,12}
			\draw[shift={(0,\y)},color=black] node[left] {\footnotesize $\y$};
			\draw[color=black] (0,0) node[below left] {\footnotesize $0$};
			\draw[color=black] (0,0) -- (12.5,12.5);
			
			\fill[fill=red!25] (6.5,7.5) rectangle (1.5,10.5);
			
			\foreach \x in {0,1,2,...,12} 
			\foreach \y in {0,1,2,...,12} 
			{
				\ifnum \x < \y
				\fill [color=black] (\x,\y) circle (1.25mm); 
				\fi
			}

			\end{tikzpicture}
		\end{center}
		\caption{Visualizing $\dgm$ for $m=11$. Black points $(x,y)$ correspond to half-open intervals $[x,y)$. The points in the shaded region are elements of the interval $[[6,8),[2,10)]$ in $\intc(\dgm)$.}
		\label{fig:dgm_poset} 
	\end{figure}

	\begin{proposition} \label{prop:mobius}
		The M\"obius function $\mu:\intc(\dgm) \to \bZ$ is given by
		$\mu([x,y),[x,y)) = \mu([x,y),[x-1,y+1)) = 1$,
		$\mu([x,y),[x-1,y)) = \mu([x,y),[x,y+1)) = -1$, and
		$\mu([x,y),J) = 0$ otherwise.
	\end{proposition}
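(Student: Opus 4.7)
The plan is to exploit the product structure. Identify $\dgm$ with the sub-poset of $[m+1]^{\op} \times [m+1]$ consisting of pairs $(x,y)$ with $x<y$, via $[x,y) \mapsto (x,y)$, where the product partial order is $(x,y) \leq (x',y')$ iff $x' \leq x$ and $y \leq y'$. I would then compute the Möbius function on each factor and multiply.

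First I would compute $\mu_{[m+1]}$ and $\mu_{[m+1]^{\op}}$. The calculation in \cref{ex:mob_inversion} already shows that $\mu_{[m+1]}[a,a] = 1$, $\mu_{[m+1]}[a,a+1] = -1$, and $\mu_{[m+1]}[a,b] = 0$ for $b \geq a+2$. Since $[m+1]^{\op}$ is the same poset with the order reversed, the successor becomes $a-1$ instead of $a+1$, giving $\mu_{[m+1]^{\op}}[a,a] = 1$, $\mu_{[m+1]^{\op}}[a,a-1] = -1$, and zero otherwise. The standard product formula for Möbius functions on products of locally finite posets then yields
\[
\mu_{[m+1]^{\op} \times [m+1]}\bigl((x,y),(x',y')\bigr) = \mu_{[m+1]^{\op}}(x,x') \cdot \mu_{[m+1]}(y,y'),
\]
and only the four combinations $x' \in \{x, x-1\}$ and $y' \in \{y, y+1\}$ produce nonzero values, matching the statement exactly.

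The key observation that bridges the product poset to $\dgm$ is that intervals in $\dgm$ coincide with the corresponding intervals in $[m+1]^{\op} \times [m+1]$. If $[x,y),[x',y') \in \dgm$ satisfy $[x,y) \subset [x',y')$ and $(p,q)$ lies in the interval $[(x,y),(x',y')]$ of the product poset, then $p \leq x$ and $q \geq y$, so $p \leq x < y \leq q$, hence $p < q$, meaning $[p,q) \in \dgm$. Consequently, no points in the product interval are outside $\dgm$, so the Möbius function defined recursively on $\dgm$ agrees with the one on the product.

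The main (minor) obstacle is precisely this containment check: the Möbius function of a sub-poset is not generally the restriction of the Möbius function of the ambient poset, so one must verify that the recursion in $\dgm$ visits only elements already inside $\dgm$ before invoking the product formula. Once that is in place, the four nonzero cases read off directly from the product of the two factor Möbius functions, and all other intervals yield zero.
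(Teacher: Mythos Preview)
Your proof is correct, but it takes a different route from the paper's. The paper computes directly from the recursive definition of $\mu$: starting from $\mu([x,y),[x,y)) = 1$, it reads off the values at $[x-1,y)$, $[x,y+1)$, and $[x-1,y+1)$ one at a time, then checks by hand that $\mu([x,y),[x,y+2))$, $\mu([x,y),[x-1,y+2))$, $\mu([x,y),[x-2,y))$, and $\mu([x,y),[x-2,y+1))$ all vanish, and finishes with an induction to zero out the rest.

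Your approach instead uses the product structure: you factor $\dgm$ as a convex sub-poset of $[m+1]^{\op} \times [m+1]$, invoke the product formula $\mu_{P \times Q} = \mu_P \cdot \mu_Q$, and justify restricting to the sub-poset by showing every interval of $\dgm$ is already an interval in the product (your key observation that $p \leq x < y \leq q$ forces $p < q$). This is a genuinely cleaner argument: it explains \emph{why} the four nonzero values arrange into a $2 \times 2$ pattern rather than merely verifying it, and it scales immediately to higher-dimensional analogues. The paper's direct computation is more self-contained (it needs nothing beyond \cref{def:functions}), but yours reveals the structure. The one point you flagged as a ``minor obstacle'' is exactly the right thing to check, and your verification is correct.
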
 
	
	\begin{proof} 
		From \cref{def:functions},
		\begin{gather*}
			\mu([x,y),[x,y)) = 1, \\
			\mu([x,y),[x-1,y))=
			-\mu([x,y),[x,y)) = -1, \\
			\mu([x,y),[x,y+1)) =
			-\mu([x,y),[x,y)) = -1, \text{ and} \\
			\mu([x,y),[x-1,y+1)) =
			-\mu([x,y),[x,y)) - \mu([x,y),[x-1,y)) - \mu([x,y),[x,y+1)) = 1\text{.}
		\end{gather*}
		We also have that
		\begin{equation*}
			\mu([x,y),[x,y+2)) = - \mu([x,y),[x,y)) - \mu([x,y),[x,y+1)) = 0,
		\end{equation*}
		
		\vspace{-2em}
		
		\begin{multline*}
			\mu([x,y),[x-1,y+2)) = - \mu([x,y),[x,y)) - \mu([x,y),[x,y+1)) - \mu([x,y),[x-1,y)) \\ - \mu([x,y),[x-1,y+1)) - \mu([x,y),[x,y+2)) = 0,
		\end{multline*}
		and similarly $\mu([x,y),[x-2,y))=0$ and $\mu([x,y),[x-2,y+1))=0$.
		By induction, $\mu([x,y),[x',y')) = 0$ in all other cases.
	\end{proof}
	
	Combining \eqref{eq:convolution} and \cref{prop:mobius}, we have the following.
	
	\begin{corollary}\label{cor:inversion}
		For any $h:\dgm \to \mz$,
		\begin{gather*}
			(\mu*h)[x,y) = h[x,y) - h[x-1,y) - h[x,y+1) + h[x-1,y+1) \text{ if } 1 \leq x < y \leq m, \\ 
			(\mu*h)[x,m+1) = h[x,m+1) - h[x-1,m+1) \text{ if } x \geq 1, \\
			(\mu*h)[0,y) = h[0,y) - h[0,y+1) \text{ if } y \leq m, \text{ and}\\
			(\mu*h)[0,m+1) = h[0,m+1). 
		\end{gather*}
	\end{corollary}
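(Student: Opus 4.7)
The plan is to apply the convolution formula (\ref{eq:convolution}) directly and then read off the nonzero terms using \cref{prop:mobius}. Recall that $\dgm$ is a poset under inclusion, with largest element $[0,m+1)$. Treating $h:\dgm\to\mz$ in the manner of (\ref{eq:convolution}), we have for any $[x,y)\in\dgm$
\[
(\mu * h)[x,y) \;=\; \sum_{[x,y)\subseteq [x',y')\in \dgm}\mu\bigl([x,y),[x',y')\bigr)\,h[x',y').
\]
By \cref{prop:mobius}, the only intervals $[x',y')$ that can contribute nontrivially are $[x,y)$, $[x-1,y)$, $[x,y+1)$, and $[x-1,y+1)$, with M\"obius values $1,-1,-1,1$, respectively.

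Next I would do a case analysis according to whether each of these four candidate intervals actually lies in $\dgm$, i.e.\ whether the shifted coordinates still satisfy $0\le x'<y'\le m+1$. In the interior case $1\le x<y\le m$, all four candidates are valid and summing their contributions gives exactly the first displayed equation. If $x=0$, the candidates $[x-1,y)$ and $[x-1,y+1)$ fall outside $\dgm$, so only $[0,y)$ and $[0,y+1)$ remain; this yields the third formula when $y\le m$, and only $[0,m+1)$ survives when additionally $y+1=m+1$, giving the fourth formula. Dually, if $y=m+1$ but $x\ge 1$, the intervals involving $y+1$ are out of range, leaving only $[x,m+1)$ and $[x-1,m+1)$, which gives the second formula.

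Since \cref{prop:mobius} already catalogs every nonzero value of $\mu$, and the convolution formula is just a finite sum, there is no genuine obstacle here; the entire argument is the bookkeeping needed to keep track of which of the four adjacent cells survives when $[x,y)$ is on the boundary of $\dgm$. The one subtlety worth highlighting is that the convention $y\le m+1$ in $\dgm$ means the ``top'' row $y=m+1$ behaves differently from interior rows, and symmetrically for the left column $x=0$, which is precisely what separates the four stated cases.
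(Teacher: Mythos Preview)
Your proof is correct and follows exactly the approach the paper indicates: the paper simply says ``Combining \eqref{eq:convolution} and \cref{prop:mobius}, we have the following,'' and your write-up spells out that combination together with the boundary bookkeeping. One minor slip: where you write ``when additionally $y+1=m+1$'' you mean $y=m+1$ (so that $[0,y+1)$ falls outside $\dgm$), but the intent is clear.
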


	\begin{definition} 
		The \emph{persistence diagram} of $M$ is the function $\pd:\dgm\to\mz$ given by $\pd := \mu * \Rank$, where $\Rank = \Rank(M)$. 
	\end{definition} 
	
	Persistence diagrams~\cite{cseh:stability} are one of the most popular summaries of persistence modules. Observe that $\zeta*\pd = \zeta*\mu*\Rank = \Rank$. It follows as a consequence of \cref{thm:gabriel} that a persistence module (indexed by $[m]$) is determined up to isomorphism by its persistence diagram.
	For an example of computing the persistence diagram from the rank function and then recovering the rank function from the persistence diagram, see Figures \ref{fig:rank_and_persistence_diagram},
	\ref{fig:PD_calculation}, and
	\ref{fig:zeta_convolution}.
	
	\begin{figure}
		\begin{center}
			\begin{tikzpicture}[line cap=round,line join=round,>=triangle 45,x=1.0cm,y=1.0cm,scale=0.3]
			\draw[color=black] (0,0) -- (12.5,0);
			\foreach \x in {1,2,...,12}
			\draw[shift={(\x,0)},color=black] (0pt,2pt) -- (0pt,-2pt);
			\foreach \x in {2,4,...,12}
			\draw[shift={(\x,0)},color=black] node[below] {\footnotesize $\x$};
			\draw[color=black] (0,0) -- (0,12.5);
			\foreach \y in {2,4,...,12}
			\draw[shift={(0,\y)},color=black] node[left] {\footnotesize $\y$};
			\draw[color=black] (0,0) node[below left] {\footnotesize $0$};
			\draw[color=black] (0,0) -- (12.5,12.5);
			
			\foreach \x in {0,1} 
			\foreach \y in {0,1,2,...,12} 
			{
				\ifnum \x < \y
				\draw [color=black] (\x,\y) circle (1.25mm); 
				\fi
			}
			
			\foreach \x in {2,3} 
			\foreach \y in {9,10,...,12} 
			{
				\ifnum \x < \y
				\draw [color=black] (\x,\y) circle (1.25mm); 
				\fi
			}	
			\foreach \x in {2,3} 
			\foreach \y in {2,3,...,8} 
			{
				\ifnum \x < \y
				\draw[shift={(\x,\y)}] node {$1$}; 
				\fi
			}
			
			\foreach \x in {4,5} 
			\foreach \y in {9,10,...,12} 
			{
				\ifnum \x < \y
				\draw[shift={(\x,\y)}] node {$1$}; 
				\fi
			}	
			
			\foreach \x in {6,7,...,12} 
			\foreach \y in {11,12} 
			{
				\ifnum \x < \y
				\draw[shift={(\x,\y)}] node {$1$}; coordinates{(\x,\y)}; 
				\fi
			}
			
			\foreach \x in {4,5} 
			\foreach \y in {4,5,...,8} 
			{
				\ifnum \x < \y
				\draw[shift={(\x,\y)}] node {$2$}; 
				\fi
			}	
			
			\foreach \x in {6,7,...,10} 
			\foreach \y in {9,10} 
			{
				\ifnum \x < \y
				\draw[shift={(\x,\y)}] node {$2$};  
				\fi
			}	
			
			\foreach \x in {6,7,8} 
			\foreach \y in {6,7,8} 
			{
				\ifnum \x < \y
				\draw[shift={(\x,\y)}] node {$3$}; 
				\fi
			}		
			
			\end{tikzpicture}
			\begin{tikzpicture}[line cap=round,line join=round,>=triangle 45,x=1.0cm,y=1.0cm,scale=0.3]
			\draw[color=black] (0,0) -- (12.5,0);
			\foreach \x in {1,2,...,12}
			\draw[shift={(\x,0)},color=black] (0pt,2pt) -- (0pt,-2pt);
			\foreach \x in {2,4,...,12}
			\draw[shift={(\x,0)},color=black] node[below] {\footnotesize $\x$};
			\draw[color=black] (0,0) -- (0,12.5);
			\foreach \y in {2,4,...,12}
			\draw[shift={(0,\y)},color=black] node[left] {\footnotesize $\y$};
			\draw[color=black] (0,0) node[below left] {\footnotesize $0$};
			\draw[color=black] (0,0) -- (12.5,12.5);
			
			\foreach \x in {0,1,2,...,12} 
			\foreach \y in {0,1,2,...,12} 
			{
				\ifnum \x < \y
				\draw [color=black] (\x,\y) circle (1.25mm); 
				\fi
			}
			
			\fill [color=blue] (6,10) circle (3mm);
			\fill [color=blue] (4,12) circle (3mm);
			\fill [color=blue] (2,8) circle (3mm);
			\end{tikzpicture}
			
		\end{center}
		\caption{Visualizations of the functions $\Rank,\pd:\dgm\to\mz$ for a persistence module. {\bf Left}: The values of $\Rank$ in the plane. Circles indicate intervals that evaluate to $0$. {\bf Right}: Dark blue disks indicate elements of $\dgm$ where $\pd$ evaluates to $1$.  Other intervals evaluate to $0$.  For an example of the computation of $\pd$ see Figure~\ref{fig:PD_calculation}.}
		\label{fig:rank_and_persistence_diagram}
	\end{figure}
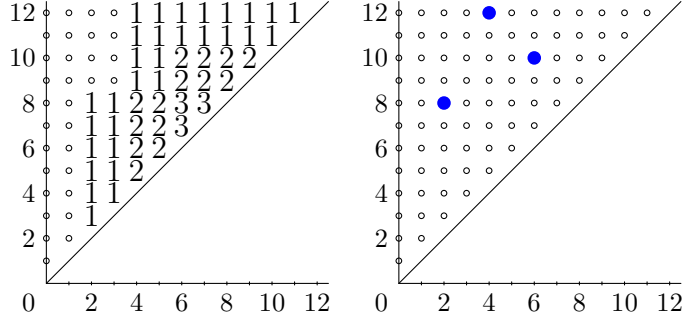
	
	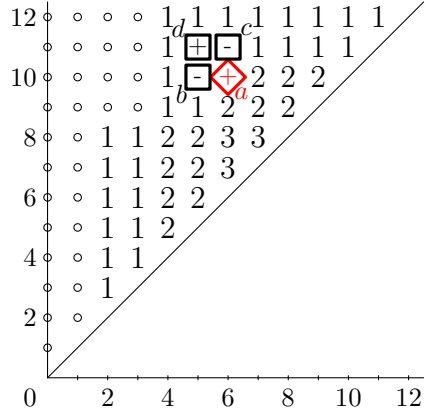
\begin{figure}
		\begin{center}
			\begin{tikzpicture}[line cap=round,line join=round,>=triangle 45,x=1.0cm,y=1.0cm,scale=0.4]
			
			\draw[color=black] (0,0) -- (12.5,0);
			\foreach \x in {1,2,...,12}
			\draw[shift={(\x,0)},color=black] (0pt,2pt) -- (0pt,-2pt);
			\foreach \x in {2,4,...,12}
			\draw[shift={(\x,0)},color=black] node[below] {\footnotesize $\x$};
			\draw[color=black] (0,0) -- (0,12.5);
			\foreach \y in {2,4,...,12}
			\draw[shift={(0,\y)},color=black] node[left] {\footnotesize $\y$};
			\draw[color=black] (0,0) node[below left] {\footnotesize $0$};
			\draw[color=black] (0,0) -- (12.5,12.5);
			
			\foreach \x in {0,1} 
			\foreach \y in {0,1,2,...,12} 
			{
				\ifnum \x < \y
				\draw [color=black] (\x,\y) circle (1.25mm); 
				\fi
			}
			
			\foreach \x in {2,3} 
			\foreach \y in {9,10,...,12} 
			{
				\ifnum \x < \y
				\draw [color=black] (\x,\y) circle (1.25mm); 
				\fi
			}	
			\foreach \x in {2,3} 
			\foreach \y in {2,3,...,8} 
			{
				\ifnum \x < \y
				\draw[shift={(\x,\y)}] node {$1$}; 
				\fi
			}
			
			\foreach \x in {4,5} 
			\foreach \y in {9,12} 
			{
				\ifnum \x < \y
				\draw[shift={(\x,\y)}] node {$1$}; 
				\fi
			}	
			
			\foreach \x in {4} 
			\foreach \y in {10,11} 
			{
				\ifnum \x < \y
				\draw[shift={(\x,\y)}] node {$1$}; 
				\fi
			}	
			
			\foreach \x in {6,7,...,12} 
			\foreach \y in {12} 
			{
				\ifnum \x < \y
				\draw[shift={(\x,\y)}] node {$1$}; 
				\fi
			}
			
			\foreach \x in {7,...,12} 
			\foreach \y in {11} 
			{
				\ifnum \x < \y
				\draw[shift={(\x,\y)}] node {$1$};
				\fi
			}	
			
			\foreach \x in {4,5} 
			\foreach \y in {4,5,...,8} 
			{
				\ifnum \x < \y
				\draw[shift={(\x,\y)}] node {$2$}; 
				\fi
			}	
			
			\foreach \x in {7,...,10} 
			\foreach \y in {9,10} 
			{
				\ifnum \x < \y
				\draw[shift={(\x,\y)}] node {$2$}; 
				\fi
			}	
			
			\foreach \x in {6} 
			\foreach \y in {9} 
			{
				\ifnum \x < \y
				\draw[shift={(\x,\y)}] node {$2$}; 
				\fi
			}	
			\foreach \x in {6,7,8} 
			\foreach \y in {6,7,8} 
			{
				\ifnum \x < \y
				\draw[shift={(\x,\y)}] node {$3$}; 
				\fi
			}				
			
			\draw (6,10) node[diamond,
			color=red,draw,inner sep=0pt,scale=0.8,very thick] {+};
			\draw (5,10) node[color=black,draw,inner sep=0pt,scale=0.8,very thick,minimum size=4mm] {-};
			\draw (6,11) node[color=black,draw,inner sep=0pt,scale=0.8,very thick,minimum size=4mm] {-};
			\draw (5,11) node[color=black,draw,scale=0.8,inner sep=0pt,very thick,minimum size=4mm] {+};
			
			\draw[color=red] (6.45,9.45) node {\footnotesize ${a}$};
			\draw[color=black] (4.45,9.45) node {\footnotesize ${b}$};
			\draw[color=black] (6.6,11.6) node {\footnotesize ${c}$};
			\draw[color=black] (4.35,11.65) node {\footnotesize ${d}$};
			\end{tikzpicture} 
		\end{center}
		\caption{
			Calculating $\pd$, which is $\mu*\Rank$ by definition, from the $\Rank$ function using \cref{cor:inversion}. The values of $\Rank$ (depicted as in \cref{fig:rank_and_persistence_diagram}) are summed with the indicated sign to obtain the value of $\pd$ at the bottom right point.
			$\pd({a}) = \Rank({a}) - \Rank({b}) - \Rank({c}) + \Rank({d}) = 2 - 1 - 1 + 1 = 1$.
		}\label{fig:PD_calculation}
	\end{figure} 
	
	\begin{figure}
		\begin{center}
			\begin{tikzpicture}[line cap=round,line join=round,>=triangle 45,x=1.0cm,y=1.0cm,scale=0.3]
			\fill[fill=red!25] (6.5,7.5) rectangle (0,12.5);
			\draw[color=black] (0,0) -- (12.5,0);
			\foreach \x in {1,2,...,12}
			\draw[shift={(\x,0)},color=black] (0pt,2pt) -- (0pt,-2pt);
			\foreach \x in {2,4,...,12}
			\draw[shift={(\x,0)},color=black] node[below] {\footnotesize $\x$};
			\draw[color=black] (0,0) -- (0,12.5);
			\foreach \y in {2,4,...,12}
			\draw[shift={(0,\y)},color=black] node[left] {\footnotesize $\y$};
			\draw[color=black] (0,0) node[below left] {\footnotesize $0$};
			\draw[color=black] (0,0) -- (12.5,12.5);
			
			\draw [black] plot [only marks, mark=star,mark size=2.5mm] coordinates{(6,8)};
			
			\foreach \x in {0,1,2,...,5,7,8,...,12} 
			\foreach \y in {0,1,2,...,12} 
			{
				\ifnum \x < \y
				\draw [color=black] (\x,\y) circle (1.25mm); 
				\fi
			}
			\foreach \y in {0,...,7,9,10,11,12}
			{
				\ifnum 6 < \y
				\draw [color=black] (6,\y) circle (1.25mm); 
				\fi
			}
			
			\fill [color=blue] (6,10) circle (3mm);
			\fill [color=blue] (4,12) circle (3mm);
			\fill [color=blue] (2,8) circle (3mm);
			\end{tikzpicture}
			
		\end{center}
		\caption{
			Calculation of $\Rank$ from $\pd:\dgm\to\mz$ via the equality $\Rank = \zeta * \pd$. Dark blue disks indicate elements of $\dgm$ where $\pd$ evaluates to $1$. Other elements evaluate to $0$. The value $\Rank[6,8) = 3$, indicated by a star, is obtained by summing all values of $\pd$ up and to the left of $[6,8)$ in the highlighted region. 
		}\label{fig:zeta_convolution}
	\end{figure}
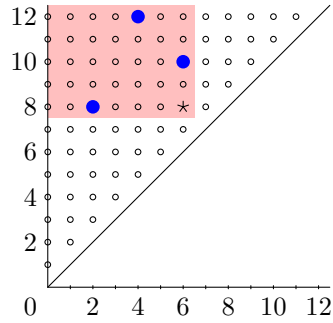

	\subsection{Continuous persistence diagrams} \label{sec:cont}
	
	Let $M$ be a persistence module index by $[m]$ with $\overline{M}$ the corresponding persistence indexed by $\R$ via $\iota:[m+1]\to\R$.
	
	\begin{definition}
		\label{def:continuous_diagrams} The persistence diagram of $\overline{M}$ is the function (not necessarily order-reversing) $\pd(\overline{M}):\R^2_<\to\mz$ given by 
		\[
		\pd(\overline{M})[a,b) = \begin{cases} 
		\pd[x,y) & \text{if }[a,b) = [\iota(x),\iota(y)) \\ 
		0 & \text{otherwise.}
		\end{cases}
		\]
	\end{definition}
	
	\begin{remark}
		Note that $\zeta*\pd(\overline{M})$ is well-defined when $\overline{M}$ is of the form given. One may check that $\Rank(\overline{M}) = \zeta*\pd(\overline{M})$.
	\end{remark}

	\section{Graded rank function and graded persistence diagrams} \label{sec:gpd}
	
	In this section we introduce graded versions of the $\rank$ function and persistence diagrams. \Cref{thm:consistency} establishes the relationship between these graded functions and their ungraded counterparts.
	
	\subsection{Graded rank function}
	\label{sec:graded-rank}
	
	Using unary numbers we obtain a graded version of the rank function.
	
	\begin{definition} \label{def:uk}
		For any natural number $k\geq 1$, let $u_k:\mz_{\geq 0} \to \mz$ be the step function given by 
		\[
		u_k(n) = 
		\begin{cases}
		1 & \text{if } n \geq k \\ 
		0 & \text{otherwise.} 
		\end{cases}
		\]
		Given any $n\in\mz_{\geq 0}$ we can form the sequence $(u_k(n))_{k\geq 1}$. For example, if $n=5$ we obtain the sequence $(1,1,1,1,1,0,0,\dots)$. This sequence is called the \emph{unary representation} of $n$ and its sum is $n$.
		Colloquially, it represents a number using \emph{tally marks}.
		More abstractly, we have the function $(u_k)_{k \geq 1}: \mz_{\geq 0} \to \bigoplus_{k\geq 1} \mz$.
		Let $\ptwise$ denote the function $\oplus_{k \geq 1} \mz \to \mz$ given by summation. This function is well defined since the sequence $(a_k)_{k \geq 1}$ in $\bigoplus_{k\geq 1} \mz$ has only finitely many nonzero terms.
		Furthermore,
		for all $n \in \mz_{\geq 0}$, $\ptwise (u_k)_{k\geq 1} (n) = n$.
	\end{definition}

	Recall from \cref{sec:rank-half-open} that for a persistence module $M$ indexed by $P$ we have a corresponding poset map $\Rank(M): (P^2_<)^{\op} \to \Z_+$.
	Note that $u_k$ is a poset map $u_k: \Z_+ \to \Z_+$.
	
	\begin{definition} 
		\label{def:graded-rank}
		The $k$-th \emph{graded rank function} of $M$ is the poset map $\Rank_k(M): (P^2_<)^{\op} \to \Z_+$ defined by $\Rank_k(M) = u_k \circ \Rank(M)$.
		The \emph{graded rank function} $\Rank_*(M): P^2_< \to \bigoplus_{k\geq 1} \mz$ is
		given by $\Rank_* = (\Rank_k)_{k\geq 1}$.
	\end{definition}

	\begin{figure}
		\begin{equation*}
			\begin{tikzcd}
				(\dgm)^{\op} \ar[rr,"\Rank(\hat{M})"] \ar[d,"{(\iota,\iota)}"',hook] 
				& & \Z_{+} \ar[rr,"u_k"] & & \Z_{+} \\
				(\Dgm)^{\op} \ar[urr,"\Rank(\overline{M})"']      
			\end{tikzcd}
		\end{equation*}
		\caption{The graded rank functions on half-open intervals associated to a persistence module indexed by $[m]$ and an injective map $\iota:[m+1] \to \R$. The graded rank function $\Rank_k(\overline{M})$ given by $u_k \circ \Rank(\overline{M})$ is a canonical extension of the rank function $\Rank_k(\hat{M})$ given by $u_k \circ \Rank(\hat{M})$.}
		\label{fig:graded-rank}
	\end{figure}
	
	See \cref{fig:graded-rank}. Consider a persistence module $M$ indexed by $[m]$ and injective map $\iota:[m+1] \to \R$.
	We have corresponding persistence modules $\hat{M}$ indexed by $[m+1]$ and $\overline{M}$ indexed by $\R$.
	By \cref{def:graded-rank}, we have $\Rank_k(\hat{M}): \dgm \to \Z_+$ and $\Rank_k(\overline{M}): \Dgm \to \Z_+$.
	Recall that we sometimes write $\Rank(M)$ for $\Rank(\hat{M})$.
	In categorical language, $\Rank_k(\overline{M})$ is the left Kan extension of $\Rank_k(\hat{M})$ along $(\iota,\iota)$.
	We may also say that the support of $\Rank_k(\overline{M})$ is the downward closure of the support of $\Rank_k(\hat{M})$.
	
	\subsection{Graded persistence diagram}
	\label{sec:graded-pd}
	
	Applying M\"obius inversion to the graded rank function, we obtain the graded persistence diagram.
	
	\begin{definition}
		The $k$-th \emph{graded persistence diagram} of $M$ is the function $\pd_k(M): \dgm \to \mz$ given by
		$\pd_k(M) = \mu * \Rank_k(M)$ and
		the \emph{graded persistence diagram} is the function
		$\pd_*(M): \dgm \to \bigoplus_{k\geq 1} \mz$ given by
		$\pd_*(M) = (\pd_k(M))_{k\geq 1}$. 
	\end{definition}
	
	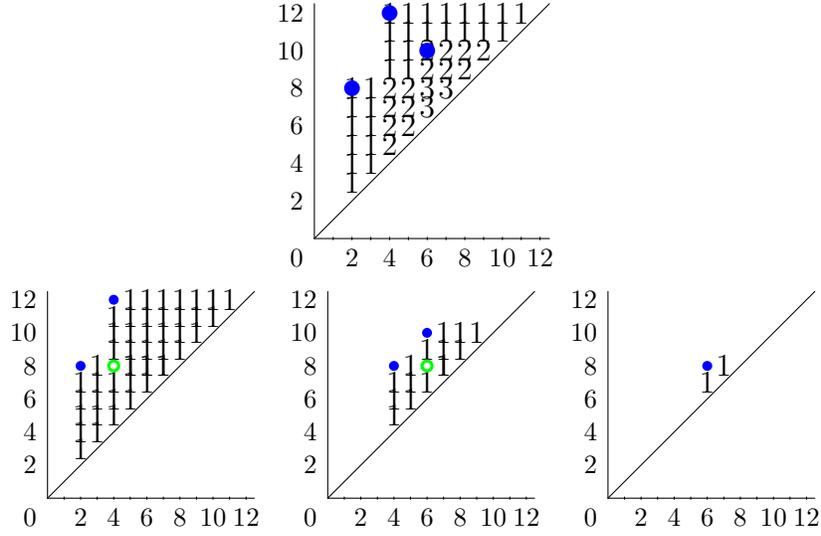
\begin{figure}
		\begin{center} 
			\begin{tikzpicture}[line cap=round,line join=round,>=triangle 45,x=1.0cm,y=1.0cm,scale=0.25]
			\draw[color=black] (0,0) -- (12.5,0);
			\foreach \x in {1,2,...,12}
			\draw[shift={(\x,0)},color=black] (0pt,2pt) -- (0pt,-2pt);
			\foreach \x in {2,4,...,12}
			\draw[shift={(\x,0)},color=black] node[below] {\footnotesize $\x$};
			\draw[color=black] (0,0) -- (0,12.5);
			\foreach \y in {2,4,...,12}
			\draw[shift={(0,\y)},color=black] node[left] {\footnotesize $\y$};
			\draw[color=black] (0,0) node[below left] {\footnotesize $0$};
			\draw[color=black] (0,0) -- (12.5,12.5);
			
			\foreach \x in {2,3} 
			\foreach \y in {2,3,...,8} 
			{
				\ifnum \x < \y
				\draw[shift={(\x,\y)}] node {$1$}; 
				\fi
			}
			
			\foreach \x in {4,5} 
			\foreach \y in {9,10,...,12} 
			{
				\ifnum \x < \y
				\draw[shift={(\x,\y)}] node {$1$}; 
				\fi
			}	
			
			\foreach \x in {6,7,...,12} 
			\foreach \y in {11,12} 
			{
				\ifnum \x < \y
				\draw[shift={(\x,\y)}] node {$1$}; coordinates{(\x,\y)}; 
				\fi
			}
			
			\foreach \x in {4,5} 
			\foreach \y in {4,5,...,8} 
			{
				\ifnum \x < \y
				\draw[shift={(\x,\y)}] node {$2$}; 
				\fi
			}	
			
			\foreach \x in {6,7,...,10} 
			\foreach \y in {9,10} 
			{
				\ifnum \x < \y
				\draw[shift={(\x,\y)}] node {$2$};  
				\fi
			}	
			
			\foreach \x in {6,7,8} 
			\foreach \y in {6,7,8} 
			{
				\ifnum \x < \y
				\draw[shift={(\x,\y)}] node {$3$}; 
				\fi
			}		
			
			\fill [color=blue] (6,10) circle (4.2mm);
			\fill [color=blue] (4,12) circle (4.2mm);
			\fill [color=blue] (2,8) circle (4.2mm);
			\end{tikzpicture}
			
			\begin{tikzpicture}[line cap=round,line join=round,>=triangle 45,x=1.0cm,y=1.0cm,scale=0.22]
			\draw[color=black] (0,0) -- (12.5,0);
			\foreach \x in {1,2,...,12}
			\draw[shift={(\x,0)},color=black] (0pt,2pt) -- (0pt,-2pt);
			\foreach \x in {2,4,...,12}
			\draw[shift={(\x,0)},color=black] node[below] {\footnotesize $\x$};
			\draw[color=black] (0,0) -- (0,12.5);
			\foreach \y in {2,4,...,12}
			\draw[shift={(0,\y)},color=black] node[left] {\footnotesize $\y$};
			\draw[color=black] (0,0) node[below left] {\footnotesize $0$};
			\draw[color=black] (0,0) -- (12.5,12.5);
			\foreach \x in {2,3,...,8} 
			\foreach \y in {2,3,...,8} 
			{			
				\ifnum \x < \y
				\ifnum \x = 2
				\ifnum \y = 8
				
				\else 
				\draw[shift={(\x,\y)}] node {$1$};
				\fi
				\else
				\ifnum \x = 4
				\ifnum \y=8
				\else
				\draw[shift={(\x,\y)}] node {$1$};
				\fi 
				\else 
				\draw[shift={(\x,\y)}] node {$1$};
				\fi
				\fi 
				\fi
			}	
			\foreach \x in {4,5,...,12} 
			\foreach \y in {8,9,...,12} 
			{				
				\ifnum \x < \y
				\ifnum \x = 4
				\ifnum \y = 8
				
				\else 
				\ifnum \y = 12
				\else
				\draw[shift={(\x,\y)}] node {$1$};
				\fi
				\fi
				\else
				\draw[shift={(\x,\y)}] node {$1$};
				\fi
				\fi
			}							
			
			\fill [color=blue] (4,12) circle (3mm);
			\fill [color=blue] (2,8) circle (3mm);
			\fill [color=white] (4,8) circle (3mm);
			\draw [very thick,color=green] (4,8) circle (3mm);
			\end{tikzpicture}
			\begin{tikzpicture}[line cap=round,line join=round,>=triangle 45,x=1.0cm,y=1.0cm,scale=0.22]
			\draw[color=black] (0,0) -- (12.5,0);
			\foreach \x in {1,2,...,12}
			\draw[shift={(\x,0)},color=black] (0pt,2pt) -- (0pt,-2pt);
			\foreach \x in {2,4,...,12}
			\draw[shift={(\x,0)},color=black] node[below] {\footnotesize $\x$};
			\draw[color=black] (0,0) -- (0,12.5);
			\foreach \y in {2,4,...,12}
			\draw[shift={(0,\y)},color=black] node[left] {\footnotesize $\y$};
			\draw[color=black] (0,0) node[below left] {\footnotesize $0$};
			\draw[color=black] (0,0) -- (12.5,12.5);
			
			\foreach \x in {0,1,2,3} 
			\foreach \y in {0,1,2,...,12} 
			{
				\ifnum \x < \y
				\fi
			}
			
			\foreach \x in {4,5,...,7} 
			\foreach \y in {9,10,...,12} 
			{
				\ifnum \x < \y
				\fi
			}
			
			\foreach \x in {6,7,...,12} 
			\foreach \y in {11,12} 
			{
				\ifnum \x < \y 
				\fi
			}
			
			\foreach \x in {4,5,...,10} 
			\foreach \y in {4,5,...,8} 
			{		
				\ifnum \x < \y
				\ifnum \x = 4
				\ifnum \y = 8
				
				\else 
				\draw[shift={(\x,\y)}] node {$1$};
				\fi
				\else
				\ifnum \x = 6
				\ifnum \y=8
				\else
				\draw[shift={(\x,\y)}] node {$1$};
				\fi 
				\else 
				\draw[shift={(\x,\y)}] node {$1$};
				\fi
				\fi 
				\fi
			}	
			\foreach \x in {6,7,...,10} 
			\foreach \y in {8,9,10} 
			{
				\ifnum \x < \y
				\ifnum \x = 6
				\ifnum \y = 10
				
				\else
				\ifnum \y = 8
				
				\else
				\draw[shift={(\x,\y)}] node {$1$};
				\fi
				\fi
				
				\else
				\draw[shift={(\x,\y)}] node {$1$};
				\fi
				\fi
			}

			\fill [color=blue] (6,10) circle (3mm);
			\fill [color=white] (6,8) circle (3mm);
			\draw [very thick,green=blue] (6,8) circle (3mm);
			\fill [color=blue] (4,8) circle (3mm);
			\end{tikzpicture}
			\begin{tikzpicture}[line cap=round,line join=round,>=triangle 45,x=1.0cm,y=1.0cm,scale=0.22]
			\draw[color=black] (0,0) -- (12.5,0);
			\foreach \x in {1,2,...,12}
			\draw[shift={(\x,0)},color=black] (0pt,2pt) -- (0pt,-2pt);
			\foreach \x in {2,4,...,12}
			\draw[shift={(\x,0)},color=black] node[below] {\footnotesize $\x$};
			\draw[color=black] (0,0) -- (0,12.5);
			\foreach \y in {2,4,...,12}
			\draw[shift={(0,\y)},color=black] node[left] {\footnotesize $\y$};
			
			\draw[color=black] (0,0) node[below left] {\footnotesize $0$};
			\draw[color=black] (0,0) -- (12.5,12.5);			
			\draw[shift={(6,7)}] node {$1$};
			\draw[shift={(7,8)}] node {$1$};

			\fill [color=blue] (6,8) circle (3mm);
			\end{tikzpicture}
		\end{center}
		\caption{ The functions $\Rank$, $\pd$, $\Rank_k$, and $\pd_k$ for a persistence module. {\bf Top}: The $\Rank$ function and $\pd$. Dark blue disks are where $\pd$ evaluates to $1$. {\bf Left-to-right on bottom row}: $\Rank_k$ and $\pd_k$ for $k=1,2,3$. Dark blue disks are where $\pd_k$ evaluates to $1$, and light green circles are where $\pd_k$ evaluates to $-1$. Hidden numbers underneath the disks and circles are all $1$ except for the rightmost disk in the top figure where the hidden value is $2$. }\label{fig:graded_functions}
	\end{figure}
	
	For simplicity, we omit $M$ from the notation when there is no risk of confusion. The graded persistence diagram of $\overline{M}$ (where $\overline{M}:\R\to\vect_K$ is defined as in \cref{sec:discrete-cont}) is defined in the same way as the persistence diagram of $\overline{M}$ in \cref{def:continuous_diagrams}.
	
	\begin{proposition} \label{prop:commute} 
		Consider $f_k:\dgm \to \mz$ for $k \geq 1$ such that for $[a,b) \in \dgm$, $f_k[a,b)=0$ for all but finitely many $k$.
		Then $\ptwise(\mu * f_k)_{k \geq 1} = \mu * (\ptwise (f_k)_{k \geq 1})$.
	\end{proposition}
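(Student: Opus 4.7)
The plan is to verify the identity pointwise on each half-open interval $[a,b) \in \dgm$ by unfolding both sides using the explicit formula for $\mu$-convolution supplied by \cref{cor:inversion}. The entire argument reduces to a rearrangement of finite sums, with the finiteness coming from two sources: the M\"obius function has finite support in each slice $\mu([a,b),-)$ (\cref{prop:mobius}), and the hypothesis guarantees that each family $(f_k[x,y))_{k \geq 1}$ has only finitely many nonzero entries.

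First I would fix an interior interval $[a,b) \in \dgm$ with $1 \leq a < b \leq m$ and note that by \cref{cor:inversion}
\begin{equation*}
(\mu * g)[a,b) = g[a,b) - g[a-1,b) - g[a,b+1) + g[a-1,b+1)
\end{equation*}
for any $g: \dgm \to \mz$. In particular, $g \mapsto (\mu * g)[a,b)$ is $\mz$-linear in $g$ and depends on only four values of $g$. Applying this to each $f_k$ and summing, the hypothesis ensures that each of the four sums $\sum_{k \geq 1} f_k[x,y)$ at the relevant arguments is a finite sum, and consequently $(\mu * f_k)[a,b)$ is nonzero for only finitely many $k$, so $\ptwise(\mu * f_k)_{k \geq 1}[a,b)$ is well defined.

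Next I would rearrange this finite signed sum:
\begin{equation*}
\sum_{k \geq 1} \bigl(f_k[a,b) - f_k[a-1,b) - f_k[a,b+1) + f_k[a-1,b+1)\bigr) = \sum_{k \geq 1} f_k[a,b) - \sum_{k \geq 1} f_k[a-1,b) - \sum_{k \geq 1} f_k[a,b+1) + \sum_{k \geq 1} f_k[a-1,b+1),
\end{equation*}
which by the definition of $\ptwise$ equals
\begin{equation*}
\bigl(\ptwise(f_k)_{k \geq 1}\bigr)[a,b) - \bigl(\ptwise(f_k)_{k \geq 1}\bigr)[a-1,b) - \bigl(\ptwise(f_k)_{k \geq 1}\bigr)[a,b+1) + \bigl(\ptwise(f_k)_{k \geq 1}\bigr)[a-1,b+1),
\end{equation*}
and one more application of \cref{cor:inversion} identifies this with $(\mu * \ptwise(f_k)_{k \geq 1})[a,b)$. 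The three boundary cases of \cref{cor:inversion} (namely $b = m+1$, $a = 0$, and the corner $[0,m+1)$) are handled by the same rearrangement applied to three, two, or one terms respectively.

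There is no genuine obstacle in this proof; the only subtlety to flag is that the outer operator $\ptwise$ is a priori an infinite sum, so linearity of $\mu * (-)$ does not formally commute with it for free. The pointwise-finite support hypothesis on $(f_k)_{k \geq 1}$ is precisely what guarantees that every exchange of summations above is between finite sums, which is why the argument goes through without any convergence considerations.
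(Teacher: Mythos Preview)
Your proof is correct and follows essentially the same approach as the paper: both arguments evaluate at a fixed interval and interchange the two finite summations. The only difference is presentational: the paper works directly with the general convolution formula $(\mu*f_k)(I) = \sum_{I \subset I'} \mu(I,I')f_k(I')$ and swaps $\sum_{k\geq 1}$ with $\sum_{I\subset I'}$ in one step, whereas you first expand via \cref{cor:inversion} into the explicit four-term (or fewer) formula and then swap. This costs you the boundary case analysis but gains nothing, so the paper's route is marginally cleaner; the underlying idea is identical.
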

	
	\begin{proof}
		Let $I$ be an interval in $\dgm$. 
		\begin{align*}
			\ptwise(\mu*f_k)_{k\geq 1}(I)
			& = \sum_{k\geq 1} \sum_{I \subset I'} \mu(I,I')f_k(I') \\
			& = \sum_{I \subset I'} \mu(I,I')\sum_{k\geq 1} f_k(I') \\
			& = \sum_{I \subset I'} \mu(I,I') \ptwise (f_k)_{k \geq 1}(I') \\
			& = \mu * (\ptwise(f_k)_{k\geq 1})(I) \qedhere
		\end{align*}
	\end{proof}
	
	\begin{theorem}[Consistency Theorem] \label{thm:consistency} 
		The following diagram commutes. That is, horizontal pairs of maps are inverses, $\ptwise \Rank_* = \Rank$ and $\ptwise \pd_* = \pd$.
		\par
		\adjustbox{scale=1,center}{
			\begin{tikzcd}[ampersand replacement=\&]
				\Rank \arrow[shift left=1ex,mapsto]{rr}{\mu * -} \arrow[shift right=2ex,swap,mapsto]{ddd}{(u_k\circ -)_{k\geq 1}}  \& \& \arrow[mapsto,dashed]{ll}{\zeta * -}  \pd  \\ \\ \\ 
				\arrow[shift right=2ex, swap,mapsto,dashed]{uuu}{\ptwise \circ -} \Rank_* \arrow[shift left=1ex,mapsto]{rr}{(\mu * -)_{k\geq 1}}  \& \& \arrow[shift right=1ex, swap,mapsto,dashed]{uuu}{\ptwise \circ -} \arrow[mapsto,dashed]{ll}{(\zeta * -)_{k\geq 1} } \pd_*
			\end{tikzcd} 
		}
		
	\end{theorem}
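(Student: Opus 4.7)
The plan is to verify each of the three claims in the diagram separately and observe that they imply commutativity.

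First, I would handle the horizontal inverse claims. For the top row, we have $\pd = \mu * \Rank$ by definition, so $\zeta * \pd = \zeta * (\mu * \Rank) = (\zeta * \mu) * \Rank = \delta * \Rank = \Rank$ by associativity of convolution (\cref{prop:identities}(1)), the identity $\zeta * \mu = \delta$ (\cref{prop:identities}(3)), and the fact that $\delta$ acts as a two-sided identity (\cref{prop:identities}(2)). Since $\Rank_k = u_k \circ \Rank$ takes values in $\mz$, the same incidence-algebra identities apply componentwise, giving $(\zeta * -)_{k\geq 1} \circ (\mu * -)_{k\geq 1} = \mathrm{id}$ on the bottom row.

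Next, I would verify the left vertical identity $\ptwise \Rank_* = \Rank$. By definition, $\Rank_*(I) = (u_k(\Rank(I)))_{k \geq 1}$ for each half-open interval $I$. Since $\Rank(I) \in \mz_{\geq 0}$, the definition of the unary representation in \cref{def:uk} gives
\[
\ptwise \Rank_*(I) = \sum_{k \geq 1} u_k(\Rank(I)) = \Rank(I),
\]
where the second equality is the identity $\ptwise \circ (u_k)_{k\geq 1} = \mathrm{id}_{\mz_{\geq 0}}$ noted at the end of \cref{def:uk}.

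Finally, for $\ptwise \pd_* = \pd$, I would apply \cref{prop:commute} to the family $\Rank_k$. Since for each fixed $I \in \dgm$ only finitely many values $\Rank_k(I)$ are nonzero (indeed, $\Rank_k(I) = 0$ whenever $k > \Rank(I)$), the hypothesis of \cref{prop:commute} is satisfied and we obtain
\[
\ptwise \pd_* = \ptwise (\mu * \Rank_k)_{k \geq 1} = \mu * \ptwise (\Rank_k)_{k \geq 1} = \mu * \Rank = \pd,
\]
using $\ptwise \Rank_* = \Rank$ in the penultimate step. There is no real obstacle: all three components follow immediately once one invokes the appropriate prior result, and the only subtlety is checking the finite-support condition of \cref{prop:commute}, which holds because $\Rank(I)$ is a finite non-negative integer for every $I$.
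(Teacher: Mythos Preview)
Your proposal is correct and follows essentially the same approach as the paper's proof: both verify the horizontal inverses via the incidence-algebra identities of \cref{prop:identities}, the left vertical identity via $\ptwise \circ (u_k)_{k\geq 1} = \mathrm{id}_{\mz_{\geq 0}}$, and the right vertical identity via \cref{prop:commute}. Your version is slightly more explicit in invoking associativity and in checking the finite-support hypothesis of \cref{prop:commute}, but the logical structure is identical.
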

	\begin{proof} 
		We are given $\Rank: \dgm \to \mz_{\geq 0} \subset \mz$. $\pd$, $\Rank_*$, and $\pd_*$ are given by definition by the solid arrows:
		$\pd = \mu * \Rank$, $\Rank_k = u_k\circ\Rank$,
		$\Rank_* = (\Rank_k)_{k\geq 1}$,
		$\pd_k = \mu * \Rank_k$,
		and $\pd_* = (\pd_k)_{k\geq 1}$.
		Now consider the dashed arrows.
		The horizontal maps are inverses since $\zeta$ is the inverse of $\mu$ in the incidence algebra (\cref{prop:identities}).
		Recall that the composition $\ptwise (u_k)_{k\geq 1}$ is the identity on $\mz_{\geq 0}$. It follows that $\ptwise \Rank_* = \Rank$.
		Finally, by \cref{prop:commute}, we have that $\ptwise ( \mu * \Rank_k)_{k\geq 1} = \mu * (\ptwise \Rank_*) = \mu * \Rank$. That is, $\ptwise \pd_* = \pd$.
	\end{proof}

	\subsection{Support of the graded rank function}
	\label{sec:support}
	
	We now relate the $k$-th graded persistence diagram to the maximal elements of the support of the $k$-th graded rank function.
	
	The \emph{support} of a function $f:X \to \mz$ is the set $\{x \in X \ | \ f(x) \neq 0\}$.
	Since $\Rank_k$ is an order-reversing function from $[m+1]^2_<$ to $(\{0,1\},\leq)$ it follows that its support is a \emph{down-set}. That is, if $[x,y)$ is in the support of $\Rank_k$ and $[x',y') \subset [x,y)$ holds in $[m+1]^2_<$ then $[x',y')$ is in the support of $\Rank_k$. 
	The same is true for $\Rank_k$ as an order-reversing function from $\R^2_<$ to $(\{0,1\},\leq)$. Consider $\Rank_k: [m+1]^2_< \to \mz$. Recall that $\pd_k = \mu * \Rank_k$.
	
	\begin{proposition} \label{prop:pdk}
		Consider $[a,b) \in [m+1]^2_<$. Then $\pd_k[a,b) = 1$ if and only if $[a,b)$ is a maximal element in $[m+1]^2_<$ of the support of $\Rank_k$. Also, $\pd_k[a,b) = -1$ if and only if $[a,b)$ is the greatest lower bound of two maximal elements in $[m+1]^2_<$ of the support of $\Rank_k$.
	\end{proposition}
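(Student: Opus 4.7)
The plan is to leverage the M\"obius inversion formula from \cref{cor:inversion} together with the observation that $\Rank_k$ is $\{0,1\}$-valued and order-reversing on $\dgm$, reducing the proposition to a short finite case analysis. Set $A = \Rank_k[a,b)$, $B = \Rank_k[a-1,b)$, $C = \Rank_k[a,b+1)$, and $D = \Rank_k[a-1,b+1)$, with boundary cases ($a = 0$ or $b = m+1$) handled by the simplified formulas in \cref{cor:inversion}. Being order-reversing and $\{0,1\}$-valued forces $D \leq B \leq A$ and $D \leq C \leq A$, and then \cref{cor:inversion} yields $\pd_k[a,b) = A - B - C + D$. A direct enumeration of the six admissible $\{0,1\}$-tuples $(A,B,C,D)$ shows $\pd_k[a,b) = 1$ exactly when $(A,B,C,D) = (1,0,0,0)$, $\pd_k[a,b) = -1$ exactly when $(A,B,C,D) = (1,1,1,0)$, and $\pd_k[a,b) = 0$ in every other case.

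For the $+1$ characterization, note that every strict super-interval $[a',b')$ of $[a,b)$ in $\dgm$ satisfies $a' \leq a-1$ or $b' \geq b+1$, and hence contains $[a-1,b)$ or $[a,b+1)$ as a sub-interval. Down-closedness of the support then turns the condition $B = C = 0$ into the statement that no strict super-interval lies in the support, so combined with $A = 1$ we recover maximality of $[a,b)$; the reverse implication is immediate since maximality forces $B = C = D = 0$.

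For the $-1$ characterization, start from $(A,B,C,D) = (1,1,1,0)$. Since $[a-1,b)$ lies in the finite support, it is dominated by some maximal $M_1 = [a_1,b_1)$ with $a_1 \leq a-1$ and $b_1 \geq b$. One must have $b_1 = b$: otherwise $M_1 \supseteq [a-1,b+1)$ would force $D = 1$ by down-closedness. Symmetrically there is a maximal $M_2 = [a, b_2)$ with $b_2 \geq b+1$, and $M_1 \wedge M_2 = [\max(a_1,a), \min(b,b_2)) = [a,b)$. Moreover, an endpoint case analysis shows $\{M_1, M_2\}$ is the complete set of maximal elements dominating $[a,b)$: any other such maximal element would either have its endpoint spread force $D = 1$ or, by the uniqueness of a maximum with prescribed endpoint, coincide with $M_1$ or $M_2$. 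Conversely, if $[a,b)$ is the greatest lower bound of the two maximal elements dominating it, the same endpoint comparison (using that distinct maxima are incomparable) identifies them as $M_1 = [a_1,b), M_2 = [a,b_2)$ with $a_1 \leq a-1$ and $b_2 \geq b+1$; then $[a,b), [a-1,b), [a,b+1)$ all sit inside $M_1 \cup M_2$, giving $A = B = C = 1$, and ruling out any further maximal extension above $[a-1,b+1)$ gives $D = 0$.

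The routine work is the tuple enumeration; the delicate point is the converse of the $-1$ case, where the condition $D = 0$ must be extracted from the meet description. This is what pins down the correct reading of ``greatest lower bound of two maximal elements'' as referring to exactly the pair of maxima containing $[a,b)$ --- a third maximal element $N$ with $a_N \in (a_1, a-1]$ and $b_N \in [b+1, b_2)$ would a priori contradict neither maximality of $M_1, M_2$ nor their meet being $[a,b)$, and must be excluded by the stronger interpretation that the two maxima account for all the maximal structure above $[a,b)$, as in the examples of \cref{fig:graded_functions}.
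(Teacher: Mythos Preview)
Your approach matches the paper's: apply the M\"obius formula from \cref{cor:inversion}, use that $\Rank_k$ is order-reversing and $\{0,1\}$-valued to isolate the tuples $(1,0,0,0)$ and $(1,1,1,0)$, and then interpret these geometrically. The paper's argument is terser, and for the $-1$ case it only establishes the forward direction explicitly (from $(A,B,C,D)=(1,1,1,0)$ it constructs two maximal elements with meet $[a,b)$); the converse is not spelled out.

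You are right to flag the converse as the delicate point, and your concern is genuine. Read literally as ``$[a,b)$ is the meet of \emph{some} pair of maximal elements,'' the converse fails: if the maximal elements of the support of $\Rank_k$ are $[0,4)$, $[1,5)$, $[2,6)$ (realized, for example, by $k=1$ and these three bars), then $[2,4)=[0,4)\wedge[2,6)$ yet $\pd_k[2,4)=1-1-1+1=0$ because $[1,5)$ lies in the support. Your resolution---reading the hypothesis as ``exactly two maximal elements dominate $[a,b)$, and $[a,b)$ is their meet''---is the correct one and agrees with the structure in \cref{cor:gpd}, where the $-1$ points are precisely meets of \emph{consecutive} maxima in the staircase. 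The paper does not make this distinction explicit, so your more careful treatment of the converse actually fills a small gap rather than merely reproducing the argument.
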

	
	\begin{proof}
		Recall that $\pd_k[a,b) = \Rank_k[a,b) - \Rank_k[a-1,b) - \Rank_k[a,b+1) + \Rank_k[a-1,b+1)$. Since $\Rank_k$ is order-reversing, $\pd_k[a,b) = 1$ if and only if $\Rank_k[a,b)=1$ and $\Rank_k[a-1,b) = \Rank_k[a,b+1) = \Rank_k[a-1,b+1) = 0$.
		Similarly, $\pd_k[a,b)=-1$ if and only if $\Rank_k[a,b) = \Rank_k[a-1,b) = \Rank_k[a,b+1) = 1$ and $\Rank_k[a-1,b+1) = 0$.
		In the first case, $[a,b)$ is a maximal element of the support of $\Rank_k$. 
		In the second case, since $\Rank_k$ is order-reversing, $\Rank_k[a-i,b+1) = \Rank_k[a-1,b+i) = 0$ for all $i\geq 1$. Since the support of $\Rank_k$ is finite, it follows that $[a,b)$ is the greatest lower bound (i.e. meet) of two maximal elements in the support of $\Rank_k$.
	\end{proof}
	
	It follows from this proposition that we can write the $k$-th graded persistence diagram, $\pd_k$, as a signed sum of indicator functions on half-open intervals (\cref{fig:gpd}). To simplify the notation we will denote the indicator function on a half-open interval by the corresponding half-open interval.
	Consider the graph with vertices  the half-open intervals in the support of $\pd_k$ and with edges between greatest lower bounds and maximal elements in $[m+1]^2_<$ of the support of $\Rank_k$.
	Then the support of $\pd_k$ can be partitioned according the connected components of this graph. Let $\ell$ denote the number of connected components. Order the connected components using the minimum of the coordinates of the vertices of each component. Let $m_i$ denote the number vertices in the $i$-th component which are maximal elements in the support of $\Rank_k$. Then there are $m_i - 1$ vertices in the $i$-th component which are greatest lower bounds of the $m_i$ maximal elements, for a total of $2m_i - 1$ vertices in the component.
	
	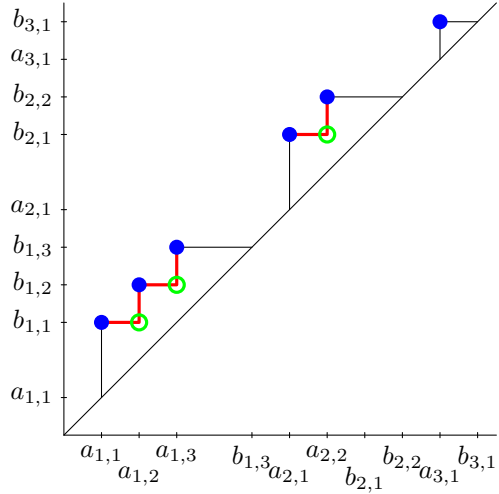
\begin{figure}
		\begin{center}
			\begin{tikzpicture}[line cap=round,line join=round,>=triangle 45,x=1.0cm,y=1.0cm,scale=0.5]
			\draw[color=black] (0,0) -- (11.5,0);
			\draw[color=black] (0,0) -- (0,11.5);
			\draw[color=black] (0,0) -- (11.5,11.5);
			
			\foreach \x in {1,2,3,5,6,7,8,9,10,11}
			\draw[shift={(\x,0)}] (0pt,2pt) -- (0pt,-2pt);
			\draw[shift={(1,0)}] node[below] {\footnotesize $a_{1,1}$};
			\draw[shift={(2,0)}] node[below,inner sep=1em] {\footnotesize $a_{1,2}$};
			\draw[shift={(3,0)}] node[below] {\footnotesize $a_{1,3}$};
			\draw[shift={(5,0)}] node[below] {\footnotesize $b_{1,3}$};
			\draw[shift={(6,0)}] node[below,inner sep=1em] {\footnotesize $a_{2,1}$};
			\draw[shift={(7,0)}] node[below] {\footnotesize $a_{2,2}$};
			\draw[shift={(8,0)}] node[below,inner sep=1em] {\footnotesize $b_{2,1}$};
			\draw[shift={(9,0)}] node[below] {\footnotesize $b_{2,2}$};
			\draw[shift={(10,0)}] node[below,inner sep=1em] {\footnotesize $a_{3,1}$};
			\draw[shift={(11,0)}] node[below] {\footnotesize $b_{3,1}$};
			
			\foreach \y in {1,3,4,5,6,8,9,10,11}
			\draw[shift={(0,\y)},color=black] (2pt,0pt) -- (-2pt,0pt);
			\draw[shift={(0,1)}] node[left] {\footnotesize $a_{1,1}$};
			\draw[shift={(0,3)}] node[left] {\footnotesize $b_{1,1}$};
			\draw[shift={(0,4)}] node[left] {\footnotesize $b_{1,2}$};
			\draw[shift={(0,5)}] node[left] {\footnotesize $b_{1,3}$};
			\draw[shift={(0,6)}] node[left] {\footnotesize $a_{2,1}$};
			\draw[shift={(0,8)}] node[left] {\footnotesize $b_{2,1}$};
			\draw[shift={(0,9)}] node[left] {\footnotesize $b_{2,2}$};
			\draw[shift={(0,10)}] node[left] {\footnotesize $a_{3,1}$};
			\draw[shift={(0,11)}] node[left] {\footnotesize $b_{3,1}$};
			
			\draw (1,1) -- (1,3) -- (2,3) -- (2,4) -- (3,4) -- (3,5) -- (5,5);
			\draw (6,6) -- (6,8) -- (7,8) -- (7,9) -- (9,9);
			\draw (10,10) -- (10,11) -- (11,11);
			\draw [very thick,red] (1,3) -- (2,3) -- (2,4) -- (3,4) -- (3,5);
			\draw [very thick,red] (6,8) -- (7,8) -- (7,9);
			
			\fill [color=blue] (1,3) circle (2mm);
			\draw [very thick,green=blue] (2,3) circle (2mm);
			\fill [color=blue] (2,4) circle (2mm);
			\draw [very thick,green=blue] (3,4) circle (2mm);
			\fill [color=blue] (3,5) circle (2mm);
			\fill [color=blue] (6,8) circle (2mm);
			\draw [very thick,green=blue] (7,8) circle (2mm);
			\fill [color=blue] (7,9) circle (2mm);
			\fill [color=blue] (10,11) circle (2mm);
			\end{tikzpicture}
		\end{center}
		\caption{An example of a $k$-th graded persistence diagram, $\pd_k$. Dark blue disks indicate where $\pd_k$ evaluates to 1 and light green circles indicate where $\pd_k$ evaluates to -1. Thick red lines are the edges of a graph whose vertices are the support of $\pd_k$. Vertices are labeled using the notation in \cref{cor:gpd} with $\ell=3$.} \label{fig:gpd}
	\end{figure}

	\begin{corollary} \label{cor:gpd}
		Let $\pd$ be a persistence diagram with corresponding $k$-th graded persistence diagram $\pd_k$. Then there exist $\ell \geq 0$, $m_1,\ldots,m_{\ell} \geq 1$,
		$a_{1,1}, \ldots, a_{1,m_1}, a_{2,1}, \ldots, a_{2,m_2}, \ldots, a_{\ell,1}, \ldots, a_{\ell,m_{\ell}} \in \mathbb{R}$,
		$b_{1,1}, \ldots, b_{1,m_1}, b_{2,1}, \ldots, b_{2,m_2}, \ldots, b_{\ell,1},\ldots,b_{\ell,m_{\ell}} \in \mathbb{R}$, (depending on $k$)
		such that
		\begin{equation} \label{eq:gpd}
			\begin{split}
				\pd_k &= [a_{1,1},b_{1,1}) - [a_{1,2},b_{1,1}) + [a_{1,2},b_{1,2}) - \cdots + [a_{1,m_1},b_{1,m_1}) \\
				& + [a_{2,1},b_{2,1}) - [a_{2,2},b_{2,1}) + [a_{2,2},b_{2,2}) - \cdots + [a_{2,m_2},b_{2,m_2}) + \cdots \\
				& + [a_{\ell,1},b_{\ell,1}) - [a_{\ell,2},b_{\ell,1}) + [a_{\ell,2},b_{\ell,2}) - \cdots + [a_{\ell,m_\ell},b_{\ell,m_\ell}),
			\end{split}
		\end{equation}
		the inequalities $a_{1,1} < a_{1,2}< \cdots < a_{1,m_1} < a_{2,1} < a_{2,2} < \cdots < a_{\ell,m_\ell}$ and
		$b_{1,1} < b_{1,2}< \cdots < b_{1,m_1} < b_{2,1} < b_{2,2} < \cdots < b_{\ell,m_\ell}$ hold,
		and for $1 \leq i \leq \ell,$ the inequalities
		$a_{i,1} < b_{i,1}$
		and
		$a_{i,j+1} < b_{i,j}$ hold for $1 \leq j \leq m_i-1$.
		Also for $1 \leq i \leq \ell -1$, $b_{i,m_i} \leq a_{i+1,1}$ holds.
		Furthermore, each $a_{i,j}$ is the first coordinate of an element in $\pd$ and
		each $b_{i,j}$ is the second coordinate of an element in $\pd$.
	\end{corollary}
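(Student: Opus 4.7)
The plan is to apply \cref{prop:pdk} to enumerate the $\pm 1$ points of $\pd_k$ and then organize them according to the graph described in the paragraph preceding the corollary. Since $\Rank_k$ is order-reversing with values in $\{0,1\}$, its support $S_k$ is a finite down-set in $\dgm$. By \cref{prop:pdk}, $\pd_k$ takes the value $+1$ exactly on the maximal elements of $S_k$, and the value $-1$ on certain greatest lower bounds of pairs of such maximal elements. Two distinct maximal elements of $S_k$ must be $\subset$-incomparable, so if $[a,b)$ and $[a',b')$ are maximal with $a<a'$ then necessarily $b<b'$. Hence the maximal elements inherit a total ordering in which both coordinates strictly increase simultaneously, and the meet of two consecutive ones $[a,b)$ and $[a',b')$ is $[a',b)$, which is a valid half-open interval precisely when $a'<b$, yielding the in-component inequality $a_{i,j+1}<b_{i,j}$.

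Next I would analyze the graph $G$ defined just before the corollary statement. The key observation is that each connected component of $G$ is a chain: a maximal block of maximal elements $[a_{i,1},b_{i,1}),\ldots,[a_{i,m_i},b_{i,m_i})$ consecutive in the total order above, interleaved with their $m_i-1$ consecutive meets $[a_{i,j+1},b_{i,j})$, giving $2m_i-1$ vertices and accounting for every $\pm 1$ point in the component. Labelling components in increasing order of smallest first coordinate and reading off the $\pm 1$ values of $\pd_k$ at each vertex yields the signed-sum expression \eqref{eq:gpd}. The separation inequality $b_{i,m_i}\le a_{i+1,1}$ between consecutive components holds because otherwise $[a_{i+1,1},b_{i,m_i})$ would be a valid interval in $S_k$, no other maximal element of $S_k$ would contain $[a_{i+1,1}-1,b_{i,m_i}+1)$ (by the component ordering), and so \cref{prop:pdk} would make this meet a $-1$ vertex joining the two components, a contradiction. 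The within-component and between-component inequalities together give the claimed global orderings on the $a_{i,j}$ and $b_{i,j}$.

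Finally, to see that each $a_{i,j}$ is a first coordinate of some point of $\pd$: since $[a_{i,j},b_{i,j})$ is maximal in $S_k$, we have $\Rank[a_{i,j},b_{i,j})\ge k$ while $\Rank[a_{i,j}-1,b_{i,j})<k$. Using $\Rank=\zeta*\pd$, this strict drop forces some point $[a,b)\in\pd$ with $a=a_{i,j}$ and $b\ge b_{i,j}$; a symmetric argument on the death side realizes each $b_{i,j}$ as a second coordinate of a point in $\pd$, and the negative vertices $[a_{i,j+1},b_{i,j})$ reuse these same coordinates. The main technical obstacle is the claim that components of $G$ are chains, i.e., that the meet of two non-consecutive maximal elements cannot be a $-1$ point: this is because any intermediate maximal element $[a',b')$ (with $a'$ strictly between the two first coordinates and $b'$ strictly between the two second coordinates) forces $\Rank_k[a-1,b+1)=1$ at their meet, making $\pd_k$ vanish there by \cref{prop:pdk}, so such a meet never appears in $G$.
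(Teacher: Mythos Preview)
Your proposal is correct and follows exactly the approach the paper intends: the paper does not give a separate proof of this corollary, treating it as an immediate consequence of \cref{prop:pdk} together with the graph/connected-component description in the paragraph just before the statement. Your write-up simply spells out the details the paper leaves implicit --- the total ordering of maximal elements of $S_k$, why only consecutive meets appear as $-1$ points, the contradiction giving $b_{i,m_i}\le a_{i+1,1}$, and the $\Rank=\zeta*\pd$ argument for the ``furthermore'' clause --- all of which are straightforward and accurate.
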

	
	\begin{remark}\label{rem:cont-gpd}
		Suppose $\overline{M}:\R\to\vect_K$ corresponds to $M:[m]\to\vect_K$ via the order-preserving and injective map $\iota:[m+1]\to\R$ as in \cref{sec:discrete-cont}. By \cref{def:continuous_diagrams} it follows that \cref{prop:pdk} and \cref{cor:gpd} hold for $\pd_k(\overline{M})$.
	\end{remark}
	
	\section{Persistence landscape and derivative persistence landscape}
	\label{sec:pl-dpl}
	
	Using the graded persistence diagram we easily obtain the persistence landscape, its derivative, and its basic properties.
	
	\subsection{The persistence landscape}
	\label{sec:pl}

	Let $\mr_+$ be the subset of $\mr$ given by $\mr_+ = \{x \in \mr \ | \ x >0\}$, and let $\R_+ = (\mr_+, \leq)$ be the corresponding sub-poset of $\R = (\mr,\leq)$.
	For $t \in \mr$, let $\iota_t: \mr_+ \to \mr^2_<$ be given by $h \mapsto [t-h,t+h)$.
	This gives an inclusion of posets $\iota_t:\R_+ \hookrightarrow \R^2_<$.
	It follows that the composition $\Rank_k \circ \iota_t$ is an order-reversing map from $\R_+$ to $(\{0,1\},\leq)$. 
	
	\begin{definition} \label{def:pl}
		Given a persistence module $M$  indexed by $[m]$ and an order-preserving and injective map $\iota:[m+1]\to\R$ let $\overline{M}$ be persistence module indexed by $\R$ corresponding to $M$ via $\iota$ as in \cref{sec:discrete-cont}. We define the \emph{persistence landscape} of $\overline{M}$ to be given by the following. For $k \geq 1$ and $t \in \mr$,
		let $\lambda_k(t) = \sup \{ h>0 \ | \ \Rank_k(\overline{M}) \iota_t (h) = 1 \}$, where $\lambda_k(t) = 0$ if this set is empty.
	\end{definition}
	
	Observe that
	\begin{align*}
		\lambda_k(t) &= \sup( h>0 \mid \Rank_k(\overline{M}) \iota_t(h) = 1)\\
		&= \sup ( h>0 \mid \Rank(\overline{M})[t-h,t+h) \geq k) \\
		&= \sup ( h>0 \mid \rank(\overline{M})[t-h,t+h] \geq k)\text{.}
	\end{align*}
	So \cref{def:pl} agrees with the definition in \cref{sec:pl-background}.

	\subsection{Properties of the persistence landscape}
	\label{sec:properties}
	
	By \cref{def:pl}, as $t$ varies, $[t-\lambda_k(t),t+\lambda_k(t))$ traces out the boundary of the support of $\Rank_k$.
	
	\begin{theorem} \label{thm:properties-pl}
		\begin{enumerate}
			\item $\lambda_k$ is a continuous piecewise-linear function.
			\item The value $\lambda_k(t)$, denoted $h$, is a local maximum at $t$ if and only if $\pd_k[t-h,t+h) = 1$. $\lambda_k(t)$ is a local minimum at $t$ if and only if $\pd_k[t-h,t+h) = -1$.
			\item If $\lambda_k'(t)$ exists then $\lambda_k'(t) \in \{-1,0,1\}$ and $\lambda_k'(t) = 0$ implies that $\lambda_k(t)=0$.
		\end{enumerate}
	\end{theorem}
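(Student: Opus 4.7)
The plan is to combine Corollary~\ref{cor:gpd} with an explicit ``max of tent functions'' formula for $\lambda_k$, which makes all three claims geometrically transparent. By \cref{prop:pdk}, the maximal elements of the support of $\Rank_k$ are exactly the half-open intervals $[a_{i,j},b_{i,j})$ at which $\pd_k = +1$ in the decomposition of \cref{cor:gpd}. Since the support of $\Rank_k$ is a down-set in $\R^2_<$, we have $\Rank_k[t-h,t+h)=1$ if and only if $[t-h,t+h) \subseteq [a_{i,j},b_{i,j})$ for some $(i,j)$, which is equivalent to $h \le \min(t-a_{i,j},\,b_{i,j}-t)$ with $a_{i,j} < t < b_{i,j}$. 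Taking the supremum yields
\[
\lambda_k(t) \;=\; \max_{1 \le i \le \ell,\; 1 \le j \le m_i}\; \bigl(\min(t-a_{i,j},\,b_{i,j}-t)\bigr)^{+},
\]
with the convention that the max is $0$ when every term is nonpositive.

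For part~(1) and~(3), each summand $T_{i,j}(t) := \bigl(\min(t-a_{i,j},\,b_{i,j}-t)\bigr)^{+}$ is a tent function supported on $[a_{i,j},b_{i,j}]$ with peak at $c_{i,j} = (a_{i,j}+b_{i,j})/2$: it rises with slope $+1$, descends with slope $-1$, and is $0$ elsewhere. The pointwise maximum of finitely many such tents is continuous and piecewise linear with slopes drawn from $\{-1,0,+1\}$, and slope $0$ only occurs where the envelope is identically zero on a neighborhood, forcing $\lambda_k(t)=0$ there. Thus claims~(1) and~(3) follow immediately from the formula.

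For part~(2), I would first observe that a strict local maximum of $\lambda_k$ can only occur at the apex $t_0 = c_{i,j}$ of some tent $T_{i,j}$, and there $\lambda_k(t_0) = (b_{i,j}-a_{i,j})/2 =: h$ so that $[t_0-h,\,t_0+h) = [a_{i,j},b_{i,j})$. Maximality of this interval in the support of $\Rank_k$ forces $T_{i',j'}(t_0) < h$ for every $(i',j') \neq (i,j)$, because $T_{i',j'}(t_0) \ge h$ would give $[a_{i,j},b_{i,j}) \subseteq [a_{i',j'},b_{i',j'})$. Hence $t_0$ is a strict local maximum with $\pd_k[t_0-h,t_0+h)=1$, and every local maximum arises this way. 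Dually, a local minimum with $\lambda_k(t)>0$ can only occur where the descending branch of $T_{i,j}$ meets the ascending branch of $T_{i,j+1}$ within the same connected component of the support; solving $b_{i,j}-t = t-a_{i,j+1}$ gives $t = (a_{i,j+1}+b_{i,j})/2$ with value $h = (b_{i,j}-a_{i,j+1})/2$, so $[t-h,\,t+h) = [a_{i,j+1},b_{i,j})$, which by \cref{cor:gpd} is precisely a $-1$ point of $\pd_k$. The separation $b_{i,m_i} \le a_{i+1,1}$ guarantees that $\lambda_k$ returns to $0$ between components, so no further local minima arise.

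The main obstacle is the converse directions in part~(2): namely, certifying that the apex of every maximal tent actually survives as a local maximum of the envelope, and that every inter-tent crossing within a component actually produces a local minimum (rather than being dominated by a third tent). Both amount to showing that along the descending-then-ascending arc between two consecutive apexes $c_{i,j}$ and $c_{i,j+1}$, no tent other than $T_{i,j}$ and $T_{i,j+1}$ exceeds $\max(T_{i,j},T_{i,j+1})$. This is exactly where the inequalities on the $a_{i,\cdot}$ and $b_{i,\cdot}$ supplied by \cref{cor:gpd} (ordered first and second coordinates, $a_{i,j+1} < b_{i,j}$, and the block separation between components) are doing the work, so the argument reduces to a careful but routine case analysis using those inequalities.
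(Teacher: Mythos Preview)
Your proposal is correct and follows essentially the same route as the paper: both rest on \cref{prop:pdk}/\cref{cor:gpd} and the fact that the support of $\Rank_k$ is the downward closure of its maximal elements, so that $[t-\lambda_k(t),\,t+\lambda_k(t))$ traces out the boundary of that support. The paper's proof is a one-line sketch invoking these ingredients, while your max-of-tents formula makes the same geometry explicit; the ``routine case analysis'' you flag at the end is precisely what the paper leaves to the reader.
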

	
	\begin{proof}
		Since $\Rank_k(\overline{M}) = \zeta * \pd_k(\overline{M})$, it follows that the support of $\Rank_k(\overline{M})$ equals the downward closure of the support of $\pd_k(\overline{M})$. Together with \cref{prop:pdk} and \cref{rem:cont-gpd}, we obtain the desired result.
	\end{proof}
	
	\subsection{The derivative of the persistence landscape}
	\label{sec:dpl}
	
	Write $\pd_k = \sum_{i=1}^n c_i [a_i,b_i)$, where $[a_i,b_i) \in \R^2_<$ and $c_i \in \{-1,1\}$. Let $m_i = \frac{a_i+b_i}{2}$. For $[a,b) \in \Dgm$, let $\chi_{(a,b)}$ denote the indicator function with domain $\mr$ of the subset $(a,b)$.
	
	\begin{definition} \label{def:rhok}
		Define the function $\rho_k: \mr \to \mr$ by 
		\begin{equation} \label{eq:rhok}
			\rho_k(t) = \sum_{i=1}^n c_i \left( \chi_{(a_i,m_i)} - \chi_{(m_i,b_i)} \right).
		\end{equation}
	\end{definition}
	
	First we simplify \eqref{eq:rhok} in a basic example. See \cref{fig:rhok1}. 
	
	\begin{lemma} \label{lem:rhok1}
		If $\pd_k=[a_1,b_1) - [a_2,b_2) + [a_3,b_3)$ and both $a_1 < a_2=a_3$ and $b_1=b_2 < b_3$ hold, then $\rho_k(t) = \chi_{(a_1,m_1)} - \chi_{(m_1,m_2)} + \chi_{(m_2,m_3)} - \chi_{(m_3,b_3)}$.
	\end{lemma}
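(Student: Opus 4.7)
The plan is to plug $n=3$, $c_1=1, c_2=-1, c_3=1$ directly into \eqref{eq:rhok} and then combine terms using the two hypotheses $a_2=a_3$ and $b_1=b_2$. Writing out the definition gives
\[
\rho_k = \chi_{(a_1,m_1)} - \chi_{(m_1,b_1)} - \chi_{(a_2,m_2)} + \chi_{(m_2,b_2)} + \chi_{(a_3,m_3)} - \chi_{(m_3,b_3)},
\]
and the goal is to show this rearranges to the four-term expression in the statement.

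First I will verify the strict inequalities $m_1 < m_2 < m_3$, which follow from averaging: since $a_1 < a_2$ and $b_1 = b_2$, we get $m_1 < m_2$; since $a_2 = a_3$ and $b_2 < b_3$, we get $m_2 < m_3$. Similarly $m_1 < b_1$ and $m_2 < b_2 = b_1$ imply the relevant nesting of the ``right-half'' intervals, while $a_2 < m_2$ and $a_3 = a_2 < m_3$ give the analogous nesting on the left. Then I will group the three ``right-half'' terms whose right endpoint is $b_1=b_2$: on the common support, $-\chi_{(m_1,b_1)} + \chi_{(m_2,b_2)}$ collapses to $-\chi_{(m_1,m_2)}$ because the two indicators agree on $(m_2,b_1)$. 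Symmetrically, the three ``left-half'' terms whose left endpoint is $a_2=a_3$ combine as $-\chi_{(a_2,m_2)} + \chi_{(a_3,m_3)} = +\chi_{(m_2,m_3)}$, since the two indicators agree on $(a_2,m_2)$. The remaining $\chi_{(a_1,m_1)}$ and $-\chi_{(m_3,b_3)}$ terms are untouched, and assembling the four surviving indicators yields exactly the claimed formula.

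The only subtlety is the endpoint convention: because $\chi_{(a,b)}$ is the indicator of the \emph{open} interval, the cancellations above leave ambiguity at the isolated points $m_1, m_2, m_3$ (e.g.\ $-\chi_{(m_1,b_1)} + \chi_{(m_2,b_2)}$ equals $-\chi_{(m_1,m_2]}$, not $-\chi_{(m_1,m_2)}$, strictly speaking). These finitely many points form a set of measure zero and do not affect $\rho_k$ in its intended role as the a.e.\ derivative of $\lambda_k$, so I will simply remark that the identity holds off of the finite set $\{a_1,m_1,m_2,m_3,b_3\}$, which is the sense in which equality is meant here. No deeper obstacle arises; the lemma is essentially a bookkeeping exercise made clean by the coincidences $a_2=a_3$ and $b_1=b_2$.
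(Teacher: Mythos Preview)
Your approach is exactly the paper's: expand \eqref{eq:rhok}, use $a_2=a_3$ and $b_1=b_2$, and collapse pairs of indicators. The one point to correct is your ``endpoint subtlety'' hedge. You observe correctly that $-\chi_{(m_1,b_1)}+\chi_{(m_2,b_2)}=-\chi_{(m_1,m_2]}$ and $-\chi_{(a_2,m_2)}+\chi_{(a_3,m_3)}=\chi_{[m_2,m_3)}$, but you then stop and declare the result only valid almost everywhere. In fact the paper's proof takes one more exact step: the sum $-\chi_{(m_1,m_2]}+\chi_{[m_2,m_3)}$ equals $-\chi_{(m_1,m_2)}+\chi_{(m_2,m_3)}$ \emph{pointwise}, because at $t=m_2$ both expressions vanish ($-1+1=0$ on the left, $0+0=0$ on the right). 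So the identity in the lemma holds exactly, not merely off a finite set, and no measure-zero disclaimer is needed.
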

	
	\begin{proof}
		From \cref{def:rhok} we have,
		\begin{align*}
			\rho_k(t)
			&= \chi_{(a_1,m_1)} - \chi_{(m_1,b_1)}
			- \chi_{(a_2,m_2)} + \chi_{(m_2,b_2)}
			+ \chi_{(a_3,m_3)} - \chi_{(m_3,b_3)}\\
			&= \chi_{(a_1,m_1)} - \chi_{(m_1,b_1)}
			- \chi_{(a_3,m_2)} + \chi_{(m_2,b_1)}
			+ \chi_{(a_3,m_3)} - \chi_{(m_3,b_3)}\\
			&= \chi_{(a_1,m_1)} - \chi_{(m_1,m_2]}
			+ \chi_{[m_2,m_3)} - \chi_{(m_3,b_3)}\\
			&= \chi_{(a_1,m_1)} - \chi_{(m_1,m_2)}
			+ \chi_{(m_2,m_3)} - \chi_{(m_3,b_3)}. \qedhere
		\end{align*}
	\end{proof}

	\begin{figure}
		\begin{center}
			\begin{tikzpicture}[line cap=round,line join=round,>=triangle 45,x=1.0cm,y=1.0cm,scale=0.5]
			\draw[color=black] (0,0) -- (14,0);
			\draw[color=black] (1,0) -- (4,3) -- (7,0);
			\draw[color=black] (3,0) -- (8,5) -- (13,0);
			\foreach \x in {1,3,4,5,7,8,13}
			\draw[shift={(\x,0)}] (0pt,2pt) -- (0pt,-2pt);
			\draw[shift={(1,0)}] node[below] {\footnotesize $a_1$};
			\draw[shift={(3,0)}] node[below] {\footnotesize $a_2$};
			\draw[shift={(3,0)}] node[below,inner sep=1.5em] {\footnotesize $a_3$};
			\draw[shift={(4,0)}] node[below] {\footnotesize $m_1$};
			\draw[shift={(5,0)}] node[below] {\footnotesize $m_2$};
			\draw[shift={(7,0)}] node[below] {\footnotesize $b_1$};
			\draw[shift={(7,0)}] node[below,inner sep=1.5em] {\footnotesize $b_2$};
			\draw[shift={(8,0)}] node[below] {\footnotesize $m_3$};
			\draw[shift={(13,0)}] node[below] {\footnotesize $b_3$};
			\fill [color=blue] (4,3) circle (2mm);
			\draw [very thick,green=blue] (5,2) circle (2mm);
			\fill [color=blue] (8,5) circle (2mm);
			\end{tikzpicture}
			\quad
			\begin{tikzpicture}[line cap=round,line join=round,>=triangle 45,x=1.0cm,y=1.0cm,scale=0.5]
			\draw[color=black] (0,0) -- (14,0);
			\foreach \x in {1,3,4,5,7,8,13}
			\draw[shift={(\x,0)}] (0pt,2pt) -- (0pt,-2pt);
			\draw[shift={(1,0)}] node[below] {\footnotesize $a_1$};
			\draw[shift={(3,0)}] node[below] {\footnotesize $a_2$};
			\draw[shift={(3,0)}] node[below,inner sep=1.5em] {\footnotesize $a_3$};
			\draw[shift={(4,0)}] node[below] {\footnotesize $m_1$};
			\draw[shift={(5,0)}] node[below] {\footnotesize $m_2$};
			\draw[shift={(7,0)}] node[below] {\footnotesize $b_1$};
			\draw[shift={(7,0)}] node[below,inner sep=1.5em] {\footnotesize $b_2$};
			\draw[shift={(8,0)}] node[below] {\footnotesize $m_3$};
			\draw[shift={(13,0)}] node[below] {\footnotesize $b_3$};
			\draw[dashed,thick] (1,2.2) -- (4,2.2);
			\draw[dashed,thick] (4,-2.2) -- (7,-2.2);
			\draw[dotted,thick] (3,-2.4) -- (5,-2.4);
			\draw[dotted,thick] (5,2.2) -- (7,2.2);
			\draw[dashdotted,thick] (3,2.4) -- (8,2.4);
			\draw[dashdotted,thick] (8,-2.2) -- (13,-2.2);
			\draw[thick] (1,2) -- (4,2);
			\draw[thick] (4,-2) -- (5,-2);
			\draw[thick] (5,2) -- (8,2);
			\draw[thick] (8,-2) -- (13,-2);
			\end{tikzpicture}
		\end{center}
		\caption{An example for \cref{def:rhok} and \cref{lem:rhok1}. On the left we have a $k$-th graded persistence diagram, $\pd_k$, which has been rotated clockwise by $\frac{\pi}{4}$. Dark blue disks indicate where $\pd_k$ evaluates to 1 and the light green circle indicates where $\pd_k$ evaluates to -1. On the right we have the graph of terms in the right hand side of \eqref{eq:rhok}, where the first, second, and third terms are dashed, dotted, and dash-dotted, respectively. The graph of their sum, $\rho_k$, is solid. Note that the graphs have been shifted slightly in the vertical direction for ease of visualization.} 
		\label{fig:rhok1}
	\end{figure}
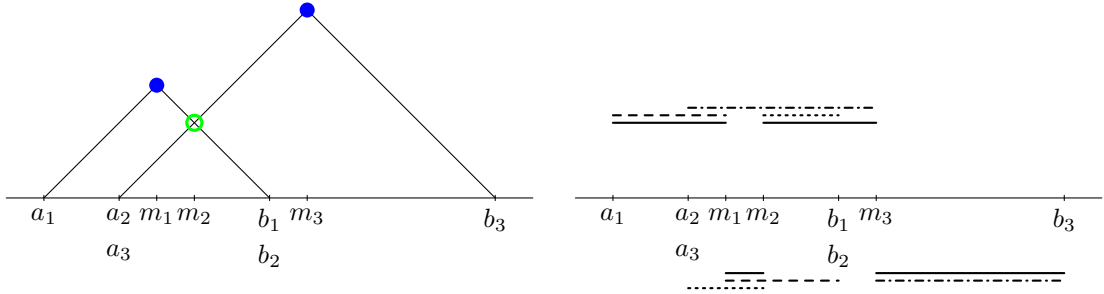

	Next we simplify \eqref{eq:rhok} in a more general example. See \cref{fig:rhok2}.
	
	\begin{lemma} \label{lem:rhok2}
		If $\pd_k = [a_1,b_1) - [a_2,b_2) + [a_3,b_3) - \cdots - [a_{2n},b_{2n}) + [a_{2n+1},b_{2n+1})$ and for $1 \leq i \leq n$ both $a_{2i-1} < a_{2i}=a_{2i+1}$ and $b_{2i-1}=b_{2i} < b_{2i+1}$ hold, then
		\begin{equation*}
			\rho_k(t) = \chi_{(a_1,m_1)} - \chi_{(m_1,m_2)} + \chi_{(m_2,m_3)} - \chi_{(m_3,m_4)} + \cdots + \chi_{(m_{2n},m_{2n+1})} - \chi_{(m_{2n+1},b_{2n+1})}.
		\end{equation*}
	\end{lemma}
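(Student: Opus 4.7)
The approach is induction on $n$, with base case $n=1$ being exactly \cref{lem:rhok1}. The definition \eqref{eq:rhok} is linear in the signed indicator functions comprising $\pd_k$, which is what makes the induction go through cleanly.

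For the inductive step from $n-1$ to $n$, I would split
\[
\pd_k = \Bigl(\sum_{i=1}^{2n-1} (-1)^{i+1}[a_i,b_i)\Bigr) - [a_{2n},b_{2n}) + [a_{2n+1},b_{2n+1}),
\]
apply the inductive hypothesis to the first summand (whose $2n-1$ terms satisfy the hypotheses for $n-1$), and then combine the resulting expression with the $\rho_k$ contribution of $-[a_{2n},b_{2n}) + [a_{2n+1},b_{2n+1})$. Everything to the left of the trailing $-\chi_{(m_{2n-1},b_{2n-1})}$ in the inductive formula already matches the target, so the task reduces to verifying the local identity
\begin{align*}
&-\chi_{(m_{2n-1},b_{2n-1})} - \bigl(\chi_{(a_{2n},m_{2n})} - \chi_{(m_{2n},b_{2n})}\bigr) + \bigl(\chi_{(a_{2n+1},m_{2n+1})} - \chi_{(m_{2n+1},b_{2n+1})}\bigr) \\
&\qquad = -\chi_{(m_{2n-1},m_{2n})} + \chi_{(m_{2n},m_{2n+1})} - \chi_{(m_{2n+1},b_{2n+1})}.
\end{align*}

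This identity is proved by the same endpoint bookkeeping as in the proof of \cref{lem:rhok1}: the relation $b_{2n-1}=b_{2n}$ combines $-\chi_{(m_{2n-1},b_{2n-1})} + \chi_{(m_{2n},b_{2n})}$ into $-\chi_{(m_{2n-1},m_{2n}]}$; the relation $a_{2n}=a_{2n+1}$ combines $-\chi_{(a_{2n},m_{2n})} + \chi_{(a_{2n+1},m_{2n+1})}$ into $\chi_{[m_{2n},m_{2n+1})}$; and the two contributions at the single point $m_{2n}$ cancel, producing open intervals on both sides. The hypotheses $a_{2n-1}<a_{2n}=a_{2n+1}$ and $b_{2n-1}=b_{2n}<b_{2n+1}$ force $m_{2n-1}<m_{2n}<m_{2n+1}$, so every subinterval appearing is nonempty and correctly ordered. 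The main obstacle, to the extent there is one, is the careful tracking of half-open endpoints, and this is entirely analogous to the base case.
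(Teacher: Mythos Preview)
Your proof is correct. The induction step reduces to exactly the same indicator-function bookkeeping that the paper performs, and your endpoint tracking (combining $-\chi_{(m_{2n-1},b_{2n-1})}+\chi_{(m_{2n},b_{2n})}$ into $-\chi_{(m_{2n-1},m_{2n}]}$ via $b_{2n-1}=b_{2n}$, and $-\chi_{(a_{2n},m_{2n})}+\chi_{(a_{2n+1},m_{2n+1})}$ into $\chi_{[m_{2n},m_{2n+1})}$ via $a_{2n}=a_{2n+1}$, with the overlap at $m_{2n}$ cancelling) is exactly right.

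The only difference from the paper is organizational: the paper writes out \eqref{eq:rhok} in full, substitutes all the relations $a_{2i}=a_{2i+1}$ and $b_{2i-1}=b_{2i}$ at once, and collapses every adjacent pair simultaneously, whereas you peel off two terms at a time by induction with \cref{lem:rhok1} as base case. Your version has the advantage of isolating and reusing the single local cancellation; the paper's version is slightly shorter but relies on the reader tracking the pattern in the ellipsis. Neither approach requires anything the other does not.
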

	
	\begin{proof}
		From \cref{def:rhok} we have
		\begin{align*}
			\rho_k(t)
			&= \chi_{(a_1,m_1)} - \chi_{(m_1,b_1)}
			- \chi_{(a_2,m_2)} + \chi_{(m_2,b_2)} + \cdots
			+ \chi_{(a_{2n+1},m_{2n+1})} - \chi_{(m_{2n+1},b_{2n+1})}\\
			&= \chi_{(a_1,m_1)} - \chi_{(m_1,b_1)}
			- \chi_{(a_3,m_2)} + \chi_{(m_2,b_1)} + \cdots
			+ \chi_{(a_{2n+1},m_{2n+1})} - \chi_{(m_{2n+1},b_{2n+1})}\\
			&= \chi_{(a_1,m_1)} - \chi_{(m_1,m_2]} + \cdots
			+ \chi_{[m_{2n},m_{2n+1})} - \chi_{(m_{2n+1},b_{2n+1})} \\
			&= \chi_{(a_1,m_1)} - \chi_{(m_1,m_2)} + \cdots
			+ \chi_{(m_{2n},m_{2n+1})} - \chi_{(m_{2n+1},b_{2n+1})} \qedhere
		\end{align*}
	\end{proof}
	
	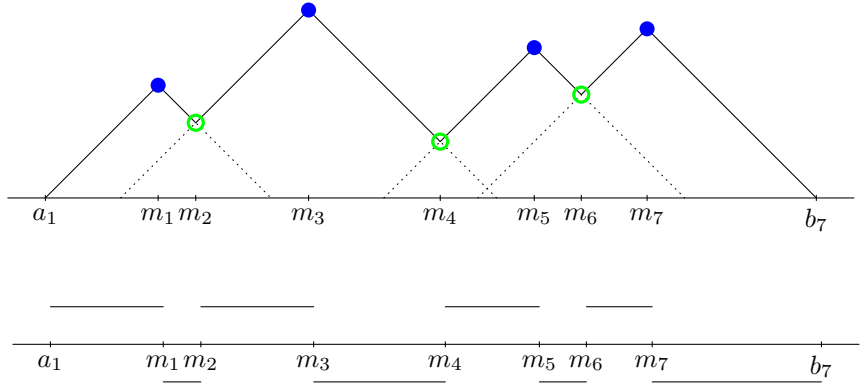
\begin{figure}
		\begin{center}
			\begin{tikzpicture}[line cap=round,line join=round,>=triangle 45,x=1.0cm,y=1.0cm,scale=0.5]
			\draw[color=black] (0,0) -- (22.5,0);
			\draw (1,0) -- (4,3) -- (5,2) -- (8,5) -- (11.5,1.5) -- (14,4) -- (15.25,2.75) -- (17,4.5) -- (21.5,0);
			\draw[dotted] (3,0) -- (5,2) -- (7,0);
			\draw[dotted] (10,0) -- (11.5,1.5) -- (13,0);
			\draw[dotted] (12.5,0) -- (15.25,2.75) -- (18,0);
			\foreach \x in {1,4,5,8,11.5,14,15.25,17,21.5}
			\draw[shift={(\x,0)}] (0pt,2pt) -- (0pt,-2pt);
			\draw[shift={(1,0)}] node[below] {\footnotesize $a_1$};
			\draw[shift={(4,0)}] node[below] {\footnotesize $m_1$};
			\draw[shift={(5,0)}] node[below] {\footnotesize $m_2$};
			\draw[shift={(8,0)}] node[below] {\footnotesize $m_3$};
			\draw[shift={(11.5,0)}] node[below] {\footnotesize $m_4$};
			\draw[shift={(14,0)}] node[below] {\footnotesize $m_5$};
			\draw[shift={(15.25,0)}] node[below] {\footnotesize $m_6$};
			\draw[shift={(17,0)}] node[below] {\footnotesize $m_7$};
			\draw[shift={(21.5,0)}] node[below] {\footnotesize $b_7$};
			\fill [color=blue] (4,3) circle (2mm);
			\draw [very thick,green=blue] (5,2) circle (2mm);
			\fill [color=blue] (8,5) circle (2mm);
			\draw [very thick,green=blue] (11.5,1.5) circle (2mm);
			\fill [color=blue] (14,4) circle (2mm);
			\draw [very thick,green=blue] (15.25,2.75) circle (2mm);
			\fill [color=blue] (17,4.5) circle (2mm);
			\end{tikzpicture}
			\vspace{2em}
			
			\begin{tikzpicture}[line cap=round,line join=round,>=triangle 45,x=1.0cm,y=1.0cm,scale=0.5]
			\draw[color=black] (0,0) -- (22.5,0);
			\foreach \x in {1,4,5,8,11.5,14,15.25,17,21.5}
			\draw[shift={(\x,0)}] (0pt,2pt) -- (0pt,-2pt);
			\draw[shift={(1,0)}] node[below] {\footnotesize $a_1$};
			\draw[shift={(4,0)}] node[below] {\footnotesize $m_1$};
			\draw[shift={(5,0)}] node[below] {\footnotesize $m_2$};
			\draw[shift={(8,0)}] node[below] {\footnotesize $m_3$};
			\draw[shift={(11.5,0)}] node[below] {\footnotesize $m_4$};
			\draw[shift={(14,0)}] node[below] {\footnotesize $m_5$};
			\draw[shift={(15.25,0)}] node[below] {\footnotesize $m_6$};
			\draw[shift={(17,0)}] node[below] {\footnotesize $m_7$};
			\draw[shift={(21.5,0)}] node[below] {\footnotesize $b_7$};
			\draw (1,1) -- (4,1);
			\draw (4,-1) -- (5,-1);
			\draw (5,1) -- (8,1);
			\draw (8,-1) -- (11.5,-1);
			\draw (11.5,1) -- (14,1);
			\draw (14,-1) -- (15.25,-1);
			\draw (15.25,1) -- (17,1);
			\draw (17,-1) -- (21.5,-1);
			\end{tikzpicture}
		\end{center}
		\caption{An example for \cref{def:rhok} and \cref{lem:rhok2}. Above, we have a rotated $k$-th graded persistence diagram. Below, we have the graph of the corresponding function, $\rho_k$. Above, we also have the graph of the $k$-th persistence landscape, $\lambda_k$ (solid). We see that the derivative of $\lambda_k$ is $\rho_k$ (\cref{prop:rhok-lambdak}) and that $\lambda_k$ may be obtained by integrating $\rho_k$ (\cref{cor:dpl}).}
		\label{fig:rhok2}
	\end{figure}
	
	Finally we simplify \eqref{eq:rhok} in the general case.
	
	\begin{theorem} \label{prop:rhok-lambdak}
		The function $\rho_k$ is the derivative of the $k$-th persistence landscape. That is,
		\[
		\rho_k(t) =
		\begin{cases}
		\lambda_k'(t) & \text{if } \lambda_k'(t) \text{ is defined} \\
		0 & \text{ otherwise.}
		\end{cases}
		\]
	\end{theorem}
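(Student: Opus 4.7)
The plan is to reduce to the single-component case already handled in \cref{lem:rhok2}, using the decomposition provided by \cref{cor:gpd}. By \cref{cor:gpd} (and \cref{rem:cont-gpd} to pass to the continuous setting), write $\pd_k = \sum_{i=1}^{\ell} \pd_k^{(i)}$, where each piece
\[
\pd_k^{(i)} = [a_{i,1},b_{i,1}) - [a_{i,2},b_{i,1}) + [a_{i,2},b_{i,2}) - \cdots + [a_{i,m_i},b_{i,m_i})
\]
is exactly of the form treated by \cref{lem:rhok2} (after reindexing the $2m_i-1$ half-open intervals consecutively). Since both sides of the claimed equality are additive in the decomposition $\pd_k = \sum_i \pd_k^{(i)}$ ($\rho_k$ by \cref{def:rhok}, and $\lambda_k'$ because the supports of the $\rho_k^{(i)}$ are disjoint and we will see they match disjoint pieces of $\lambda_k$), it suffices to verify the equality on each connected component separately, plus verify that both functions vanish outside the union of the components' supports.

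Next I would extract the piecewise-linear structure of $\lambda_k$ on one component. By \cref{thm:properties-pl}, $\lambda_k$ is continuous and piecewise linear with slopes in $\{-1,0,1\}$, local maxima at the midpoints $m_{i,j} = (a_{i,j}+b_{i,j})/2$ of the positive terms with value $(b_{i,j}-a_{i,j})/2$, and local minima at the midpoints $m'_{i,j} = (a_{i,j+1}+b_{i,j})/2$ of the negative terms with value $(b_{i,j}-a_{i,j+1})/2$. A short slope-and-distance check shows that the line of slope $-1$ descending from the peak at $m_{i,j}$ meets the line of slope $+1$ ascending to the peak at $m_{i,j+1}$ precisely at the predicted valley point $m'_{i,j}$ with the predicted height (and that the outer slopes $+1$ from $a_{i,1}$ and $-1$ to $b_{i,m_i}$ start and end at height $0$). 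Consequently, on the component the restriction of $\lambda_k$ is identified with a sequence of $+1/-1$ ramps whose break points are exactly the midpoints in the order given by \cref{lem:rhok2}.

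Putting the pieces together: on the open intervals between consecutive break points inside component $i$, the derivative $\lambda_k'$ alternates $+1, -1, +1, \ldots$ starting on $(a_{i,1}, m_{i,1})$ and ending on $(m'_{i,m_i-1} \text{ or } m_{i,m_i}, b_{i,m_i})$ with $-1$, which matches term-by-term the expression for $\rho_k^{(i)}$ furnished by \cref{lem:rhok2}. On the complement of $\bigcup_i [a_{i,1}, b_{i,m_i}]$ the landscape $\lambda_k$ is identically $0$ (the support of $\Rank_k$ is the downward closure of the support of $\pd_k$, by \cref{thm:consistency} and the argument of \cref{thm:properties-pl}), so $\lambda_k'=0$ there; on the same set $\rho_k=0$ by inspection of \eqref{eq:rhok}. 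At the finitely many break points $\lambda_k'$ is undefined, which is exactly the exceptional case in the statement.

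The routine obstacle is bookkeeping: aligning the double-indexing $(i,j)$ from \cref{cor:gpd} with the consecutive indexing $1,\ldots,2m_i-1$ required by \cref{lem:rhok2}, and verifying that valleys between components (where the landscape genuinely returns to zero) are not mistaken for valleys within a component (where it does not). This distinction is precisely what the connected-component structure of the graph in \cref{cor:gpd} encodes, so the geometric check that $\lambda_k$ returns to $0$ exactly at $b_{i,m_i}$ --- and not before --- is the one spot that deserves care; it follows from the inequalities $a_{i,j+1} < b_{i,j}$ (ensuring valleys stay positive) together with $b_{i,m_i} \le a_{i+1,1}$ (ensuring no spurious connection between components).
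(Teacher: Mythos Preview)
Your proposal is correct and follows essentially the same approach as the paper: decompose $\pd_k$ into connected components (the paper does this via \cref{prop:pdk} and midpoint ordering, you via \cref{cor:gpd}, which amounts to the same thing), apply \cref{lem:rhok2} to each component to simplify $\rho_k$, and then identify the resulting alternating indicator sum with $\lambda_k'$. Your treatment is in fact more explicit than the paper's at the final step---the paper simply asserts that the sum of indicator functions ``is precisely $\lambda_k'$,'' whereas you spell out the slope-and-break-point verification via \cref{thm:properties-pl} and the inequalities from \cref{cor:gpd}.
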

	
	\begin{proof}
		Since the support of $\Rank_k$ is downwards closed, each midpoint $m_i$ is distinct.
		Order the points in the support of $\pd_k$ so that $m_1 < m_2 < \cdots < m_n$ holds.
		From \cref{prop:pdk} and \cref{rem:cont-gpd}, it follows that if $c_i=-1$ then $c_{i-1} = 1 =  c_{i+1}$, $b_{i-1} = b_i$ and $a_i = a_{i+1}$.
		
		Thus, we have that $(c_1,\ldots,c_n) = (c_1,\ldots,c_{j_1},c_{j_1+1},\ldots,c_{j_2},\ldots,c_{j_m})$ and for $0\leq k \leq m-1$ with $j_0=0$,
		$(c_{j_k+1},\ldots,c_{j_{k+1}}) = (1,-1,1,-1,1,\ldots,1)$.
		Therefore, by \cref{def:rhok} and \cref{lem:rhok2} we have the following.
		\begin{align*}
			\rho_k(t)
			&= \sum_{k=0}^{m-1} \sum_{i=j_k+1}^{j_{k+1}} c_i (\chi_{(a_i,m_i)} - \chi_{(m_i,b_i)})\\
			&= \sum_{k=0}^{m-1} \chi_{(a_{j_k+1},m_{j_k+1})} - \chi_{(m_{j_k+1},m_{j_k+2})} +
			\cdots + \chi_{(m_{j_{k+1}-1},m_{j_{k+1}})} - \chi_{(m_{j_{k+1}},b_{j_{k+1}})}\\
		\end{align*}
		This sum of indicator functions is precisely $\lambda'_k$ where $\lambda'_k$ is defined and is otherwise $0$.
	\end{proof}
	
	\begin{corollary} \label{cor:dpl}
		$\lambda_k(t) = \int_{-\infty}^t \rho_k(s) ds$
	\end{corollary}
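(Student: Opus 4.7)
The plan is to leverage \cref{prop:rhok-lambdak} together with the regularity of $\lambda_k$ established in \cref{thm:properties-pl} and then apply the fundamental theorem of calculus. Since the persistence module $\overline{M}$ arises from a persistence module indexed by $[m]$, its support is bounded, so $\lambda_k$ has compact support; in particular $\lambda_k(t) = 0$ for all sufficiently negative $t$.

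First, I would recall from \cref{thm:properties-pl} that $\lambda_k$ is continuous and piecewise-linear with pieces of slope in $\{-1,0,1\}$, which makes it Lipschitz and hence absolutely continuous on $\mr$. Therefore, for any $t \in \mr$, the fundamental theorem of calculus for absolutely continuous functions yields
\begin{equation*}
\lambda_k(t) - \lim_{s \to -\infty} \lambda_k(s) = \int_{-\infty}^{t} \lambda_k'(s)\, ds,
\end{equation*}
where $\lambda_k'$ is defined almost everywhere (everywhere except at the finitely many breakpoints of the piecewise-linear function). The limit on the left is $0$ because $\lambda_k$ has compact support.

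Next, I would invoke \cref{prop:rhok-lambdak}, which gives $\rho_k(t) = \lambda_k'(t)$ wherever the derivative exists and $\rho_k(t) = 0$ otherwise. Since the two functions agree except on a finite (hence measure-zero) set of breakpoints, their integrals coincide:
\begin{equation*}
\int_{-\infty}^{t} \rho_k(s)\, ds = \int_{-\infty}^{t} \lambda_k'(s)\, ds = \lambda_k(t).
\end{equation*}

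The only potential subtlety is the vanishing of $\lambda_k$ at $-\infty$, but this is immediate from the construction: by \cref{cor:gpd} and \cref{rem:cont-gpd}, the support of $\pd_k$ is finite, so the support of $\Rank_k$ is bounded in $\Dgm$, which forces $\lambda_k$ to vanish outside a bounded interval. There is no serious obstacle; the result is essentially a direct consequence of \cref{prop:rhok-lambdak} once absolute continuity and compact support of $\lambda_k$ are noted.
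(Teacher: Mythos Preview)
Your proposal is correct and follows essentially the same approach as the paper: the paper's proof simply notes that $\lambda_k$ and $\rho_k$ have bounded support and invokes \cref{prop:rhok-lambdak}, leaving the fundamental theorem of calculus step implicit. Your version just spells out the absolute continuity and the measure-zero discrepancy between $\rho_k$ and $\lambda_k'$ that the paper takes for granted.
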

	
	\begin{proof}
		Since $\lambda_k$ and $\rho_k$ have bounded support, the result follows from \cref{prop:rhok-lambdak}.
	\end{proof}

	\section{Wasserstein stability for graded persistence diagrams}
	\label{sec:wasserstein}
	
	In this section we define a Wasserstein distance for graded persistence diagrams and prove that the mapping from an ungraded persistence diagram to a graded persistence diagram is stable. Throughout we will continue to assume persistence diagrams and graded persistence diagrams are tame as detailed in \cref{sec:discrete-cont,sec:cont}.
	
	\subsection{Wasserstein distance for graded persistence diagrams}
	
	We start by recalling the Wasserstein distance for persistence diagrams. Let $\Delta = \{(x,x)\in\mr^2\}$ and let $p,q \in [1,\infty]$.
	
	\begin{definition} \label{def:coupling}
		Let $D,E:\Dgm\to\mz_{\geq 0}$ be persistence diagrams. A \emph{coupling} between $D$ and $E$ is a map $\gamma:(\Dgm\cup\Delta) \times (\Dgm\cup\Delta) \to \mz_{\geq 0}$ where
		$\gamma(z,w) = 0$ for all $(z,w) \in \Delta \times \Delta$ and
		for all $z\in\Dgm$
		\[ 
		D(z) = \sum_{w\in\Dgm \cup \Delta} \gamma(z,w)
		\]
		and for all $w\in\Dgm$
		\[
		E(w) = \sum_{z\in\Dgm\cup\Delta}\gamma(z,w)\text{.}
		\]
		Note that $\gamma$ is a multiset on $(\Dgm\cup\Delta) \times (\Dgm\cup\Delta)$. Since $\abs{D}$ and $\abs{E}$ are finite, so is $\abs{\gamma}$.
		The \emph{$(p,q)$-cost} of $\gamma$ is
		$\norm{\gamma}_{p,q} = \norm{\left(\norm{w-z}_q \ | \ (z,w) \in \gamma\right)}_p$. The notation treats $\gamma$ as a multiset and the elements of $\gamma$ are taken with multiplicity.
		That is, we take the $p$-norm of the vector whose entries consist of the distances in the $q$-norm between $z$ and $w$ for all pairs $(z,w)$ in the multiset $\gamma$.
	\end{definition}
	
	\begin{definition} \label{def:wasserstein}
		The \emph{$(p,q)$-Wasserstein distance}, $W_{p,q}$ between persistence diagrams $D$ and $E$ is given by
		\[ 
		W_{p,q}(D,E)=\inf\norm{\gamma}_{p,q}
		\]
		where the infimum is taken over all couplings of $D$ and $E$.
	\end{definition}
	
	\begin{remark} \label{rem:wasserstein}
		By our finiteness assumption on persistence modules our persistence diagrams have finite support. 
		Since we aim to minimize cost, we may assume that if $\gamma((x,y),(z,w)) \neq 0$ where $(x,y) \in \Dgm$ and $(z,w) \in \Delta$ then $(z,w) = (\frac{x+y}{2},\frac{x+y}{2})$.
		Similarly if $(x,y) \in \Delta$ and $(z,w) \in \Dgm$ then we may assume that $(x,y) = (\frac{z+w}{2},\frac{z+w}{2})$.
		Under this assumption and our finiteness assumption, there are only finitely many possible couplings.
		Since our persistence diagrams are finite multisets, they may be equivalently represented as finite indexed sets. That is $D = \{(x_i,y_i)\}_{i=1}^{m}$ and $E = \{(z_i,w_i)\}_{i=1}^{n}$, where for all $i$, $(x_i,y_i),(z_i,w_i) \in \Dgm$.
		For each coupling $\gamma$, let $r$ be the cardinality of $\gamma$ restricted to $\Dgm \times \Dgm$. 
		Then the cardinality of $\gamma$ restricted to $\Dgm \times \Delta$ is $m-r$, which we denote by $s$, and
		the cardinality of $\gamma$ restricted to $\Delta \times \Dgm$ is $n-r$, which we denote by $t$.
		Therefore, for each $\gamma$ we may choose an ordering of the indexed sets $D$ and $E$ such that 
		\begin{multline*}
			\norm{\gamma}_{p,q} = 
			\left\lVert \left( \norm{(x_1,y_1)-(z_1,w_1)}_q,\ldots,\norm{(x_{r},y_{r})-(z_{r},w_{r})}_q,\right. \right. \\
			\norm{(x_{r+1},y_{r+1})-(\tfrac{x_{r+1}+y_{r+1}}{2},\tfrac{x_{r+1}+y_{r+1}}{2})}_q,\ldots,\norm{(x_{r+s},y_{r+s})-(\tfrac{x_{r+s}+y_{r+s}}{2},\tfrac{x_{r+s}+y_{r+s}}{2})}_q,\\
			\left. \left.
			\norm{(z_{r+1},w_{r+1})-(\tfrac{z_{r+1}+w_{r+1}}{2},\tfrac{z_{r+1}+w_{r+1}}{2})}_q,\ldots,\norm{(z_{r+t},w_{r+t})-(\tfrac{z_{r+t}+w_{r+t}}{2},\tfrac{z_{r+t}+w_{r+t}}{2})}_q \right) \right\rVert_p.
		\end{multline*}
	\end{remark}
	One may check that this definition agrees with the typical definition (e.g. \cite[pp.216,219--220]{HarerBook}) upon viewing persistence diagrams as multisets and couplings as matchings. The following is straightforward to check from the definition.
	
	\begin{proposition}
		The $(p,q)$-Wasserstein distance is a metric for persistence diagrams.
	\end{proposition}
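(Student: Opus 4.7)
The plan is to verify the three metric axioms: non-negativity with identity of indiscernibles, symmetry, and the triangle inequality. Non-negativity is immediate because $\|\gamma\|_{p,q}$ is a norm of a vector of non-negative distances. For the identity of indiscernibles, if $D=E$ then the diagonal coupling (pairing each point of $D$ with the same point of $E$) has cost $0$, so $W_{p,q}(D,E)=0$. Conversely, under the finiteness assumption of \cref{rem:wasserstein} there are only finitely many couplings, so the infimum is attained; a coupling of cost $0$ forces every matched pair $(z,w)$ to satisfy $\|z-w\|_q=0$, and then points off the diagonal in $D$ and $E$ are in bijection, so $D=E$ as multisets. Symmetry is clear: given a coupling $\gamma$ from $D$ to $E$, the map $\gamma^\top(z,w) := \gamma(w,z)$ is a coupling from $E$ to $D$ with the same cost.

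The main work will be the triangle inequality $W_{p,q}(D,F)\le W_{p,q}(D,E)+W_{p,q}(E,F)$. The idea is the standard composition of couplings via the intermediate diagram $E$. Given couplings $\gamma_1$ of $(D,E)$ and $\gamma_2$ of $(E,F)$, I will build a coupling $\gamma$ of $(D,F)$ as follows: for each point $y\in E$ used by both $\gamma_1$ and $\gamma_2$, chain the pairs $(x,y)\in\gamma_1$ and $(y,z)\in\gamma_2$ into a new pair $(x,z)\in\gamma$, pairing their multiplicities by any bijection. Points of $D$ that $\gamma_1$ sends to $\Delta$ stay matched to $\Delta$ in $\gamma$, and points of $F$ that $\gamma_2$ receives from $\Delta$ are likewise matched to $\Delta$ in $\gamma$. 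When a pair passes through $\Delta$ at the middle stage (i.e. $\gamma_1$ sends $x$ to some $\Delta$-point and $\gamma_2$ sends the same $\Delta$-point out to $z$), we still form $(x,z)$.

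To check this really is a coupling, I will verify that the marginals of $\gamma$ reproduce $D$ on the first factor and $F$ on the second, which follows from the marginal conditions on $\gamma_1,\gamma_2$. For the cost bound, note that by construction each pair $(x,z)\in\gamma$ arises from a pair of entries $(x,y)\in\gamma_1$ and $(y,z)\in\gamma_2$ with a shared middle term $y\in\Dgm\cup\Delta$, so the pointwise triangle inequality in the $q$-norm gives $\|x-z\|_q\le \|x-y\|_q+\|y-z\|_q$. Applying Minkowski's inequality for the $p$-norm to the resulting vectors then yields
\[
\|\gamma\|_{p,q}\le \|\gamma_1\|_{p,q}+\|\gamma_2\|_{p,q}.
\]
Taking the infimum over $\gamma_1$ and $\gamma_2$ yields the triangle inequality.

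The main obstacle is the combinatorial bookkeeping in the composition: one must ensure that multiplicities at a shared midpoint $y\in E\cup\Delta$ are matched consistently so that the resulting $\gamma$ has the correct marginals, and that the entries used in the pointwise triangle inequality are indexed compatibly with Minkowski's inequality. Once a concrete bijection between the matched multiplicities at each $y$ is chosen, the computation is routine, but the indexing has to be set up carefully, especially for the pairs that pass through $\Delta$ at either stage.
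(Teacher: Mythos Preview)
The paper does not supply a proof of this proposition; it simply states that the result is ``straightforward to check from the definition'' and defers to the textbook of Edelsbrunner and Harer for the equivalent formulation via matchings. Your sketch is the standard argument and is correct in outline.

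One minor point in your triangle-inequality construction: the remark about pairs ``passing through $\Delta$ at the middle stage'' is unnecessary and slightly misleading. The diagonal carries no marginal constraint in \cref{def:coupling}, so if $\gamma_1$ sends some $x\in D$ to $\Delta$ and $\gamma_2$ receives some $z\in F$ from $\Delta$, there is no shared diagonal point to chain through and no reason to form $(x,z)$; simply leave each matched to $\Delta$ in $\gamma$. The case that genuinely needs handling is when $x\in D$ is matched to some $y\in E$ by $\gamma_1$ and $y$ is then matched to $\Delta$ by $\gamma_2$ (or the symmetric situation): your chaining produces a pair $(x,z)$ with $z\in\Delta$, which is legitimate, and by \cref{rem:wasserstein} you may replace $z$ by the nearest diagonal point to $x$ without raising the cost. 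Any chain through $y\in E$ whose both endpoints land in $\Delta$ is simply discarded, which only helps. With these small clarifications the entrywise $q$-norm triangle inequality together with Minkowski for the $p$-norm gives the bound exactly as you describe.
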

	
	Recall that the $k$-th graded persistence diagram is a function
	$D_k:\Dgm \to \mz$ with finite support.
	Consider a function $A:\Dgm \to \mz$ with finite support.
	Then there exist unique persistence diagrams  $A^+,A^-:\Dgm \to \mz_{\geq 0}$ with disjoint support such that
	$A = A^+ - A^-$. The following definition is due to Bubenik and Elchesen~\cite{be:virtual}
	and applies to $k$-th graded persistence diagrams $D_k$ and $E_k$.
	
	\begin{definition} \label{def:graded-wasserstein}
		Let $A,B:\Dgm \to \mz$ be functions with finite support.
		Define the \emph{$(p,q)$-Wasserstein distance} between $A$ and $B$ to be given by 
		\begin{equation*}
			W_{p,q}(A,B) := W_{p,q}(A^+ + B^-, B^+ + A^-).
		\end{equation*}
	\end{definition}
	
	In their manuscript~\cite{be:virtual}, it is shown that if $W_{p,q}$ satisfies the condition that $W_{p,q}(D+F,E+F) = W_{p,q}(D,E)$ for all $D,E,F$ then $W_{p,q}$ is a metric and furthermore that this condition holds if $p=1$.
	
	\begin{proposition}[\cite{be:virtual}] \label{prop:metric}
		The $(1,q)$-Wasserstein distance is a metric for
		functions from $\Dgm$ to $\mz$ with finite support (e.g. persistence diagrams, and $k$-th graded persistence diagrams).
		Furthermore, for all such functions $D,E,F$, $W_{1,q}(D+F,E+F) = W_{1,q}(D,E)$.
	\end{proposition}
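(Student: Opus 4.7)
The plan is to first establish the translation invariance identity $W_{1,q}(D+F, E+F) = W_{1,q}(D, E)$ for standard (non-negative) persistence diagrams, and then derive the triangle inequality and symmetry for signed diagrams from it by algebraic manipulation. Non-degeneracy will follow from the uniqueness of the decomposition of a signed multiset into disjoint positive and negative parts.

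For translation invariance, the direction $W_{1,q}(D+F, E+F) \leq W_{1,q}(D, E)$ is immediate: given any coupling of $D$ with $E$, extend it by the identity pairing on $F$, which contributes nothing to the $1$-cost. The reverse inequality I would prove by a rerouting argument on couplings. Given a coupling $\gamma'$ of $D+F$ with $E+F$, label the copies of each $f \in F$ on the left and right as $f^L, f^R$. If $f^L$ is paired with some $u \neq f^R$ on the right, and some $v \neq f^L$ on the left is paired with $f^R$, then replace the pairs $(f^L, u)$ and $(v, f^R)$ by $(v, u)$ and $(f^L, f^R)$. The net change in cost is $\norm{v-u}_q - \norm{f-u}_q - \norm{v-f}_q$, which is nonpositive by the triangle inequality for $\norm{\cdot}_q$ on $\mr^2$. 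Iterating this modification for every copy of every $f \in F$ and then discarding the identity pairings on $F$ yields a coupling of $D$ with $E$ of no greater cost, as needed.

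Granted translation invariance, the triangle inequality for signed diagrams reduces to the standard one. For signed $A, B, C$, translation invariance gives
\begin{align*}
W_{1,q}(A,B) &= W_{1,q}(A^+ + B^- + C^-,\; A^- + B^+ + C^-), \\
W_{1,q}(B,C) &= W_{1,q}(A^- + B^+ + C^-,\; A^- + B^- + C^+),
\end{align*}
so by the standard triangle inequality their sum is at least $W_{1,q}(A^+ + B^- + C^-,\; A^- + B^- + C^+) = W_{1,q}(A^+ + C^-,\; A^- + C^+) = W_{1,q}(A,C)$, using translation invariance a final time to strip $B^-$. Symmetry transfers directly from the standard Wasserstein distance, and if $W_{1,q}(A,B) = 0$ then $A^+ + B^- = B^+ + A^-$ as multisets, whence $A^+ - A^- = B^+ - B^-$, i.e., $A = B$.

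The main obstacle is the rerouting step in the reverse direction of translation invariance. Its correctness depends essentially on the additivity of the $1$-norm across pairs of matchings: the local swap decreases a sum of two terms by an amount controlled by the triangle inequality, but for $p > 1$ such a local swap need not decrease the $p$-norm of the full cost vector. This is consistent with the failure of the triangle inequality reported in \cref{prop:triangle}. Some additional bookkeeping is required for matches involving the diagonal $\Delta$, but since the closest-point projection to $\Delta$ is $1$-Lipschitz, the same triangle-inequality estimate applies and the argument goes through unchanged.
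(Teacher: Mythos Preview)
The paper does not supply its own proof of this proposition: it is quoted from \cite{be:virtual}, with only the summary sentence ``if $W_{p,q}$ satisfies $W_{p,q}(D+F,E+F)=W_{p,q}(D,E)$ for all $D,E,F$ then $W_{p,q}$ is a metric, and this condition holds if $p=1$.'' Your proposal is therefore not a re-derivation of an argument in the paper but a self-contained proof, and as such it is essentially correct and follows the same logical skeleton indicated by that sentence.

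Your rerouting/swap argument for the non-negative translation invariance is sound: replacing the pairs $(f^L,u)$ and $(v,f^R)$ by $(v,u)$ and $(f^L,f^R)$ changes the $(1,q)$-cost by $\norm{v-u}_q-\norm{f-u}_q-\norm{v-f}_q\le 0$, and iterating over all copies of all $f\in F$ terminates in a coupling of $D$ with $E$. The diagonal cases are handled correctly once you observe that a resulting pair $(v,u)\in\Delta\times\Delta$ may simply be discarded (the marginal constraints in \cref{def:coupling} are imposed only on $\Dgm$), so no Lipschitz property of the projection is actually needed. Your reduction of the signed triangle inequality to the ordinary one via translation invariance is exactly the mechanism alluded to in the paper.

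One small gap: the proposition asserts $W_{1,q}(D+F,E+F)=W_{1,q}(D,E)$ for \emph{signed} $D,E,F$, whereas your rerouting step is carried out only for non-negative diagrams. To close this, first check that the value $W_{1,q}(P+S,\,R+Q)$ is independent of the decomposition $A=P-Q$, $B=R-S$ into non-negative parts (this follows from two applications of the non-negative translation invariance you proved). Then, writing $D+F=(D^++F^+)-(D^-+F^-)$ and similarly for $E+F$, the signed translation invariance reduces to cancelling the common summand $F^++F^-$ from both arguments, again by the non-negative case.
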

	
	Here we show that in all other cases the triangle inequality is not satisfied. Recall that the interval $[x,y)\in\Dgm$ denotes the persistence diagram that takes value $1$ on $[x,y)$ and $0$ elsewhere.
	
	\begin{proposition} \label{prop:triangle}
		For $1 < p \leq \infty$ and $1 \leq q \leq \infty$ the $(p,q)$-Wasserstein distance for $k$-th graded persistence diagrams does not satisfy the triangle inequality.
	\end{proposition}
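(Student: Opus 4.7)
The plan is to exhibit three valid $k$-th graded persistence diagrams $A$, $B$, $C$ whose $(p,q)$-Wasserstein distances violate the triangle inequality for every $p>1$ and every $q\in[1,\infty]$. Concretely, fix $s\in(0, 2^{1-1/p}-1)$, a range that is nonempty precisely because $p>1$ (with the convention $2^{1/\infty}=1$, so that $s\in(0,1)$ when $p=\infty$), and choose $L$ large. Define
\begin{equation*}
A = [0,L),\qquad B = [0,L+1) - [s,L+1) + [s,L+2),\qquad C = [0,L+2).
\end{equation*}
Each of these has the form given by \cref{cor:gpd}: $A$ and $C$ are single positive intervals, while $B$ consists of a single connected component with $m_1=2$, valid because $0 < s < L+1$ and $L+1 < L+2$.

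First I compute $W_{p,q}(A,C)$. Both sides of any coupling contain a single interval, $A^+ + C^- = \{[0,L)\}$ and $C^+ + A^- = \{[0,L+2)\}$. The only feasible couplings are the direct match, with cost $\lVert(0,-2)\rVert_q = 2$, or sending both points independently to the diagonal, which costs $\bigl(d_q(0,L)^p + d_q(0,L+2)^p\bigr)^{1/p} = \Theta(L)$. For $L$ sufficiently large, the direct match wins and $W_{p,q}(A,C)=2$. Next I upper-bound $W_{p,q}(A,B)$: here $A^+ + B^- = \{[0,L),[s,L+1)\}$ and $B^+ + A^- = \{[0,L+1),[s,L+2)\}$, and the coupling that matches $[0,L)\leftrightarrow [0,L+1)$ and $[s,L+1)\leftrightarrow [s,L+2)$ yields $q$-distances $(1,1)$, so $W_{p,q}(A,B)\le \lVert(1,1)\rVert_p = 2^{1/p}$. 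Analogously, for $W_{p,q}(B,C)$ I consider the "cross" coupling of $B^+ + C^- = \{[0,L+1),[s,L+2)\}$ with $C^+ + B^- = \{[0,L+2),[s,L+1)\}$ that matches $[0,L+1)\leftrightarrow [s,L+1)$ and $[s,L+2)\leftrightarrow [0,L+2)$: each pair has $q$-distance $s$, giving $W_{p,q}(B,C)\le \lVert(s,s)\rVert_p = s\cdot 2^{1/p}$.

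Combining the two upper bounds,
\begin{equation*}
W_{p,q}(A,B) + W_{p,q}(B,C) \le (1+s)\cdot 2^{1/p} < 2 = W_{p,q}(A,C),
\end{equation*}
where the strict inequality is exactly the condition $s < 2^{1-1/p}-1$, contradicting the triangle inequality. Note that for $p=1$ the required range collapses to $\emptyset$, consistent with \cref{prop:metric}.

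The main obstacle is establishing the equality $W_{p,q}(A,C)=2$ (as opposed to only an upper bound), since for Steps 2 and 3 only specific couplings are needed. This lower bound is secured by the observation that the two feasible coupling types for a pair of one-point diagrams have costs $2$ and $\Theta(L)$, so choosing $L$ large forces the direct match to be optimal. The remainder of the argument is a matter of explicitly displaying the two chosen couplings and applying the definition of $\lVert\cdot\rVert_{p,q}$.
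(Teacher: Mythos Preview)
Your argument is correct and follows the same strategy as the paper's: exhibit two one-point graded diagrams whose direct $W_{p,q}$-distance equals $2$, together with a signed intermediate through which the detour costs only $(1+\text{small})\cdot 2^{1/p}<2$. Your intermediate $B$ has three points rather than the paper's five-point $E_k$, and you fix $s<2^{1-1/p}-1$ directly rather than passing to a limit, but the mechanism is identical.

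One minor point worth tightening: invoking \cref{cor:gpd} only shows that $A,B,C$ satisfy the \emph{necessary} structural constraints of a $k$-th graded persistence diagram; to place the counterexample squarely in the stated class you should also observe that they are \emph{realizable}. The paper handles this by writing down parent persistence diagrams explicitly (adding $(k-1)[0,12)$ as padding). In your setup, taking the persistence diagram $[0,L+1)+[s,L+2)+(k-1)[-M,M')$ for $M,M'$ large yields $B$ as its $k$-th graded diagram, and similarly $[0,L)+(k-1)[-M,M')$ and $[0,L+2)+(k-1)[-M,M')$ realize $A$ and $C$.
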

	
	\begin{proof}
		Consider the persistence diagram $D$ given by $D = [0,10) + (k-1)[0,12)$ and thus $D_k=[0,10)$. Also consider for $0 < \eps \leq 1$ the pair of persistence diagrams $F,E:\Dgm\to\mz$ given by $F=[2,10+2\eps) + (k-1)[0,12)$ and $E=[0,10) + [1,10+\eps) + [2,10+2\eps) + (k-1)[0,12)$. Thus $F_k=[2,10+2\eps)$ and $E_k=[0,10) - [1,10) + [1,10+\eps) - [2,10+\eps) + [2,10+2\eps)$, respectively.
		
		Notice that $W_{p,q}(D_k,F_k) = \norm{(2,2\eps)}_q = 2\norm{(1,\eps)}_q$, which is realized by the coupling which matches $[0,10)$ to $[2,10+2\eps)$. 
		Also observe that 
		$W_{p,q}(D_k,E_k) = W_{p,q}([0,10)+[1,10)+[2,10+\eps),[0,10)+[1,10+\eps)+[2,10+2\eps)) = \norm{(\eps,\eps)}_p = \eps \norm{(1,1)}_p$ via the coupling that matches $[0,10)$ to $[0,10)$, $[1,10)$ to $[1,10+\eps)$, and $[2,10+\eps)$ to $[2,10+2\eps)$. Similarly, we have $W_{p,q}(E_k,F_k) = \norm{(1,1)}_p$.
		Assume, to the contrary, that $W_{p,q}$ satisfies the triangle inequality for $k$-th graded persistence diagrams. Then we have $2 \leq 2\norm{(1,\eps)}_q = W_{p,q}(D_k,F_k) \leq W_{p,q}(D_k,E_k) + W_{p,q}(E_k,F_k) = (1+\eps) \norm{(1,1)}_p $ for all $\eps$ with $0 < \eps \leq 1$.
		Therefore $2 \leq \norm{(1,1)}_p$,
		which contradicts that $p>1$.
	\end{proof}
	\begin{figure}[h]
		\begin{center} 
			\begin{tikzpicture}[line cap=round,line join=round,>=triangle 45,x=1.0cm,y=1.0cm,scale=0.7]
			\draw[color=black] (0,8) -- (6,8);
			\foreach \x in {1,2,...,6}
			\draw[shift={(\x,8)},color=black] (0pt,2pt) -- (0pt,-2pt);
			\foreach \x in {1,2,3,...,6}
			\draw[shift={(\x,8)},color=black] node[below] {\footnotesize $\x$};
			\draw[color=black] (0,8) -- (0,12.5);
			\foreach \y in {8,9,...,12}
			\draw[shift={(0,\y)},color=black] node[left] {\footnotesize $\y$};

			\fill [color=blue] (0,10) circle (2mm);
			\draw [mark=triangle*,mark options={color=blue},mark size=2mm] plot coordinates {(2,11)};
			\draw[color=orange,very thick] (0,10) -- (2,11);
			\end{tikzpicture}
			\begin{tikzpicture}[line cap=round,line join=round,>=triangle 45,x=1.0cm,y=1.0cm,scale=0.7]
			\draw[color=black] (0,8) -- (6,8);
			\foreach \x in {1,2,...,6}
			\draw[shift={(\x,8)},color=black] (0pt,2pt) -- (0pt,-2pt);
			\foreach \x in {1,2,3,...,6}
			\draw[shift={(\x,8)},color=black] node[below] {\footnotesize $\x$};
			\draw[color=black] (0,8) -- (0,12.5);
			\foreach \y in {8,9,...,12}
			\draw[shift={(0,\y)},color=black] node[left] {\footnotesize $\y$};
			
			\fill [color=blue,shift={(0,.2)}] (0,10) circle (2mm);
			
			\draw [mark=square*,mark options={color=blue},mark size=3pt] plot coordinates {(0,10)};
			
			\draw [mark=square,mark options={color=green},mark size=3pt] plot coordinates {(1,10)};
			
			\draw [mark=square*,mark options={color=blue},mark size=3pt] plot coordinates {(1,10.5)};
			
			\draw [mark=square,mark options={color=green},mark size=3pt] plot coordinates {(2,10.5)};
			
			\draw [mark=square*,mark options={color=blue},mark size=3pt] plot coordinates {(2,11)};
			
			\draw[color=orange,very thick] (0,10) to[out=0,in=90,distance=40pt] (0,10.2);
			\draw[color=orange,very thick] (1,10) to[out=90,in=270,distance=20pt] (1,10.5);
			\draw[color=orange,very thick] (2,10.5) to[out=90,in=270,distance=20pt] (2,11);
			\end{tikzpicture}
			\begin{tikzpicture}[line cap=round,line join=round,>=triangle 45,x=1.0cm,y=1.0cm,scale=0.7]
			\draw[color=black] (0,8) -- (6,8);
			\foreach \x in {1,2,...,6}
			\draw[shift={(\x,8)},color=black] (0pt,2pt) -- (0pt,-2pt);
			\foreach \x in {1,2,3,...,6}
			\draw[shift={(\x,8)},color=black] node[below] {\footnotesize $\x$};
			\draw[color=black] (0,8) -- (0,12.5);
			\foreach \y in {8,9,...,12}
			\draw[shift={(0,\y)},color=black] node[left] {\footnotesize $\y$};
			
			\draw [mark=triangle*,mark options={color=blue},mark size=4pt,shift={(-.10,0)}] plot coordinates {(2,11)};
			
			\draw [mark=square*,mark options={color=blue},mark size=3pt] plot coordinates {(0,10)};
			
			\draw [mark=square,mark options={color=green},mark size=3pt] plot coordinates {(1,10)};
			
			\draw [mark=square*,mark options={color=blue},mark size=3pt] plot coordinates {(1,10.5)};
			
			\draw [mark=square,mark options={color=green},mark size=3pt] plot coordinates {(2,10.5)};
			
			\draw [mark=square*,mark options={color=blue},mark size=3pt] plot coordinates {(2,11)};
			
			\draw[color=orange,very thick] (1.9,11) to[out=180,in=45,distance=50pt] (2,11);
			\draw[color=orange,very thick] (0,10) to[out=0,in=180,distance=20pt] (1,10);
			\draw[color=orange,very thick] (1,10.5) to[out=0,in=180,distance=20pt] (2,10.5);
			\end{tikzpicture}
		\end{center}
		\caption{Graded persistence diagrams and couplings as in the proof of \cref{prop:triangle} with $\eps=0.5$. Circles denote $D_k$, squares denote $E_k$, and triangles denote $F_k$. Solid blue points evaluate to +1 and hollow green points evaluate to -1. Orange lines between points denote that the points are matched in the coupling. {\bf Left}: Coupling between $D_k$ and $F_k$. {\bf Middle}: Coupling between $D_k$ and $E_k$. {\bf Right}: Coupling between $F_k$ and $E_k$.}\label{fig:trig_inequality}
	\end{figure}
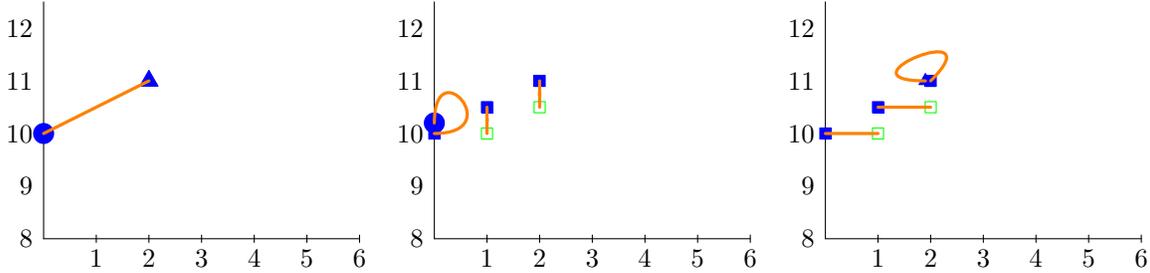
	
	\subsection{Stability of Graded Persistence Diagrams}
	
	We prove a stability theorem for graded persistence diagrams using the Wasserstein distance and certain geodesics. 
	
	Let $M$ and $N$ be persistence modules with
	persistence diagrams $D,E: \Dgm \to \mz_{\geq 0}$ and 
	$k$-th graded persistence diagrams $D_k,E_k: \Dgm \to \mz$ for $k \geq 1$. Recall that for $1 \leq q \leq \infty$ the $q$-norms on $\mr^2$ produce strongly equivalent metrics.
	In our setting, we will use the $1$-norm because it gives a nice expression for the Wasserstein distance~\eqref{eq:wasserstein} and a nice statement for stability (Theorem~\ref{thm:stability}). By \cref{rem:wasserstein}, there is an ordering of the points of $D$ and $E$ such that
	\begin{equation} \label{eq:wasserstein}
		W_{1,1}(D,E) =
		\sum_{i=1}^{r} \abs{x_i - z_i} +
		\sum_{i=1}^{r} \abs{y_i - w_i} +
		\sum_{i=r + 1}^{r+s} (y_i - x_i) +
		\sum_{i=r+1}^{r+t} (w_i - z_i),
	\end{equation}
	where $D = \{(x_i,y_i)\}_{i=1}^{r+s}$ and
	$E = \{(z_i,w_i)\}_{i=1}^{r+t}$.
	
	\begin{definition}
		Let $x$ and $y$ be points in a metric space $(X,d)$ with $\tau := d(x,y)$. A \emph{geodesic} from $x$ to $y$ is a map $\gamma:[0,\tau] \to X$ such that for $0 \leq s \leq t \leq \tau$, $d(\gamma(s),\gamma(t)) = t-s$.
	\end{definition}
	
	We will show that the $(1,1)$-Wasserstein distance between persistence diagrams can be realized by a geodesic.
	Furthermore, considering each persistence diagram as a finite indexed set of points in $\Dgm$ (\cref{rem:wasserstein}), we can choose a geodesic that is a concatenation of finitely many geodesics that leave all but one of the points of the persistence diagram fixed and leave one of the coordinates of the remaining point fixed.
	
	Let $\DGM$ denote the set of persistence diagrams with the $(1,1)$-Wasserstein distance. The following are consequence of \cref{def:wasserstein,def:coupling} and \cref{rem:wasserstein}. Call the geodesics in the lemmas below and their reverses \emph{coordinate geodesics}.
	
	\begin{lemma} \label{lem:geodesic1}
		Let {$D = D' + [x,y)$} be a persistence diagram.
		Choose $z$ with $z < y$.
		Let $\tau = \abs{z-x}$.
		Let {$E = D' +[z,y)$}.
		Let $\gamma:[0,\tau] \to \DGM$ be given by
		{$\gamma(t) = D' + [x_t,y)$}, where $x_t = x(1-\frac{t}{\tau}) + \frac{t}{\tau}z$.
		Then $\gamma$ is a geodesic from $D$ to $E$.
	\end{lemma}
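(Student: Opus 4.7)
The plan is to verify the two requirements of a geodesic: (i) $\gamma(0) = D$ and $\gamma(\tau) = E$, and (ii) $W_{1,1}(\gamma(s), \gamma(t)) = t - s$ for all $0 \leq s \leq t \leq \tau$. Plugging $t = 0$ and $t = \tau$ into the formula $x_t = x + (t/\tau)(z - x)$ gives $x_0 = x$ and $x_\tau = z$, confirming the endpoints. Since $x < y$ and $z < y$, every convex combination $x_t$ also satisfies $x_t < y$, so each $\gamma(t) = D' + [x_t, y)$ is a bona fide persistence diagram.

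A direct computation yields $x_t - x_s = \frac{t-s}{\tau}(z - x)$, hence $|x_t - x_s| = t - s$ for all $0 \leq s \leq t \leq \tau$. To reduce to the two-point case, I apply \cref{prop:metric} with $D'$ playing the role of $F$ to obtain
\[
W_{1,1}(\gamma(s), \gamma(t)) = W_{1,1}(D' + [x_s, y),\; D' + [x_t, y)) = W_{1,1}([x_s, y),\; [x_t, y)).
\]
The coupling that matches the unique off-diagonal point of $[x_s, y)$ to that of $[x_t, y)$ has $(1,1)$-cost $|x_t - x_s| + |y - y| = t - s$, giving the upper bound.

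For the matching lower bound, by \cref{rem:wasserstein} any coupling between these two single-point diagrams either makes the above direct match (cost $t - s$) or sends each of the two off-diagonal points to the diagonal. In the latter case, the point $(a, y)$ is sent to $((a+y)/2, (a+y)/2)$, contributing $y - a$ to the $(1,1)$-cost, so the total is $(y - x_s) + (y - x_t) = 2y - x_s - x_t$. Since $y > \max(x_s, x_t)$, this is at least $2\max(x_s, x_t) - x_s - x_t = |x_s - x_t| = t - s$. Hence the direct matching is optimal and $W_{1,1}(\gamma(s), \gamma(t)) = t - s$. The only subtle step is this final comparison, but it is geometrically transparent: the diagonal lies strictly below both $(x_s, y)$ and $(x_t, y)$, so routing through the diagonal is never cheaper than the direct horizontal match.
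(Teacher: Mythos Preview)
Your proof is correct and follows essentially the same approach as the paper: reduce to the single-point case via \cref{prop:metric} and then compute $W_{1,1}([x_s,y),[x_t,y)) = t-s$. The paper's proof simply asserts this last equality without further justification, whereas you have carefully verified both the upper bound (via the direct coupling) and the lower bound (by comparing against the only other coupling), which is a welcome elaboration.
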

	
	\begin{lemma} \label{lem:geodesic2}
		Let {$D = D' + [x,y)$} be a persistence diagram.
		Choose $w$ with $x<w$.
		Let $\tau = \abs{w-y}$.
		Let $E = D' + [x,w)$.
		Let $\gamma:[0,\tau] \to \DGM$ be given by
		{$\gamma(t) = D' + [x,y_t)$}, where $y_t = y(1-\frac{t}{\tau}) + \frac{t}{\tau}w$.
		Then $\gamma$ is a geodesic from $D$ to $E$.
	\end{lemma}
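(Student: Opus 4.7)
The plan is to use the translation invariance of the $(1,1)$-Wasserstein distance (\cref{prop:metric}) to strip out the common summand $D'$ from both endpoints of the path and thereby reduce the problem to computing distances between singleton persistence diagrams of the form $[x, y_t)$. First, I would check that $\gamma(t) \in \DGM$ for every $t \in [0,\tau]$: since $y_t$ is a convex combination of $y$ and $w$ and both exceed $x$, we have $y_t > x$, so $[x, y_t)$ is a valid half-open interval and $\gamma(t) = D' + [x, y_t)$ is a persistence diagram. At the endpoints, $y_0 = y$ and $y_\tau = w$, so $\gamma(0) = D$ and $\gamma(\tau) = E$.

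Next, for any $0 \le s \le t \le \tau$, \cref{prop:metric} gives
\begin{equation*}
  W_{1,1}(\gamma(s), \gamma(t)) = W_{1,1}(D' + [x, y_s),\, D' + [x, y_t)) = W_{1,1}([x, y_s),\, [x, y_t)).
\end{equation*}
I would then evaluate the right-hand side directly. Since both diagrams are singletons sharing the same birth coordinate $x$, a coupling essentially reduces to one of two options: match the two off-diagonal points to each other at cost $|y_s - y_t|$, or send each separately to the diagonal at cost $(y_s - x) + (y_t - x)$. Because $y_s, y_t > x$, a two-line case analysis on the sign of $y_t - y_s$ shows the direct coupling is always at least as cheap (the diagonal cost exceeds the direct cost by $2\min(y_s, y_t) - 2x \geq 0$). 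Hence $W_{1,1}([x, y_s), [x, y_t)) = |y_s - y_t|$.

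Finally, a direct computation gives
\begin{equation*}
  y_t - y_s = (w - y)\frac{t - s}{\tau},
\end{equation*}
so $|y_s - y_t| = |w - y| \cdot (t - s)/\tau = t - s$. Combining, $W_{1,1}(\gamma(s), \gamma(t)) = t - s$ for all $0 \le s \le t \le \tau$, which is exactly the geodesic condition. The main (and only real) obstacle is justifying the optimality of the direct coupling, but once the ambient $D'$ has been removed via translation invariance, the remaining problem involves only two points and is handled by a short case analysis.
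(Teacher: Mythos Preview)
Your proposal is correct and follows essentially the same approach as the paper: strip off the common summand $D'$ via the translation invariance in \cref{prop:metric}, then compute the distance between the two singleton diagrams directly. The paper's proof (given for \cref{lem:geodesic1} and declared ``similar'' for this lemma) is terser and simply asserts $W_{1,1}([x_s,y),[x_t,y)) = t-s$ without spelling out the coupling comparison you supply, so your version is in fact a bit more detailed.
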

	
	\begin{lemma} \label{lem:geodesic3}
		Let {$D = D' + [x,y)$} be a persistence diagram.
		Let $\tau = y-x$.
		Let $\gamma:[0,\tau] \to \DGM$ be defined as follows.
		For $0 \leq t < \tau$,
		{$\gamma(t) = D' + [x_t,y)$}, where $x_t = x(1-\frac{t}{\tau}) + \frac{t}{\tau}y$ 
		and $\gamma(\tau) = D'$.
		Then $\gamma$ is a geodesic from $D$ to $D'$.
	\end{lemma}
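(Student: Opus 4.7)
The plan is to verify the endpoints, produce an explicit coupling of cost at most $t-s$ for each pair $0 \leq s \leq t \leq \tau$, and then force equality via a triangle inequality argument. First, $x_0 = x$ gives $\gamma(0) = D' + [x,y) = D$, and $\gamma(\tau) = D'$ by definition, so the endpoints are correct.

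Next I would establish the upper bound $W_{1,1}(\gamma(s),\gamma(t)) \leq t-s$ for every $0 \leq s \leq t \leq \tau$ by exhibiting a coupling. When $t < \tau$, both $\gamma(s)$ and $\gamma(t)$ contain $D'$, so I pair each point of $D'$ with itself and pair $[x_s,y)$ with $[x_t,y)$; the $1$-norm cost is $|x_t - x_s| = (t-s)(y-x)/\tau = t-s$. When $t = \tau$, I pair each point of $D'$ with itself and send $[x_s,y)$ to its diagonal projection $\left(\tfrac{x_s+y}{2},\tfrac{x_s+y}{2}\right)$; the $1$-norm cost is $(y - x_s) = \tau - s$.

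The crux is the matching lower bound $W_{1,1}(D,D') \geq \tau$. By \cref{prop:metric} applied with the $\mz$-valued functions $[x,y)$, $\emptyset$, and $D'$, we have
\[
W_{1,1}(D,D') \;=\; W_{1,1}\bigl(D' + [x,y),\; D' + \emptyset\bigr) \;=\; W_{1,1}\bigl([x,y),\,\emptyset\bigr),
\]
and the only coupling of the single off-diagonal point $(x,y)$ with the empty diagram sends it to its diagonal projection, contributing $y - x = \tau$. Hence $W_{1,1}(\gamma(0),\gamma(\tau)) = \tau$.

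Finally, for any $0 \leq s \leq t \leq \tau$, the triangle inequality gives
\[
\tau \;=\; W_{1,1}(\gamma(0),\gamma(\tau)) \;\leq\; W_{1,1}(\gamma(0),\gamma(s)) + W_{1,1}(\gamma(s),\gamma(t)) + W_{1,1}(\gamma(t),\gamma(\tau)) \;\leq\; s + (t-s) + (\tau-t) \;=\; \tau,
\]
so equality must hold throughout, yielding $W_{1,1}(\gamma(s),\gamma(t)) = t-s$, which is the geodesic condition. The main obstacle is showing that no clever matching can beat the direct ``send $[x,y)$ to the diagonal'' coupling; this would be painful to argue from scratch via optimal transport, but the translation invariance in \cref{prop:metric} reduces it immediately to the trivial single-point case.
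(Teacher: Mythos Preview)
Your proof is correct. The paper's proof (given for \cref{lem:geodesic1} with ``the others are similar'') is more direct: it applies the translation invariance of \cref{prop:metric} to \emph{every} pair $0 \leq s \leq t \leq \tau$, reducing $W_{1,1}(\gamma(s),\gamma(t)) = W_{1,1}(D'+[x_s,y),D'+[x_t,y))$ to the single-point distance $W_{1,1}([x_s,y),[x_t,y)) = t-s$ in one stroke. You instead use \cref{prop:metric} only once (at the endpoints) to pin down $W_{1,1}(D,D') = \tau$, supply explicit couplings for the upper bounds, and then run a triangle-inequality sandwich to force equality for intermediate pairs. Both routes hinge on the same translation-invariance lemma; the paper's is shorter, while your sandwich argument is a standard geodesic-verification template that would transfer even if one only had upper-bound couplings and an exact endpoint distance.
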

	\begin{proof}
		{We prove \cref{lem:geodesic1}, the others are similar. For any $s,t$ with $0\leq s \leq t \leq \tau$, $\gamma(s)$ and $\gamma(t)$ are the persistence diagrams $[x_s,y)+D'$ and $[x_t,y)+D'$ respectively. Apply \cref{prop:metric} to obtain $W_{1,1}(\gamma(s),\gamma(t)) = W_{1,1}([x_s,y),[x_t,y)) = t-s$.}
	\end{proof}
	
	\begin{proposition} \label{prop:geodesic}
		Let $D$ and $E$ be persistence diagrams. Then there is a geodesic from $D$ to $E$ consisting of a concatenation of finitely many coordinate geodesics. 
	\end{proposition}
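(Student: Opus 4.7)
The plan is to build the geodesic out of the optimal coupling described in \cref{rem:wasserstein} and the three coordinate-geodesic constructions already in hand (\cref{lem:geodesic1,lem:geodesic2,lem:geodesic3}), concatenated in an order that keeps each intermediate object a valid persistence diagram so that each lemma applies on the nose.

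First I would invoke \cref{rem:wasserstein} to choose a coupling $\gamma$ that realizes $W_{1,1}(D,E)$ together with the corresponding ordering $D=\{(x_i,y_i)\}_{i=1}^{r+s}$ and $E=\{(z_i,w_i)\}_{i=1}^{r+t}$ from~\eqref{eq:wasserstein}. I would then define the overall path in three blocks. Block~1: for each matched pair $i\leq r$, move $(x_i,y_i)$ to $(z_i,w_i)$ using two coordinate geodesics. If $z_i<y_i$, first apply \cref{lem:geodesic1} to send $[x_i,y_i)$ to $[z_i,y_i)$ (length $|x_i-z_i|$), then apply \cref{lem:geodesic2} to send $[z_i,y_i)$ to $[z_i,w_i)$ (length $|y_i-w_i|$); if instead $z_i\geq y_i$, reverse the order, first moving the death coordinate from $y_i$ up to $w_i$ (valid because $x_i<y_i\leq z_i<w_i$), and then moving the birth coordinate from $x_i$ to $z_i$ (valid because $z_i<w_i$). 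Block~2: for each $i$ with $r<i\leq r+s$, use \cref{lem:geodesic3} to collapse $[x_i,y_i)$ to the diagonal (length $y_i-x_i$). Block~3: for each $j$ with $r<j\leq r+t$, use the reverse of \cref{lem:geodesic3} to create $[z_j,w_j)$ from the diagonal (length $w_j-z_j$).

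Concatenating these pieces yields a path $\gamma\colon [0,\tau]\to\DGM$ from $D$ to $E$ whose total length is exactly
\begin{equation*}
\sum_{i=1}^{r}\abs{x_i-z_i}+\sum_{i=1}^{r}\abs{y_i-w_i}+\sum_{i=r+1}^{r+s}(y_i-x_i)+\sum_{i=r+1}^{r+t}(w_i-z_i)=W_{1,1}(D,E)=\tau,
\end{equation*}
by~\eqref{eq:wasserstein}. To conclude it is a geodesic, I would fix $0\leq s\leq t\leq\tau$ and argue $W_{1,1}(\gamma(s),\gamma(t))=t-s$ by the standard squeeze: on the one hand, restricting the concatenation to $[s,t]$ and applying the triangle inequality piecewise gives $W_{1,1}(\gamma(s),\gamma(t))\leq t-s$; on the other hand, the global triangle inequality $\tau=W_{1,1}(D,E)\leq s+W_{1,1}(\gamma(s),\gamma(t))+(\tau-t)$ forces $W_{1,1}(\gamma(s),\gamma(t))\geq t-s$.

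The one substantive obstacle is Block~1: naively moving the birth coordinate first can leave the intermediate interval degenerate when $z_i\geq y_i$, so I need the case split above to ensure that at every instant the moving point lies strictly above the diagonal and the relevant hypothesis ($z<y$ in \cref{lem:geodesic1}, $x<w$ in \cref{lem:geodesic2}) is satisfied. Beyond that, the argument is an accounting exercise comparing the prescribed coordinate-geodesic lengths against the explicit sum in~\eqref{eq:wasserstein}.
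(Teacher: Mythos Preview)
Your approach is essentially the same as the paper's: the paper's proof is a terse three-line sketch that simply says to concatenate one coordinate geodesic from \cref{lem:geodesic1} per term in the first sum of \eqref{eq:wasserstein}, one from \cref{lem:geodesic2} per term in the second sum, one from \cref{lem:geodesic3} per term in the third sum, and one reversed \cref{lem:geodesic3} per term in the fourth. Your version is strictly more careful---the case split in Block~1 (handling $z_i\geq y_i$ so intermediate intervals stay in $\Dgm$) and the explicit squeeze argument verifying the concatenation is a geodesic are details the paper glosses over---but the underlying idea is identical.
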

	
	\begin{proof}
		Consider \eqref{eq:wasserstein}. We obtain the desired geodesic by concatenating
		a coordinate geodesic from \cref{lem:geodesic1} for each term in the first sum in \eqref{eq:wasserstein},
		a coordinate geodesic from \cref{lem:geodesic2} for each term in the second sum in \eqref{eq:wasserstein},
		a coordinate geodesic from \cref{lem:geodesic3} for each term in the third sum in \eqref{eq:wasserstein}, and the reverse of
		a coordinate geodesic from \cref{lem:geodesic3} for each term in the fourth sum in \eqref{eq:wasserstein}.
	\end{proof}
	
	Let $M$ and $N$ be persistence modules with
	persistence diagrams $D,E: \Dgm \to \mz_{\geq 0}$ and 
	$k$-th graded persistence diagrams $D_k,E_k: \Dgm \to \mz$ for $k \geq 1$.
	Let $K$ be the maximum of $\rank(M)$ and $\rank(N)$.
	
	\begin{theorem} \label{thm:stability}
		For $1 \leq k < K$, $W_{1,1}(D_k,E_k) \leq 2\, W_{1,1}(D,E)$.
		Also $W_{1,1}(D_K,E_K) \leq W_{1,1}(D,E)$ and for $k > K$, $D_k = E_k = 0$.
		Furthermore, there exist $M$ and $N$ such that all of these bounds are attained.
	\end{theorem}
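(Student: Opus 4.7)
The plan is to reduce the main inequality to a per-step bound via the coordinate geodesic decomposition of \cref{prop:geodesic}, then sum. The claim for $k > K$ is immediate: since $\Rank(M), \Rank(N) \leq K$, the function $\Rank_k = u_k \circ \Rank$ vanishes identically for $k > K$, so $D_k = E_k = 0$. For the main inequality, I would use \cref{prop:geodesic} to write any optimal geodesic from $D$ to $E$ as a concatenation of coordinate geodesics (each of type \cref{lem:geodesic1}, \cref{lem:geodesic2}, or \cref{lem:geodesic3}) with total length $W_{1,1}(D, E)$. Since $W_{1,1}$ on graded diagrams is a metric (\cref{prop:metric}) and so satisfies the triangle inequality, it suffices to prove the \emph{per-step bound}: if $E$ is obtained from $D$ by a single coordinate geodesic of length $\tau$, then $W_{1,1}(D_k, E_k) \leq 2\tau$ for all $1 \leq k < K$ and $W_{1,1}(D_K, E_K) \leq \tau$.

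The per-step analysis is the technical heart of the proof. For concreteness consider the case of \cref{lem:geodesic1}, where the left endpoint of one interval of $D$ shifts from $x$ to $x + \tau$; the other cases are symmetric. Then $\Rank(E) = \Rank(D) - 1$ on the strip $S = \{[a, b) : x \leq a < x + \tau,\ b \leq y\}$ and $\Rank(E) = \Rank(D)$ elsewhere. After composing with $u_k$, the down-set underlying $\Rank_k(E)$ differs from that of $\Rank_k(D)$ only on the portion of $S$ where $\Rank(D) = k$ exactly. Using \cref{prop:pdk} and the structural description in \cref{cor:gpd}, I would track how the affected maxima and meets change: within any topology-stable sub-interval of the shift, exactly one maximum of the support has its left coordinate advance by the elapsed shift amount and at most one meet does likewise, while at the finitely many topology-change times (where a meet degenerates to the diagonal, or a maximum is absorbed into a higher one) the vanishing terms sit on the diagonal and so contribute zero Wasserstein cost. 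Pairing translated maxima with translated maxima and translated meets with translated meets then yields a coupling of cost at most $2\tau$. For $k = K$, no higher-rank layer lies strictly above the moved strip, so the shift can affect only a single maximum of the support of $\Rank_K$ and no meet is created or destroyed, yielding the improved bound $\tau$.

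For achievability, the case $k = K$ is witnessed by $D = [x, y)$ and $E = [x + \tau, y)$ (so $K = 1$), giving $W_{1,1}(D_1, E_1) = \tau = W_{1,1}(D, E)$. For $1 \leq k < K$, I would take $D = (k-1)[0, 20) + [0, 10) + [5, 20)$ and $E = (k-1)[0, 20) + [0, 10) + [5 + \tau, 20)$ with small $\tau > 0$, so that $K = k+1$. A direct computation from the structure of $\Rank_k$ gives $D_k = [0, 10) + [5, 20) - [5, 10)$ and $E_k = [0, 10) + [5 + \tau, 20) - [5 + \tau, 10)$, and \cref{def:graded-wasserstein} applied with the coupling that pairs $[5, 20) \leftrightarrow [5 + \tau, 20)$ and $[5 + \tau, 10) \leftrightarrow [5, 10)$ (the other terms being identical) gives $W_{1,1}(D_k, E_k) = 2\tau = 2 W_{1,1}(D, E)$.

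The principal obstacle is the per-step bound: a single coordinate shift can cause several combinatorial changes in the graded diagram at once (a maximum can split in two with a new meet between them, or a meet can collapse to the diagonal), and the precise configuration depends on how the moved endpoint sits relative to the level-$k$ rank boundary. The proof reduces to careful bookkeeping of maxima and meets along the shift, together with verification that the produced coupling is valid in the sense of \cref{def:graded-wasserstein}, i.e., respects the sign decompositions $D_k^{+}$, $D_k^{-}$, $E_k^{+}$, and $E_k^{-}$.
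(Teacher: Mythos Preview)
Your overall strategy coincides with the paper's: reduce via \cref{prop:geodesic} and the triangle inequality from \cref{prop:metric} to a single coordinate geodesic of length $\tau$, then show $W_{1,1}(D_k,E_k)\le 2\tau$ by tracking how one maximum and at most one meet of the support of $\Rank_k$ shift as the coordinate varies; the paper carries this out via an explicit five-case analysis based on \cref{cor:gpd}, and obtains the $k=K$ improvement exactly as you do, from the absence of negative points in $D_K$ and $E_K$. One inaccuracy in your sketch: at the topology-change times the vanishing positive and negative points need not sit on the diagonal---for instance when $a_{i,j}\to a_{i,j+1}$ the moving point $+[a_{i,j},b_{i,j})$ collides with the stationary point $-[a_{i,j+1},b_{i,j})$ and they cancel off-diagonal---though the coupling you describe (maxima with maxima, meets with meets) still yields cost $2\tau$ after unwinding \cref{def:graded-wasserstein}.

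There is a small but genuine gap in your achievability argument. The theorem asks for a \emph{single} pair $(M,N)$ attaining all of the bounds simultaneously, but your level-$k$ example has $K=k+1$ and gives $D_j=E_j=[0,20)$ for every $j<k$, so $W_{1,1}(D_j,E_j)=0$ there and the bound $2W_{1,1}(D,E)$ is not attained at those levels. The paper's example works for all levels at once: with $A=\sum_{j=1}^{K-1}[j,2K+j)$, $D=A+[K,3K)$ and $E=A+[K+1,3K)$, one has $W_{1,1}(D,E)=1$, while for each $1\le k<K$ the difference $D_k-E_k$ consists of two positive and two negative points all moved by $1$ in the first coordinate, giving $W_{1,1}(D_k,E_k)=2$, and $D_K=[K,2K+1)$, $E_K=[K+1,2K+1)$ give $W_{1,1}(D_K,E_K)=1$.
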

	
	\begin{proof} 
		Let $D$ and $E$ be persistence diagrams with corresponding $k$-th graded persistence diagrams $D_k$ and $E_k$.
		By \cref{prop:geodesic} and the triangle inequality, we can reduce to the case that there is a coordinate geodesic {$\gamma:[0,\tau]\to\DGM$} from $D$ to $E$.
		Assume the coordinate that varies is the first coordinate.
		The other case is similar.

		{For every $t\in [0,\tau]$} let $\gamma_k(t)$ be the $k$-th persistence diagram of $\gamma(t)$.
		By \cref{cor:gpd}, for each $t$, $\gamma_k(t)$ can be written as in \eqref{eq:gpd}. 
		{Note that it suffices to consider the case where $\gamma:[0,\tau]\to\DGM$ has the following properties for all $t$ and $t'$ with $0 \leq t,t' < \tau:$
			\begin{enumerate}
				\item $\gamma_k(t)$ and $\gamma_k(t')$ have the same form given by \eqref{eq:gpd}.
				\item $\gamma_k(t)$ only differs from $\gamma_k(t')$ in that some particular $a_{i,j}$ is the coordinate that varies.
				\item The coordinate $a_{i,j}$ that varies is constrained by the inequalities below \eqref{eq:gpd}.
			\end{enumerate}
			This follows by observing any coordinate geodesic is a concatenation of geodesics or the reverse of geodesics with the above properties. For any $\gamma$ fulfilling the properties, either $\gamma_k(\tau)$ also has the same form \eqref{eq:gpd} or as $t$ approaches $\tau$, $a_{i,j}$ approaches the limit of a constraint in \cref{cor:gpd}. }
		
		We have the following cases, {where $m_i$ and $m_{i-1}$ are defined as in \eqref{eq:gpd}}  (\cref{fig:stability}{, Left}):
		(1) $a_{i,j} \to a_{i,j+1}$,
		(2) $a_{i,j} \to a_{i,j-1}$,
		(3) $m_i=1$ and $a_{i,1} \to b_{i,1}$,
		(4) $j \geq 2$ and $a_{i,j} \to b_{i,j-1}$, and
		(5) $\gamma_k(\tau)$ has the same form as $\gamma_k(t)$ for $0\leq t < \tau$, which includes the case that
		$i \geq 2$ and $a_{i,j} \to b_{i-1,m_{i-1}}$.

		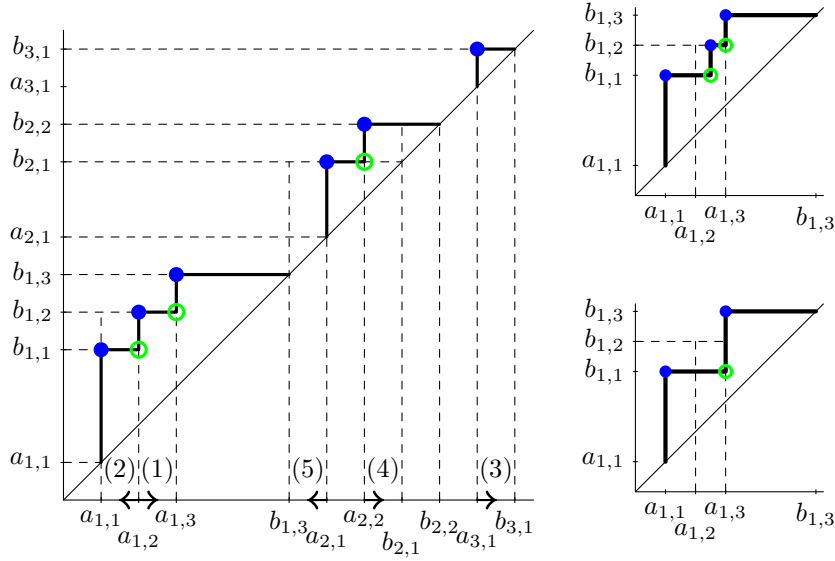
\begin{figure}
			\centering
			\begin{tabular}{cc}
				\multirow[c]{2}{*}[3cm]{%
					\begin{tikzpicture}[line cap=round,line join=round,>=triangle 45,x=1.0cm,y=1.0cm,scale=0.5]
					\draw[color=black] (0,0) -- (12.5,0);
					\draw[color=black] (0,0) -- (0,12.5);
					\draw[color=black] (0,0) -- (12.5,12.5);
					\foreach \x in {1,2,3,6,7,8,9,10,11,12}
					\draw[shift={(\x,0)}] (0pt,2pt) -- (0pt,-2pt);
					\draw[shift={(1,0)}] node[below] {\footnotesize $a_{1,1}$};
					\draw[color=black,dashed,thin] (1,0) -- (1,5);
					\draw[shift={(2,0)}] node[below,inner sep=1em] {\footnotesize $a_{1,2}$};
					\draw[color=black,dashed] (2,0) -- (2,5);
					\draw[shift={(3,0)}] node[below] {\footnotesize $a_{1,3}$};
					\draw[color=black,dashed] (3,0) -- (3,6);	
					\draw[shift={(6,0)}] node[below] {\footnotesize $b_{1,3}$};
					\draw[color=black,dashed] (6,0) -- (6,9);	
					\draw[shift={(7,0)}] node[below,inner sep=1em] {\footnotesize $a_{2,1}$};
					\draw[color=black,dashed] (7,0) -- (7,9);
					\draw[shift={(8,0)}] node[below] {\footnotesize $a_{2,2}$};
					\draw[color=black,dashed] (8,0) -- (8,9);
					\draw[shift={(9,0)}] node[below,inner sep=1em] {\footnotesize $b_{2,1}$};
					\draw[color=black,dashed] (9,0) -- (9,10);	
					\draw[shift={(10,0)}] node[below] {\footnotesize $b_{2,2}$};
					\draw[color=black,dashed] (10,0) -- (10,10);	
					\draw[shift={(11,0)}] node[below,inner sep=1em] {\footnotesize $a_{3,1}$};
					\draw[color=black,dashed] (11,0) -- (11,11);	
					\draw[shift={(12,0)}] node[below] {\footnotesize $b_{3,1}$};
					\draw[color=black,dashed] (12,0) -- (12,12);		  
					\foreach \y in {1,4,5,6,7,9,10,11,12}
					\draw[shift={(0,\y)},color=black] (2pt,0pt) -- (-2pt,0pt);
					\draw[shift={(0,1)}] node[left] {\footnotesize $a_{1,1}$};
					\draw[color=black,dashed] (0,1) -- (1,1);	
					\draw[shift={(0,4)}] node[left] {\footnotesize $b_{1,1}$};
					\draw[color=black,dashed] (0,4) -- (1,4);		
					\draw[shift={(0,5)}] node[left] {\footnotesize $b_{1,2}$};
					\draw[color=black,dashed] (0,5) -- (2,5);		
					\draw[shift={(0,6)}] node[left] {\footnotesize $b_{1,3}$};
					\draw[color=black,dashed] (0,6) -- (3,6);			
					\draw[shift={(0,7)}] node[left] {\footnotesize $a_{2,1}$};
					\draw[color=black,dashed] (0,7) -- (7,7);				
					\draw[shift={(0,9)}] node[left] {\footnotesize $b_{2,1}$};
					\draw[color=black,dashed] (0,9) -- (9,9);					
					\draw[shift={(0,10)}] node[left] {\footnotesize $b_{2,2}$};
					\draw[color=black,dashed] (0,10) -- (9,10);					
					\draw[shift={(0,11)}] node[left] {\footnotesize $a_{3,1}$};
					\draw[shift={(0,12)}] node[left] {\footnotesize $b_{3,1}$};
					\draw[color=black,dashed] (0,12) -- (11,12);					
					\draw[very thick] (1,1) -- (1,4) -- (2,4) -- (2,5) -- (3,5) -- (3,6) -- (6,6);
					\draw[very thick] (7,7) -- (7,9) -- (8,9) -- (8,10) -- (10,10);
					\draw[very thick] (11,11) -- (11,12) -- (12,12);
					\fill [color=blue] (1,4) circle (2mm);
					\draw [very thick,green=blue] (2,4) circle (2mm);
					\fill [color=blue] (2,5) circle (2mm);
					\draw [very thick,green=blue] (3,5) circle (2mm);
					\fill [color=blue] (3,6) circle (2mm);
					\fill [color=blue] (7,9) circle (2mm);
					\draw [very thick,green=blue] (8,9) circle (2mm);
					\fill [color=blue] (8,10) circle (2mm);
					\fill [color=blue] (11,12) circle (2mm);
					\draw[thick,decoration={markings,mark=at position 1 with
						{\arrow[scale=1.5,>=to]{>}}},postaction={decorate}] (2,0) -- (2.5,0);
					\draw[shift={(2.5,0)}] node[above,inner sep=0.5em] {\footnotesize $(1)$};
					\draw[thick,decoration={markings,mark=at position 1 with
						{\arrow[scale=1.5,>=to]{>}}},postaction={decorate}] (2,0) -- (1.5,0);
					\draw[shift={(1.5,0)}] node[above,inner sep=0.5em] {\footnotesize $(2)$};
					\draw[thick,decoration={markings,mark=at position 1 with
						{\arrow[scale=1.5,>=to]{>}}},postaction={decorate}] (11,0) -- (11.5,0);
					\draw[shift={(11.5,0)}] node[above,inner sep=0.5em] {\footnotesize $(3)$};
					\draw[thick,decoration={markings,mark=at position 1 with
						{\arrow[scale=1.5,>=to]{>}}},postaction={decorate}] (8,0) -- (8.5,0);
					\draw[shift={(8.5,0)}] node[above,inner sep=0.5em] {\footnotesize $(4)$};
					\draw[thick,decoration={markings,mark=at position 1 with
						{\arrow[scale=1.5,>=to]{>}}},postaction={decorate}] (7,0) -- (6.5,0);
					\draw[shift={(6.5,0)}] node[above,inner sep=0.5em] {\footnotesize $(5)$};
					\end{tikzpicture}} & \begin{tikzpicture}[line cap=round,line join=round,>=triangle 45,x=1.0cm,y=1.0cm,scale=0.4]
				\draw[color=black] (0,0) -- (6.25,0);
				\draw[color=black] (0,0) -- (0,6.25);
				\draw[color=black] (0,0) -- (6.25,6.25);
				
				\foreach \x in {1,2,3,6}
				\draw[shift={(\x,0)}] (0pt,2pt) -- (0pt,-2pt);
				\draw[shift={(1,0)}] node[below] {\footnotesize $a_{1,1}$};
				\draw[shift={(2,0)}] node[below,inner sep=1em] {\footnotesize $a_{1,2}$};
				\draw[color=black,dashed] (2,0) -- (2,5);				
				\draw[shift={(3,0)}] node[below] {\footnotesize $a_{1,3}$};
				\draw[color=black,dashed] (3,0) -- (3,5);				    
				\draw[shift={(6,0)}] node[below] {\footnotesize $b_{1,3}$};

				\foreach \y in {1,4,5,6}
				\draw[shift={(0,\y)},color=black] (2pt,0pt) -- (-2pt,0pt);
				\draw[shift={(0,1)}] node[left] {\footnotesize $a_{1,1}$};
				\draw[shift={(0,4)}] node[left] {\footnotesize $b_{1,1}$};
				\draw[shift={(0,5)}] node[left] {\footnotesize $b_{1,2}$};
				\draw[color=black,dashed] (0,5) -- (3,5);				    
				\draw[shift={(0,6)}] node[left] {\footnotesize $b_{1,3}$};

				\draw[color=black,ultra thick] (1,1) -- (1,4) -- (2.5,4) -- (2.5,5) -- (3,5) -- (3,6) -- (6,6);

				\fill [color=blue] (1,4) circle (2mm);
				\draw [very thick,green=blue] (2.5,4) circle (2mm);
				\fill [color=blue] (2.5,5) circle (2mm);
				\draw [very thick,green=blue] (3,5) circle (2mm);
				\fill [color=blue] (3,6) circle (2mm);
				\end{tikzpicture} \\
				& \begin{tikzpicture}[line cap=round,line join=round,>=triangle 45,x=1.0cm,y=1.0cm,scale=0.4]
				\draw[color=black] (0,0) -- (6.25,0);
				\draw[color=black] (0,0) -- (0,6.25);
				\draw[color=black] (0,0) -- (6.25,6.25);
				
				\foreach \x in {1,2,3,6}
				\draw[shift={(\x,0)}] (0pt,2pt) -- (0pt,-2pt);
				\draw[shift={(1,0)}] node[below] {\footnotesize $a_{1,1}$};
				\draw[shift={(2,0)}] node[below,inner sep=1em] {\footnotesize $a_{1,2}$};
				\draw[color=black,dashed] (2,0) -- (2,5);				
				\draw[shift={(3,0)}] node[below] {\footnotesize $a_{1,3}$};
				\draw[color=black,dashed] (3,0) -- (3,4);				
				\draw[shift={(6,0)}] node[below] {\footnotesize $b_{1,3}$};

				\foreach \y in {1,4,5,6}
				\draw[shift={(0,\y)},color=black] (2pt,0pt) -- (-2pt,0pt);
				\draw[shift={(0,1)}] node[left] {\footnotesize $a_{1,1}$};
				\draw[shift={(0,4)}] node[left] {\footnotesize $b_{1,1}$};
				\draw[shift={(0,5)}] node[left] {\footnotesize $b_{1,2}$};
				\draw[color=black,dashed] (0,5) -- (3,5);				    
				\draw[shift={(0,6)}] node[left] {\footnotesize $b_{1,3}$};

				\draw[ultra thick] (1,1) -- (1,4) -- (3,4) -- (3,6) -- (6,6);

				\fill [color=blue] (1,4) circle (2mm);
				\draw [very thick,green=blue] (3,4) circle (2mm);
				\fill [color=blue] (3,6) circle (2mm);
				\end{tikzpicture}
			\end{tabular}
			\caption{{\bf Left}: An example of a $k$-th graded persistence diagram $\pd_k$. Dark blue disks indicate where $\pd_k$ evaluates to 1 and light green circles indicate where $\pd_k$ evaluates to -1. Vertices are labeled using the notation in \cref{cor:gpd} with $\ell=3$. Examples of the five cases in the proof of \cref{thm:stability} are indicated with labeled arrows. For example,  in case (1) only the first connected component on the left changes as $t$ changes. {\bf Upper right}: The first connected component of $\gamma_k(t)$ for $0<t<\tau$ in case (1). {\bf Lower right}: The first connected component of $\gamma_k(\tau)$ in case (1). }
			\label{fig:stability}
		\end{figure}

		In case (1),
		let $A_k$ be the points in common in $D_k$ and $E_k$.
		Then $D_k = A_k - [a_{i,j},b_{i,j-1}) + [a_{i,j},b_{i,j}) - [a_{i,j+1},b_{i,j})$ and $E_k = A_k - [a_{i,j+1},b_{i,j-1})$.
		For $0 \leq t \leq \tau$,
		let $x(t) = a_{i,j}(1 - \frac{t}{\tau}) + \frac{t}{\tau} a_{i,j+1}$.
		Then for $0 \leq t \leq \tau$,
		\[\gamma_k(t) = A_k - [x(t),b_{i,j-1}) + [x(t),b_{i,j}) - [a_{i,j+1},b_{i,j}),\]
		and in particular $\gamma_k(0) = D_k$ and $\gamma_k(\tau) = E_k$.
		See \cref{fig:stability}, {upper right} and lower right.
		For $0 \leq s \leq t < \tau$, 
		\begin{align*}W_{1,1}&(\gamma_k(s),\gamma_k(t))  = \\
			W_{1,1}&(A_k + [x(t),b_{i,j-1}) + [x(s),b_{i,j}) + [a_{i,j+1},b_{i,j}),
			A_k + [x(s),b_{i,j-1}) + [x(t),b_{i,j}) + [a_{i,j+1},b_{i,j}))\text{.}
		\end{align*}
		For visualizations of the couplings, see \cref{fig:case1}. This distance is realized by a coupling that matches identical points and matches  $[x(t),b_{i,j-1})$ with  $[x(s),b_{i,j-1})$ and
		$[x(s),b_{i,j})$ with $[x(t),b_{i,j})$.
		We obtain a distance of $2 (x(t)-x(s)) = 2(t-s)$.
		For $0 \leq s < \tau$, 
		\begin{equation*}W_{1,1}(\gamma_k(s),\gamma_k(\tau)) = 
			W_{1,1}(A_k + [x(s),b_{i,j}) + [a_{i,j+1},b_{i,j-1}),
			A_k  + [x(s),b_{i,j-1}) + [a_{i,j+1},b_{i,j}))\text{.}\end{equation*} 
		This distance is realized by a coupling that matches identical points and matches  $[a_{i,j+1},b_{i,j-1})$ with $[x(s),b_{i,j-1})$ and
		$[x(s),b_{i,j})$ with $[a_{i,j+1},b_{i,j})$.
		We obtain a distance of $2(a_{i,j+1}-x(s)) = 2 (\tau-s)$.                       
		
		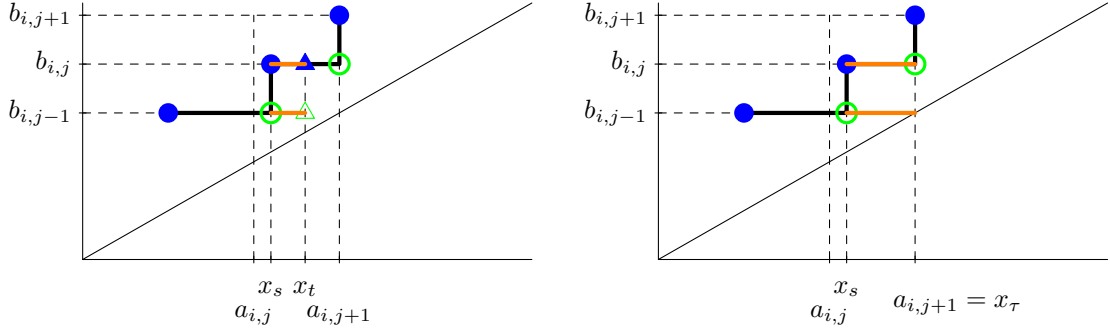
\begin{figure}
			\centering
			\begin{tikzpicture}[line cap=round,line join=round,>=triangle 45,x=1.75cm,y=1.0cm,scale=0.65]
			\draw[color=black] (0,0) -- (5.25,0);
			\draw[color=black] (0,0) -- (0,5.25);
			\draw[color=black] (0,0) -- (5.25,5.25);
			
			\foreach \x in {2,2.2,2.6,3}
			\draw[shift={(\x,0)}] (0pt,2pt) -- (0pt,-2pt);
			\draw[shift={(2,-.3)}] node[below,inner sep=1em] {\footnotesize $a_{i,j}$};
			\draw[shift={(2.2,.2)}] node[below,inner sep=1em] {\footnotesize $x_s$};
			\draw[color=black,dashed] (2.2,0) -- (2.2,3);
			\draw[shift={(2.6,.2)}] node[below,inner sep=1em] {\footnotesize $x_t$};
			\draw[color=black,dashed] (2.6,0) -- (2.6,4);
			\draw[color=black,dashed] (2,0) -- (2,5);				
			\draw[shift={(3,-.3)}] node[below,inner sep=1em] {\footnotesize $a_{i,j+1}$};
			\draw[color=black,dashed] (3,0) -- (3,5);

			\foreach \y in {3,4,5}
			\draw[shift={(0,\y)},color=black] (2pt,0pt) -- (-2pt,0pt);
			\draw[shift={(0,3)}] node[left] {\footnotesize $b_{i,j-1}$};
			\draw[color=black,dashed] (0,3) -- (2,3);				
			\draw[shift={(0,4)}] node[left] {\footnotesize $b_{i,j}$};
			\draw[color=black,dashed] (0,4) -- (2.2,4);	
			\draw[shift={(0,5)}] node[left] {\footnotesize $b_{i,j+1}$};
			\draw[color=black,dashed] (0,5) -- (3,5);				
			
			\draw[color=black,ultra thick] (1,3) -- (2.2,3) -- (2.2,4) -- (3,4) -- (3,5);

			\fill [color=blue] (1,3) circle (2mm);
			\draw [very thick,green=blue] (2.2,3) circle (2mm);
			\fill [color=blue] (2.2,4) circle (2mm);
			\draw [mark=triangle,mark options={color=green},mark size=2.2mm] plot coordinates {(2.6,3)};
			\draw [mark=triangle*,mark options={color=blue},mark size=2.2mm] plot coordinates {(2.6,4)};
			\draw [very thick,green=blue] (3,4) circle (2mm);
			\fill [color=blue] (3,5) circle (2mm);
			
			\draw[color=orange,ultra thick] (2.2,4) -- (2.6,4);
			\draw[color=orange,ultra thick] (2.2,3) -- (2.6,3);
			\end{tikzpicture}
			\quad
			\begin{tikzpicture}[line cap=round,line join=round,>=triangle 45,x=1.75cm,y=1.0cm,scale=0.65]
			\draw[color=black] (0,0) -- (5.25,0);
			\draw[color=black] (0,0) -- (0,5.25);
			\draw[color=black] (0,0) -- (5.25,5.25);
			
			\foreach \x in {2,2.2,3}
			\draw[shift={(\x,0)}] (0pt,2pt) -- (0pt,-2pt);
			\draw[shift={(2,-.3)}] node[below,inner sep=1em] {\footnotesize $a_{i,j}$};
			\draw[shift={(2.2,.2)}] node[below,inner sep=1em] {\footnotesize $x_s$};
			\draw[color=black,dashed] (2.2,0) -- (2.2,3);
			\draw[color=black,dashed] (2,0) -- (2,5);				
			\draw[shift={(3.5,-.1)}] node[below,inner sep=1em] {\footnotesize $a_{i,j+1} = x_\tau$};
			\draw[color=black,dashed] (3,0) -- (3,5);				    									
			
			\foreach \y in {3,4,5}
			\draw[shift={(0,\y)},color=black] (2pt,0pt) -- (-2pt,0pt);
			\draw[shift={(0,3)}] node[left] {\footnotesize $b_{i,j-1}$};
			\draw[color=black,dashed] (0,3) -- (2,3);				
			\draw[shift={(0,4)}] node[left] {\footnotesize $b_{i,j}$};
			\draw[color=black,dashed] (0,4) -- (2.2,4);	
			\draw[shift={(0,5)}] node[left] {\footnotesize $b_{i,j+1}$};
			\draw[color=black,dashed] (0,5) -- (3,5);				
			
			\draw[color=black,ultra thick] (1,3) -- (2.2,3) -- (2.2,4) -- (3,4) -- (3,5);

			\fill [color=blue] (1,3) circle (2mm);
			\draw [very thick,green=blue] (2.2,3) circle (2mm);
			\fill [color=blue] (2.2,4) circle (2mm);
			\draw [very thick,green=blue] (3,4) circle (2mm);
			\fill [color=blue] (3,5) circle (2mm);
			
			\draw[color=orange,ultra thick] (2.2,4) -- (3,4);
			\draw[color=orange,ultra thick] (2.2,3) -- (3,3);
			\end{tikzpicture}
			\caption{Couplings for case (1). Circles indicate $\gamma_k(s)$ and triangles $\gamma_k(t)$. Solid blue points evaluate to +1 and hollow green points evaluate to -1. Orange lines indicate coupled points. All points without orange lines are coupled to themselves.\label{fig:case1}}
		\end{figure}
		
		In case (4),
		let $A_k$ be the points in common in $D_k$ and $E_k$.
		Then $D_k = A_k - [a_{i,j},b_{i,j-1}) + [a_{i,j},b_{i,j})$ and
		$E_k = A_k + [b_{i,j-1},b_{i,j})$.
		For $0 \leq t \leq \tau$,
		let $x(t) = a_{i,j}(1 - \frac{t}{\tau}) + \frac{t}{\tau} b_{i,j-1}$.
		Then for $0 \leq t < \tau$,
		\[\gamma_k(t) = A_k - [x(t),b_{i,j-1}) + [x(t),b_{i,j})\]
		(with $\gamma_k(0) = D_k$) and $\gamma_k(\tau) = E_k$.
		For $0 \leq s \leq t < \tau$, 
		\begin{equation*}W_{1,1}(\gamma_k(s),\gamma_k(t)) = 
			W_{1,1}(A_k + [x(t),b_{i,j-1}) + [x(s),b_{i,j}), A_k + [x(s),b_{i,j-1}) + [x(t),b_{i,j}))\text{.}\end{equation*}
		This distance is realized by a coupling that matches identical points and matches  $[x(t),b_{i,j-1})$ with  $[x(s),b_{i,j-1})$ and
		$[x(s),b_{i,j})$ with $[x(t),b_{i,j})$.
		For visualizations of the couplings, see \cref{fig:case2}.
		We obtain a distance of $2 (x(t)-x(s)) = 2 (t-s)$.
		For $0 \leq s < \tau$, 
		\begin{equation*}W_{1,1}(\gamma_k(s),\gamma_k(\tau)) = 
			W_{1,1}(A_k + [x(s),b_{i,j}), A_k + [b_{i,j-1},b_{i,j}) + [x(t),b_{i,j-1}) )\text{.}\end{equation*}
		This distance is realized by a coupling that matches identical points and matches  $[x(s),b_{i,j})$ with  $[b_{i,j-1},b_{i,j})$ and
		$[x(s),b_{i,j-1})$ with $[b_{i,j-1},b_{i,j-1})$.
		We obtain a distance of $2(b_{i,j-1}-x(s)) = 2 (\tau-s)$.
		
		Case (2) is similar to case (1). Case (3) is similar to case (4) but easier. Case (5) is similar to case (3) but easier still. Therefore $W_{1,1}(D_k,E_k) \leq 2 \tau$.

		\begin{figure}
			\centering
			\begin{tikzpicture}[line cap=round,line join=round,>=triangle 45,x=1.75cm,y=1.0cm,scale=0.65]
			\draw[color=black] (0,0) -- (5.25,0);
			\draw[color=black] (0,0) -- (0,5.25);
			\draw[color=black] (0,0) -- (5.25,5.25);
			
			\foreach \x in {2,2.2,2.6,3}
			\draw[shift={(\x,0)}] (0pt,2pt) -- (0pt,-2pt);
			\draw[shift={(2,-.3)}] node[below,inner sep=1em] {\footnotesize $a_{i,j}$};
			\draw[shift={(2.2,.2)}] node[below,inner sep=1em] {\footnotesize $x_s$};
			\draw[color=black,dashed] (2.2,0) -- (2.2,3);
			\draw[shift={(2.6,.2)}] node[below,inner sep=1em] {\footnotesize $x_t$};
			\draw[shift={(3,-.3)}] node[below,inner sep=1em] {\footnotesize $b_{i,j-1}$};			
			\draw[color=black,dashed] (2.6,0) -- (2.6,4);
			\draw[color=black,dashed] (2,0) -- (2,4);				
			\draw[color=black,dashed] (3,0) -- (3,4);

			\foreach \y in {3,4}
			\draw[shift={(0,\y)},color=black] (2pt,0pt) -- (-2pt,0pt);
			\draw[shift={(0,3)}] node[left] {\footnotesize $b_{i,j-1}$};
			\draw[color=black,dashed] (0,3) -- (3,3);				
			\draw[shift={(0,4)}] node[left] {\footnotesize $b_{i,j}$};
			\draw[color=black,dashed] (0,4) -- (2.2,4);

			\draw[color=black,ultra thick] (1,3) -- (2.2,3) -- (2.2,4) -- (3,4) -- (4,4);

			\fill [color=blue] (1,3) circle (2mm);
			\draw [very thick,green=blue] (2.2,3) circle (2mm);
			\fill [color=blue] (2.2,4) circle (2mm);
			\draw [mark=triangle,mark options={color=green},mark size=2.2mm] plot coordinates {(2.6,3)};
			\draw [mark=triangle*,mark options={color=blue},mark size=2.2mm] plot coordinates {(2.6,4)};

			\draw[color=orange,ultra thick] (2.2,4) -- (2.6,4);
			\draw[color=orange,ultra thick] (2.2,3) -- (2.6,3);
			\end{tikzpicture}
			\quad
			\begin{tikzpicture}[line cap=round,line join=round,>=triangle 45,x=1.75cm,y=1.0cm,scale=0.65]
			\draw[color=black] (0,0) -- (5.25,0);
			\draw[color=black] (0,0) -- (0,5.25);
			\draw[color=black] (0,0) -- (5.25,5.25);
			
			\foreach \x in {2,2.2,3}
			\draw[shift={(\x,0)}] (0pt,2pt) -- (0pt,-2pt);
			\draw[shift={(2,-.3)}] node[below,inner sep=1em] {\footnotesize $a_{i,j}$};
			\draw[shift={(2.2,.2)}] node[below,inner sep=1em] {\footnotesize $x_s$};
			\draw[color=black,dashed] (2.2,0) -- (2.2,3);
			\draw[shift={(3.2,-.3)}] node[below,inner sep=1em] {\footnotesize $b_{i,j-1} = x_\tau$};
			\draw[color=black,dashed] (2,0) -- (2,4);
			\draw[color=black,dashed] (3,0) -- (3,4);

			\foreach \y in {3,4}
			\draw[shift={(0,\y)},color=black] (2pt,0pt) -- (-2pt,0pt);
			\draw[shift={(0,3)}] node[left] {\footnotesize $b_{i,j-1}$};
			\draw[color=black,dashed] (0,3) -- (2,3);				
			\draw[shift={(0,4)}] node[left] {\footnotesize $b_{i,j}$};
			\draw[color=black,dashed] (0,4) -- (2.2,4);

			\draw[color=black,ultra thick] (1,3) -- (2.2,3) -- (2.2,4) -- (3,4) -- (4,4);

			\fill [color=blue] (1,3) circle (2mm);
			\draw [very thick,green=blue] (2.2,3) circle (2mm);
			\fill [color=blue] (2.2,4) circle (2mm);
			\draw [mark=triangle*,mark options={color=blue},mark size=2.2mm] plot coordinates {(3,4)};

			\draw[color=orange,ultra thick] (2.2,4) -- (3,4);
			\draw[color=orange,ultra thick] (2.2,3) -- (3,3);
			\end{tikzpicture}
			\caption{Couplings for case (4). Circles indicate $\gamma_k(s)$ and triangles $\gamma_k(t)$. Orange lines indicate coupled points. All points without orange lines are coupled to themselves.\label{fig:case2}}
		\end{figure}
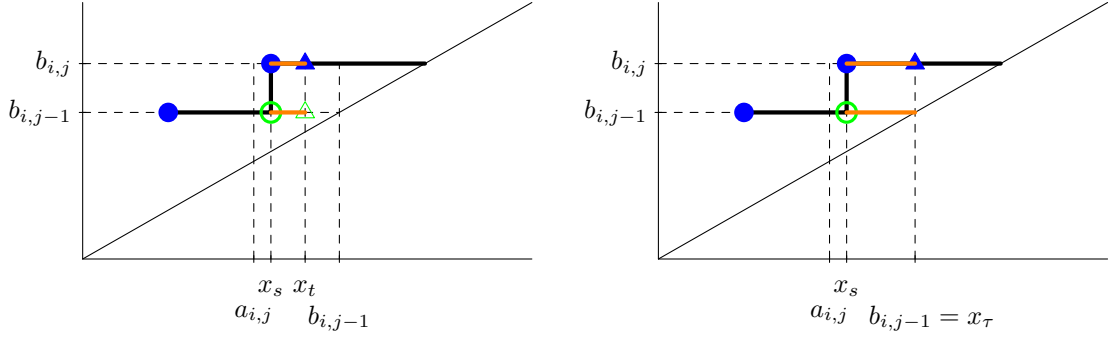
		
		Since $\rank(M),\rank(N) \leq K$, for $k > K$, $\rank_k(M) = \rank_k(N) = 0$ and thus $D_k = E_k = 0$. In addition, since $\rank(M), \rank(N) \leq K$, $D_K$ and $E_K$ have only positive points. Thus the coordinate geodesics only move one point in $D_K$ and $E_K$ at a time. Hence $W_{1,1}(D_K,E_K) \leq W_{1,1}(D,E)$.
		
		We first exhibit that the Theorem's bounds are attained for an example with $K=3$, then move to the general case.
		See \cref{fig:bounds}.
		Let $A = [1,7) + [2,8)$ and consider $D = A + [3,9)$ and $E = A + [4,9)$. Applying \cref{prop:metric} gives $W_{1,1}(D,E) = 1$. Also, $D_1 = A_1 - [3,8) + [3,9)$ and $E_1 = A_1 - [4,8) + [4,9)$, so $W_{1,1}(D_1,E_1) = 2$. The minimum cost coupling pairs $[3,8)$ to $[4,8)$ and $[3,9)$ to $[4,9)$. Similarly, $D_2 = A_2 - [3,7) + [3,8)$, $E_2 = A_2 - [4,7) + [4,8)$, and $W_{1,1}(D_2,E_2) = 2$. Finally, $D_3 = [3,7)$ and $E_3 = [4,7)$ so $W_{1,1}(D_3,E_3) = 1$. 
		
		\begin{figure}
			\centering
			\begin{tikzpicture}[line cap=round,line join=round,>=triangle 45,x=1.0cm,y=1.0cm,scale=0.65]
			\draw[color=black] (0,0) -- (7.25,0);
			\draw[color=black] (0,0) -- (0,7.25);
			\draw[color=black] (0,0) -- (7.25,7.25);
			
			\foreach \x in {1,2,3,4}
			\draw[shift={(\x,0)}] (0pt,2pt) -- (0pt,-2pt);
			\foreach \x in {1,2,3,4}
			\draw[shift={(\x,0)},color=black] node[below] {\footnotesize $\x$};
			
			\foreach \y in {7,8,9}
			\draw[shift={(0,\y-2)},color=black] (2pt,0pt) -- (-2pt,0pt);
			\foreach \y in {7,8,9}
			\draw[shift={(0,\y-2)},color=black] node[left] {\footnotesize $\y$};
			
			\draw [mark=square*,mark options={color=blue},mark size=4pt] plot coordinates {(1,5)};
			\draw [mark=square*,mark options={color=blue},mark size=4pt] plot coordinates {(2,6)};
			\fill [color=blue] (3,7) circle (2mm);
			\draw [mark=triangle*,mark options={color=blue},mark size=2.2mm] plot coordinates {(4,7)};
			
			\end{tikzpicture}	
			
			\begin{tikzpicture}[line cap=round,line join=round,>=triangle 45,x=1.0cm,y=1.0cm,scale=0.5]
			\draw[color=black] (0,0) -- (7.25,0);
			\draw[color=black] (0,0) -- (0,7.25);
			\draw[color=black] (0,0) -- (7.25,7.25);
			
			\foreach \x in {1,2,3,4}
			\draw[shift={(\x,0)}] (0pt,2pt) -- (0pt,-2pt);
			\foreach \x in {1,2,3,4}
			\draw[shift={(\x,0)},color=black] node[below] {\footnotesize $\x$};
			
			\foreach \y in {7,8,9}
			\draw[shift={(0,\y-2)},color=black] (2pt,0pt) -- (-2pt,0pt);
			\foreach \y in {7,8,9}
			\draw[shift={(0,\y-2)},color=black] node[left] {\footnotesize $\y$};
			
			\draw [mark=square*,mark options={color=blue},mark size=4pt] plot coordinates {(1,5)};
			\draw [mark=square,mark options={color=green},mark size=4pt] plot coordinates {(2,5)};
			\draw [mark=square*,mark options={color=blue},mark size=4pt] plot coordinates {(2,6)};	
			\draw [very thick,green=blue] (3,6) circle (2mm);			
			\fill [color=blue] (3,7) circle (2mm);
			\draw [mark=triangle*,mark options={color=blue},mark size=2.2mm] plot coordinates {(4,7)};
			\draw [mark=triangle,mark options={color=green},mark size=2.2mm] plot coordinates {(4,6)};
			
			\draw[color=orange,ultra thick] (3,6) -- (4,6);
			\draw[color=orange,ultra thick] (3,7) -- (4,7);
			
			\end{tikzpicture}
			\begin{tikzpicture}[line cap=round,line join=round,>=triangle 45,x=1.0cm,y=1.0cm,scale=0.5]
			\draw[color=black] (0,0) -- (7.25,0);
			\draw[color=black] (0,0) -- (0,7.25);
			\draw[color=black] (0,0) -- (7.25,7.25);
			
			\foreach \x in {1,2,3,4}
			\draw[shift={(\x,0)}] (0pt,2pt) -- (0pt,-2pt);
			\foreach \x in {1,2,3,4}
			\draw[shift={(\x,0)},color=black] node[below] {\footnotesize $\x$};
			
			\foreach \y in {7,8,9}
			\draw[shift={(0,\y-2)},color=black] (2pt,0pt) -- (-2pt,0pt);
			\foreach \y in {7,8,9}
			\draw[shift={(0,\y-2)},color=black] node[left] {\footnotesize $\y$};
			
			\draw [mark=square*,mark options={color=blue},mark size=4pt] plot coordinates {(2,5)};
			\draw [very thick,green=blue] (3,5) circle (2mm);
			\fill [color=blue] (3,6) circle (2mm);
			\draw [mark=triangle*,mark options={color=blue},mark size=2.2mm] plot coordinates {(4,6)};
			\draw [mark=triangle,mark options={color=green},mark size=2.2mm] plot coordinates {(4,5)};								
			
			\draw[color=orange,ultra thick] (3,5) -- (4,5);
			\draw[color=orange,ultra thick] (3,6) -- (4,6);
			
			\end{tikzpicture}		
			\begin{tikzpicture}[line cap=round,line join=round,>=triangle 45,x=1.0cm,y=1.0cm,scale=0.5]
			\draw[color=black] (0,0) -- (7.25,0);
			\draw[color=black] (0,0) -- (0,7.25);
			\draw[color=black] (0,0) -- (7.25,7.25);
			
			\foreach \x in {1,2,3,4}
			\draw[shift={(\x,0)}] (0pt,2pt) -- (0pt,-2pt);
			\foreach \x in {1,2,3,4}
			\draw[shift={(\x,0)},color=black] node[below] {\footnotesize $\x$};
			
			\foreach \y in {7,8,9}
			\draw[shift={(0,\y-2)},color=black] (2pt,0pt) -- (-2pt,0pt);
			\foreach \y in {7,8,9}
			\draw[shift={(0,\y-2)},color=black] node[left] {\footnotesize $\y$};
			
			\fill [color=blue] (3,5) circle (2mm);
			\draw [mark=triangle*,mark options={color=blue},mark size=2.2mm] plot coordinates {(4,5)};			
			
			\draw[color=orange,ultra thick] (3,5) -- (4,5);
			
			\end{tikzpicture}
			\caption{{\bf Top:} Example with $K=3$ that attains the bounds in \cref{thm:stability}. The persistence diagram $A$ is given by the squares, persistence diagram $D$ is given by the squares and the circle, and persistence diagram $E$ is given by the squares and the triangles. {\bf Bottom:} The graded persistence diagrams $A_k,D_k$, and $E_k$ for $k=1,2,3$ (left-to-right). Orange lines indicate coupled points. All points without orange lines are coupled to themselves.}\label{fig:bounds} 
		\end{figure}
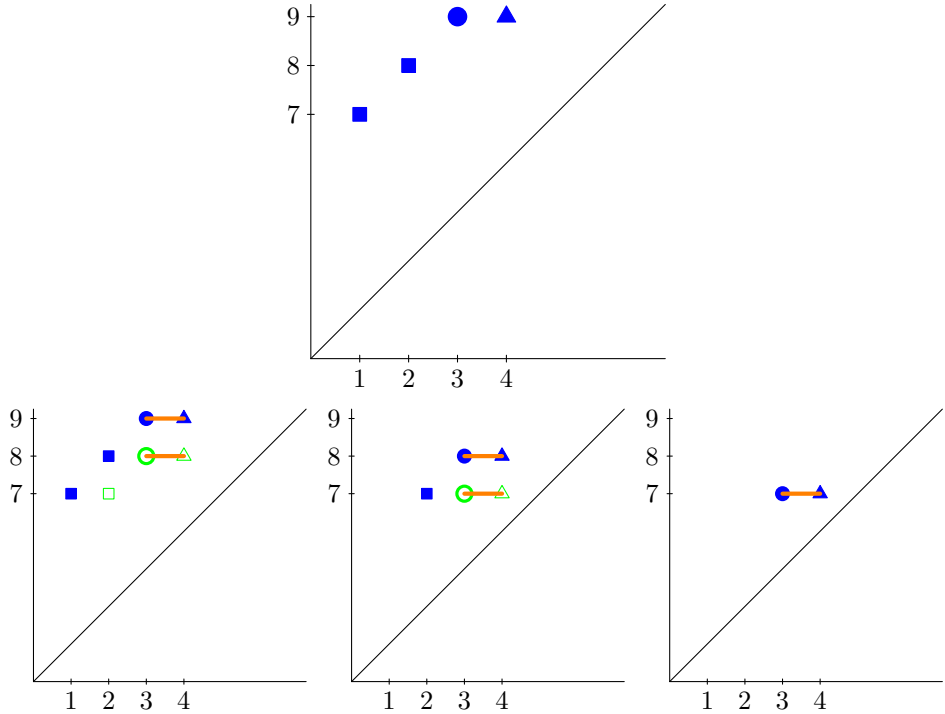
		
		In the general case, let $A =[1,2K+1)+[2,2K+2)+\dots+[K-1,3K-1)$,
		$D = A + [K,3K)$, and $E = A + [K+1,3K)$. Then $D$ and $E$ only differ by $[K,3K)$ and $[K+1,3K)$, so $W_{1,1}(D,E)=1$. For $1 \leq k < K$, $D_k = A_k - [K,3K-k) + [K,3K+1-k)$ and $E_k = A_k - [K+1,3K-k) + [K+1,3K+1-k)$, so $W_{1,1}(D_k,E_k) = 2$. Finally $D_K = [K,2K+1)$ and $E_K = [K+1,2K+1)$. So $W_{1,1}(D_K,E_K) = 1$.
	\end{proof}
	
	\begin{remark}
		\cref{thm:stability} may be combined with Skraba and Turner's recent Wasserstein stability theorems~\cite{Skraba:2020} to obtain Wasserstein stability of the graded persistence diagrams in various settings.
	\end{remark}
	
	Let $M$ and $N$ be persistence modules with
	persistence diagrams $D,E: \Dgm \to \mz_{\geq 0}$ and 
	$k$-th graded persistence diagrams $D_k,E_k: \Dgm \to \mz$ for $k \geq 1$.
	Let $K$ be the maximum of $\rank(M)$ and $\rank(N)$.

	\begin{theorem} \label{thm:bounds}
		We have
		\[ W_{1,1}(D,E) \leq \sum_{k=1}^K W_{1,1}(D_k,E_k) \leq (2K-1) W_{1,1}(D,E)
		\]
		and these bounds are sharp.
	\end{theorem}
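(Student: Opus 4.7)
The plan is to handle the upper bound as a direct corollary of the stability theorem (\cref{thm:stability}) and the lower bound via a telescoping argument using the Consistency Theorem and the translation invariance of $W_{1,1}$ from \cref{prop:metric}.

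For the upper bound, I would apply \cref{thm:stability} term by term: for $1 \leq k < K$ we have $W_{1,1}(D_k,E_k) \leq 2 W_{1,1}(D,E)$, for $k = K$ we have $W_{1,1}(D_K,E_K) \leq W_{1,1}(D,E)$, and for $k > K$ the graded diagrams vanish. Summing yields
\[
\sum_{k=1}^K W_{1,1}(D_k,E_k) \leq 2(K-1) W_{1,1}(D,E) + W_{1,1}(D,E) = (2K-1) W_{1,1}(D,E).
\]

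For the lower bound, I would use the Consistency Theorem (\cref{thm:consistency}), which gives $D = \sum_{k=1}^K D_k$ and $E = \sum_{k=1}^K E_k$ pointwise on $\Dgm$. I would then define intermediate functions $F_0 = D$ and, for $1 \leq j \leq K$,
\[
F_j = \sum_{k=1}^j E_k + \sum_{k=j+1}^K D_k,
\]
so that $F_K = E$ and $F_j - F_{j-1} = E_j - D_j$. Applying translation invariance from \cref{prop:metric} to the common summand $R_j := \sum_{k<j} E_k + \sum_{k>j} D_k$ gives $W_{1,1}(F_{j-1},F_j) = W_{1,1}(D_j + R_j, E_j + R_j) = W_{1,1}(D_j,E_j)$. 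Combined with the triangle inequality (which is valid for $W_{1,1}$ by \cref{prop:metric}), this yields
\[
W_{1,1}(D,E) = W_{1,1}(F_0,F_K) \leq \sum_{j=1}^K W_{1,1}(F_{j-1},F_j) = \sum_{j=1}^K W_{1,1}(D_j,E_j).
\]

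For sharpness, the upper bound is already attained by the example exhibited at the end of the proof of \cref{thm:stability}: with $A = [1,2K+1) + [2,2K+2) + \cdots + [K-1,3K-1)$, $D = A + [K,3K)$, and $E = A + [K+1, 3K)$, that proof shows $W_{1,1}(D,E) = 1$ while $W_{1,1}(D_k,E_k) = 2$ for $1 \leq k < K$ and $W_{1,1}(D_K,E_K) = 1$, summing to $2K-1$. For the lower bound, any pair of persistence modules with $K = 1$ works trivially, since then $D = D_1$ and $E = E_1$. The main subtlety, and really the only nonroutine point, is verifying that the translation invariance of $W_{1,1}$ applies to the $\mz$-valued (signed) functions $D_j + R_j$ and $E_j + R_j$ rather than only to honest persistence diagrams; but this is precisely the content of \cref{prop:metric} in the form stated.
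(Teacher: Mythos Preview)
Your proposal is correct and follows essentially the same approach as the paper. The paper also derives the upper bound directly from \cref{thm:stability}, proves the lower bound by combining the Consistency Theorem with the triangle inequality and translation invariance from \cref{prop:metric} (your telescoping with the $F_j$ is exactly the induction the paper sketches), and establishes sharpness via the same example from \cref{thm:stability} for the upper bound and interval modules (i.e., the $K=1$ case) for the lower bound.
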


	\begin{proof}
		The right hand inequality and the fact that it is sharp are an immediate consequence of \cref{thm:stability}.
		Next, we prove the left hand inequality.
		By \cref{thm:consistency}, $D = \sum_{k=1}^K D_k$ and $E = \sum_{k=1}^K E_k$.
		Let $A,A',B,B':\Dgm\to\mz$ be finitely supported functions.
		Then by the triangle inequality and \cref{prop:metric},
		$W_{1,1}(A+A',B+B') \leq W_{1,1}(A+A',B+A') + W_{1,1}(B+A',B+B')
		= W_{1,1}(A,B) + W_{1,1}(A',B')$.
		By induction, we obtain the left inequality.
		To see that the left inequality is sharp, take $M$ and $N$ to be interval modules.
	\end{proof}

	\subsection*{Acknowledgments}
	
	The authors would like to thank the anonymous referees whose many comments considerably improved our manuscript. This research was partially supported by the Southeast Center for Mathematics and Biology, an NSF-Simons Research Center for Mathematics of Complex Biological Systems, under National Science Foundation Grant No. DMS-1764406 and Simons Foundation Grant No. 594594.
	This material is based upon work supported by, or in part by, the Army Research Laboratory and the Army Research Office under contract/grant number
	W911NF-18-1-0307.
	

\begin{thebibliography}{10}
	
	\bibitem{bubeniklandscapes}
	Peter Bubenik.
	\newblock Statistical topological data analysis using persistence landscapes.
	\newblock {\em The Journal of Machine Learning Research}, 16(1):77--102, 2015.
	
	\bibitem{Bubenik:pl-properties}
	Peter Bubenik.
	\newblock The persistence landscape and some of its properties.
	\newblock In Nils~A. Baas, Gunnar~E. Carlsson, Gereon Quick, Markus Szymik, and
	Marius Thaule, editors, {\em Topological Data Analysis}, pages 97--117, Cham,
	2020. Springer International Publishing.
	
	\bibitem{bubenik2015metrics}
	Peter Bubenik, Vin De~Silva, and Jonathan Scott.
	\newblock Metrics for generalized persistence modules.
	\newblock {\em Foundations of Computational Mathematics}, 15(6):1501--1531,
	2015.
	
	\bibitem{bubenikDlotko}
	Peter Bubenik and Pawel Dlotko.
	\newblock A persistence landscapes toolbox for topological statistics.
	\newblock {\em Journal of Symbolic Computation}, 78:91 -- 114, 2017.
	
	\bibitem{be:virtual}
	Peter Bubenik and Alex Elchesen.
	\newblock Virtual persistence diagrams, signed measures, and wasserstein
	distance.
	\newblock {\em arXiv preprint arXiv:2012.10514}, 2020.
	
	\bibitem{Chazal:2015b}
	Fr{\'e}d{\'e}ric Chazal, Brittany~Terese Fasy, Fabrizio Lecci, Bertrand Michel,
	Alessandro Rinaldo, and Larry Wasserman.
	\newblock Subsampling methods for persistent homology.
	\newblock In {\em Proceedings of the 32nd International Conference on Machine
		Learning, Lille, France}, volume~37. JMLR: W\&CP, 2015.
	
	\bibitem{Chazal:2015c}
	Fr{{\'e}}d{{\'e}}ric Chazal, Brittany~Terese Fasy, Fabrizio Lecci, Alessandro
	Rinaldo, and Larry Wasserman.
	\newblock Stochastic convergence of persistence landscapes and silhouettes.
	\newblock {\em J. Comput. Geom.}, 6(2):140--161, 2015.
	
	\bibitem{cseh:stability}
	David Cohen-Steiner, Herbert Edelsbrunner, and John Harer.
	\newblock Stability of persistence diagrams.
	\newblock {\em Discrete Comput. Geom.}, 37(1):103--120, 2007.
	
	\bibitem{HarerBook}
	Herbert Edelsbrunner and John Harer.
	\newblock {\em Computational topology: an introduction}.
	\newblock American Mathematical Soc., 2010.
	
	\bibitem{Gabriel1972}
	Peter Gabriel.
	\newblock Unzerlegbare darstellungen i.
	\newblock {\em Manuscripta Mathematica}, 6(1):71--103, Mar 1972.
	
	\bibitem{Kim:2019}
	Woojin Kim and Facundo Memoli.
	\newblock Generalized persistence diagrams for persistence modules over posets.
	\newblock arXiv:1810.11517 [math.AT], 2018.
	
	\bibitem{McCleary:2019b}
	Alex McCleary and Amit Patel.
	\newblock Multiparameter persistence diagrams.
	\newblock arXiv:1905.13220 [math.AT], 2019.
	
	\bibitem{McCleary:2018}
	Alex McCleary and Amit Patel.
	\newblock Bottleneck stability for generalized persistence diagrams.
	\newblock {\em Proc. Amer. Math. Soc.}, 148(7):3149--3161, 2020.
	
	\bibitem{Patel:2018}
	Amit Patel.
	\newblock Generalized persistence diagrams.
	\newblock {\em Journal of Applied and Computational Topology}, 1(3):397--419,
	Jun 2018.
	
	\bibitem{Puuska:2017}
	Ville Puuska.
	\newblock Erosion distance for generalized persistence modules.
	\newblock {\em Homology Homotopy Appl.}, 22(1):233--254, 2020.
	
	\bibitem{Reininghaus:2015}
	Jan Reininghaus, Stefan Huber, Ulrich Bauer, and Roland Kwitt.
	\newblock A stable multi-scale kernel for topological machine learning.
	\newblock In {\em Proc. 2015 IEEE Conf. Comp. Vision \& Pat. Rec. (CVPR '15)},
	2015.
	
	\bibitem{rota1964foundations}
	Gian-Carlo Rota.
	\newblock On the foundations of combinatorial theory. {I}. {T}heory of
	{M}{\"o}bius functions.
	\newblock {\em Z. Wahrscheinlichkeitstheorie und Verw. Gebiete}, 2:340--368
	(1964), 1964.
	
	\bibitem{Skraba:2020}
	Primoz Skraba and Katharine Turner.
	\newblock Wasserstein stability for persistence diagrams.
	\newblock arXiv:2006.16824 [math.AT], 06 2020.
	
	\bibitem{StanleyEC}
	Richard~P. Stanley.
	\newblock {\em Enumerative combinatorics. {V}olume 1}, volume~49 of {\em
		Cambridge Studies in Advanced Mathematics}.
	\newblock Cambridge University Press, Cambridge, second edition, 2012.
	
	\bibitem{Vipond:2018}
	Oliver Vipond.
	\newblock Multiparameter persistence landscapes.
	\newblock {\em Journal of Machine Learning Research}, 21(61):1--38, 2020.
	
	\bibitem{zomorodianCarlsson:computingPH}
	Afra Zomorodian and Gunnar Carlsson.
	\newblock Computing persistent homology.
	\newblock {\em Discrete Comput. Geom.}, 33(2):249--274, 2005.
	
\end{thebibliography}

\end{document}